\newtheorem{theorem}{Theorem}[section]
\newtheorem{corollary}[theorem]{Corollary}
\newtheorem{lemma}[theorem]{Lemma}
\newtheorem{proposition}[theorem]{Proposition}
\theoremstyle{definition}
\newtheorem{definition}[theorem]{Definition}
\newtheorem{remark}[theorem]{Remark}
\newtheorem{example}[theorem]{Example}
\newtheorem{question}[theorem]{Question}
\newcommand{\htopol}{h_{\text{\rm top}}}
\newcommand{\sep}{{\rm sep}}
\newcommand{\id}{{\rm id}}
\newcommand{\cT}{{\mathscr T}}
\newcommand{\cR}{{\mathscr R}}
\newcommand{\cP}{{\mathscr P}}
\newcommand{\cZ}{{\mathscr Z}}
\newcommand{\sL}{{\mathscr L}}
\newcommand{\sU}{{\mathscr U}}
\newcommand{\Cb}{\bm{\mathrm{C}}}
\newcommand{\eps}{\varepsilon}
\newcommand{\fA}{{\sf A}}
\newcommand{\fS}{{\sf S}}
\newcommand{\vN}{{\mathscr L}}
\newcommand{\C}{\bm{\mathrm{C}}}
\newcommand{\N}{\bm{\mathrm{N}}}
\newcommand{\Z}{\bm{\mathrm{Z}}}
\newcommand{\mc}[1]{\mathcal{#1}}
\DeclareMathOperator{\Sym}{Sym}
\DeclareMathOperator{\prob}{{\bf P}}
\begin{document}

\title[Dynamical alternating groups]{Dynamical alternating groups, stability, property Gamma, and inner amenability}

\author{David Kerr}
\address{David Kerr,
Mathematisches Institut,
WWU M{\"u}nster, 
Einsteinstr.\ 62, 
48149 M{\"u}nster, Germany}
\email{kerrd@uni-muenster.de}

\author{Robin Tucker-Drob}
\address{Robin Tucker-Drob,
Department of Mathematics,
Texas A\&M University,
College Station, TX 77843-3368, USA}
\email{rtuckerd@math.tamu.edu}

\date{March 8, 2021}

\begin{abstract}
We prove that the alternating group of a topologically free action of a countably infinite group $\Gamma$
on the Cantor set has the property that all of its $\ell^2$-Betti numbers vanish and, in the case that $\Gamma$ is amenable,
is stable in the sense of Jones and Schmidt and has property Gamma
(and in particular is inner amenable).
We show moreover in the realm of amenable $\Gamma$ that
there are many such alternating groups which are simple, finitely generated, and C$^*$-simple.
The device for establishing nonisomorphism among these examples is
a topological version of Austin's result on the invariance of measure entropy under bounded orbit equivalence.
\end{abstract}

\maketitle

\vspace*{-3mm}

\tableofcontents

\vspace*{-3mm}

\section{Introduction}

In the study of von Neumann algebras and their connections to group theory, 
a pivotal role is played by the II$_1$ factors,
i.e., the infinite-dimensional von Neumann algebras having trivial center and a 
(necessarily faithful and unique) normal tracial state.
A II$_1$ factor is said to have {\it property Gamma} if it has an asymptotically central net
of unitaries with trace zero.
This concept was introduced by Murray and von Neumann
in order to show that there exists a II$_1$ factor, in this case the group von Neumann algebra $\sL F_2$ of the
free group on two generators, which is not isomorphic to the hyperfinite II$_1$ factor \cite{MurvNe43}.
While the stronger property of injectivity was later identified as the von Neumann algebra
analogue of amenability for groups,
and indeed as being equivalent to this amenability in the case of group von Neumann algebras,
Murray and von Neumann's argument nevertheless hinted at a connection to amenability
through its use of almost invariant measures and paradoxicality in a way that, as they pointed out,
paralleled Hausdorff's paradoxical decomposition of the sphere.
In fact property Gamma has come, perhaps surprisingly, to play a significant role in the study of
operator-algebraic amenability. It appears as a technical tool in Connes's celebrated work on the classification of
injective factors \cite{Con76}, and its C$^*$-algebraic analogue has very recently
been used to establish the equivalence of finite nuclear and $\cZ$-stability in the
Toms--Winter conjecture \cite{CasEviTikWhiWin18}. The explicit connection to group-theoretic amenability
was made by Effros \cite{Eff75}, who showed that, for an ICC discrete group $G$, if the group von Neumann algebra $\sL G$
(which in this case is a II$_1$ factor)
has property Gamma then $G$ is {\it inner amenable}, meaning that
there exists an atomless finitely additive probability measure on $G\setminus \{ 1_G \}$ which is
invariant under the action of $G$ by conjugation.
The converse of this implication turns out to be false, however, as was shown by Vaes in \cite{Vae12}.

Let us say for brevity that an ICC discrete group $G$ has {\it property Gamma} if $\sL G$ has property Gamma.
When such a group is nonamenable it exhibits the kind of behavior that brings it under the compass
of Popa's deformation/rigidity program.
Indeed Peterson and Sinclair showed in \cite{PetSin12} that if $G$ is countable and nonamenable and has property Gamma then
each of its weakly mixing Gaussian actions, including
its Bernoulli action over a standard atomless base, is $\sU_{\rm fin}$-cocycle superrigid, i.e., every cocycle
taking values in a Polish group that embeds as a closed subgroup of the unitary group of some II$_1$ factor,
and in particular every cocycle into a countable discrete group, is cohomologous
to a homomorphism. This implies that if $G$ has no nontrivial finite normal subgroups
then each of its weakly mixing Gaussian actions is orbit equivalence superrigid, i.e.,
any orbit equivalence with another ergodic p.m.p.\ action of a countable discrete group
implies that the actions are conjugate modulo an isomorphism of the groups.
In \cite{TD14} Ioana and the second author strengthened
Peterson and Sinclair's result by showing that the conclusion holds more generally if property Gamma
is replaced by inner amenability.
In Corollary~D of \cite{ChiSinUdr16} it was shown that inner amenability implies the vanishing of the
first $\ell^2$-Betti number, which is a necessary condition
for $\sU_{\rm fin}$-cocycle superrigidity (Corollary~1.2 of \cite{PetSin12}) and
hence can also be derived from \cite{TD14}.

Inner amenability is also related to Jones and Schmidt's notion of stability for p.m.p.\ equivalence relations \cite{JS87},
which is an anlogue of the McDuff property for II$_1$ factors.
We say that an ergodic p.m.p.\ equivalence relation is {\it JS-stable} if it is isomorphic to its product
with the unique ergodic hyperfinite p.m.p.\ equivalence relation. A countable group
is said to be {\it JS-stable} if it admits a free ergodic p.m.p.\ action whose orbit equivalence
relation is JS-stable. Just as McDuff implies property Gamma in the II$_1$ factor setting,
JS-stability for a group implies inner amenability (Proposition~4.1 of \cite{JS87}).
However, there exist ICC groups with property Gamma which are not JS-stable, as shown by Kida \cite{Kid12}.
Moreover, while JS-stability for an action implies that the crossed product is McDuff,
JS-stability for a group does not imply that the group has property Gamma \cite{Kid15b}.
We refer the reader to \cite{DepVae17} for a more extensive discussion of the relationships
between all of these properties.

In this paper we show that actions of amenable groups on the Cantor set
provide a rich source of groups which have property Gamma and are JS-stable,
and that many of these groups
are simple, finitely generated, and C$^*$-simple (C$^*$-simplicity means that the reduced group C$^*$-algebra
is simple, or equivalently that there are no nontrivial amenable uniformly recurrent subgroups \cite{Ken15},
and is a strengthening of nonamenability).

As mentioned above, inner amenability implies the vanishing of the first $\ell^2$-Betti number.
It is also known that JS-stability implies the vanishing of all $\ell^2$-Betti numbers,
for in this case the group is measure equivalent to a product of the form $H\times \Z$ for some group $H$,
which has the property that all of its $\ell^2$-Betti numbers vanish \cite{CheGro86},
and the vanishing of all $\ell^2$-Betti numbers is an invariant of measure equivalence \cite{Gab02}.
We will begin by proving in Theorem~\ref{T-Betti} that for every action
$\Gamma\curvearrowright X$ of a countably infinite group (amenable or not) on the Cantor set,
every infinite subgroup of the topological full group
containing the alternating group has the property that all of its $\ell^2$-Betti numbers vanish.
We then show in Theorem~\ref{T-JS-stable} that, for an action $\Gamma\curvearrowright X$
of a countable amenable group on the Cantor set, every such subgroup of the topological full group is JS-stable.
To carry this out we use the stability sequence criterion for JS-stability due to Jones and Schmidt
in the free ergodic case \cite{JS87}
and to Kida more generally \cite{Kid15a} along with an invariant random subgroup generalization of a result from \cite{TD14}
that yields a stability sequence from the existence of fiberwise stability sequences
in an equivariant measure disintegration of an action whose orbit equivalence relation is hyperfinite
(Theorem~\ref{thm:IRSstable}).

In Theorem~\ref{T-Gamma} we establish property Gamma for the topological full group, and every subgroup thereof containing
the alternating group $\fA (\Gamma ,X)$,
of a topologically free action $\Gamma\curvearrowright X$
of a countable amenable group on the Cantor set (the ICC condition is automatic in this case by Proposition~\ref{P-ICC}).
As a corollary these subgroups of the topological full group are inner amenable,
and in Section~\ref{S-inner amenability} we provide two direct
proofs of this inner amenability whose techniques may be of use in other contexts.
One of these proofs does not require topological freeness
and yields inner amenability merely assuming that the subgroup is infinite, which is automatic if
$\fA (\Gamma ,X)$ is nontrivial.

Perhaps most interesting is the situation when the action $\Gamma\curvearrowright X$ is minimal and expansive,
as the first of these two conditions implies that the alternating group is simple and the second implies,
under the assumption that $\Gamma$ is finitely generated,
that the alternating group finitely generated \cite{Nek17}.
Under these additional hypotheses it is possible
for the alternating group not to be amenable, as Elek and Monod
constructed in \cite{EM13} a free minimal expansive $\Z^2$-action whose topological full group
contains a copy of $F_2$, and in this case the alternating group coincides
with the commutator subgroup of the topological full group and hence is nonamenable.
We thereby obtain an example of a
simple finitely generated nonamenable group with property Gamma, and in particular an example of
a simple finitely generated nonamenable inner amenable group, which answers a question that was
posed to the second author by Olshanskii.
It has also recently been shown by Sz\H{o}ke that every infinite finitely generated group which is not virtually cyclic
admits a free minimal expansive action on the Cantor set whose topological full group contains $F_2$ \cite{Szo18}.

We push this line of inquiry further by showing that many finitely generated torsion-free amenable groups,
including those that are residually finite and possess a nontorsion element with infinite conjugacy class,
admit an uncountable family of topologically free minimal actions on the Cantor set
whose alternating groups are C$^*$-simple (and in particular nonamenable)
and pairwise nonisomorphic (Theorem~\ref{T-ent nonamenable}).
These actions are constructed so as to have different values of topological entropy,
so that the pairwise nonisomorphism follows by combining the following two facts:
\begin{enumerate}
\item the alternating groups of two minimal actions are isomorphic if and only if the actions are continuously orbit equivalent,

\item for topologically free actions of finitely generated amenable groups which are not virtually cyclic,
topological entropy is an invariant of continuous orbit equivalence.
\end{enumerate}
The first of these follows from a reconstruction theorem of Rubin \cite{Rub89} and an observation of X. Li \cite{Li18}
(see Theorem~\ref{T-alt coe}).
The second is a topological version of a theorem of Austin on measure entropy and bounded orbit equivalence \cite{Aus16}
and follows from Austin's result and the variational principle in the torsion-free case, which is our domain of application
in Theorem~\ref{T-ent nonamenable}. We include in Section~\ref{S-entropy} a demonstration of how (ii) can be directly derived
from the geometric ideas underlying Austin's approach in the measure setting.
The C$^*$-simplicity
is verified by applying a rigid stabilizer criterion of Le Boudec and Matte Bon \cite{LeBMat18} for groups of homeomorphisms
which relies on Kennedy's characterization of C$^*$-simplicity as the absence of nontrivial amenable
uniformly recurrent subgroups \cite{Ken15}. Le Boudec and Matte Bon applied their criterion to show, among other things,
that the full group of a free minimal action of a nonamenable group on the Cantor set is C$^*$-simple.
The novelty in our case is that we are working within the regime of amenable acting groups.
The construction itself relies on the kind of recursive blocking technique that was used in \cite{LinWei00}
to construct minimal $\Z$-actions with prescribed mean dimension.

It should be noted that, for actions of countable amenable groups
on the Cantor set, the topological full group and its alternating subgroup are
often amenable. A celebrated theorem of Juschenko and Monod says that
this is the case for all minimal $\Z$-actions on the Cantor set \cite{JusMon12}.
It is also the case if the action is free, minimal, and equicontinuous (Theorem~1.3 of \cite{CorMed16})
or if the action is free and minimal and the acting group is virtually cyclic \cite{Szo18}.
On the other hand,
the alternating group of the shift action $\Z \curvearrowright \{ 1,\dots ,q \}^{\Z}$ for $q\geq 4$
contains a copy of $F_2$. In fact the strategy of the construction in the proof of
Theorem~\ref{T-nonamenable} involves the embedding of such a shift along some nontorsion direction in the acting group,
which can be done in a way that ensures minimality as long as the group is not virtually cyclic.
Using an idea from \cite{vDo90} as in the construction of Elek and Monod
(see also the discussion in Section~3.7 of \cite{Cor13}),
one then exploits the shift structure to show that the free product $\Z_2 * \Z_2 * \Z_2$,
which contains a copy of $F_2$, embeds into the rigid stabilizer of every nonempty open set
under the action of the alternating group.

We mention finally the Thompson group $V$ as an example of
a group of homeomorphisms of the Cantor set which is C$^*$-simple \cite{LeBMat18}
and not inner amenable \cite{HaaOle17}.
\medskip

\noindent{\it Acknowledgements.}
The first author was partially supported by NSF grant DMS-1500593 and was 
affiliated with Texas A\&M University during the course of the project.
A portion of the work was carried out during his six-month stay in 2017-2018 at the ENS de Lyon,
during which time he held ENS and CNRS visiting professorships.
He thanks Damien Gaboriau and Mikael de la Salle at the ENS for their generous hospitality. The second author was partially supported by NSF grant DMS-1600904.
We thank the referee for helpful suggestions.

\section{Dynamical alternating groups and continuous orbit equivalence}

Throughout the paper group actions on compact spaces will always be understood to be continuous
and the acting groups discrete.
Let $\Gamma$ be a discrete group and $\Gamma\curvearrowright X$ an
action on a compact metrizable space. We will express the action using the concatenation
$\Gamma\times X\ni (s,x)\mapsto sx$, and in the occasional case that we need to notationally distinguish
the action from others by giving it a name $\alpha$ we may also write $(s,x)\mapsto \alpha_s x$.
The action is said to be {\it topologically free} if the set of points in $X$ with trivial 
stabilizer is dense (note that this set is a $G_\delta$ when $\Gamma$ is countable).
Two actions $\Gamma\curvearrowright X$ and $\Gamma\curvearrowright Y$ of $\Gamma$
on compact metrizable spaces are {\it topologically conjugate} if there exists
a homeomorphism $\varphi :X\to Y$ such that $\varphi (sx)=s\varphi (x)$ for all
$s\in \Gamma$ and $x\in X$. Such a homeomorphism is called a {\it topological conjugacy}.

The {\it topological full group} $[[ \Gamma\curvearrowright X ]]$
of the action is the group of all homeomorphisms $T:X\to X$ with the property that
there exists a clopen partition $\{ P_1 , \dots , P_n \}$ of $X$ and $s_1 , \dots , s_n \in \Gamma$ such that $Tx = s_i x$
for every $i=1,\dots ,n$ and $x\in P_i$.
If the action has a name $\alpha$ then we also write $[[\alpha ]]$.
This group is countable since $\Gamma$ is countable
and there are at most countably many clopen partitions of $X$.

Let $d\in\N$.
Consider the collection of all group homomorphisms $\varphi$ from the symmetric group $\fS_d$ on $\{ 1,\dots , d \}$
to $[[ \Gamma\curvearrowright X ]]$ with the property that there are pairwise disjoint clopen sets
$A_1 ,\dots , A_d \subseteq X$ such that for every $\sigma\in\fS_d$ the homeomorphism
$\varphi (\sigma )$ acts as the identity off of $A_1 \sqcup\cdots\sqcup A_d$ and for each
$i=1,\dots ,d$ acts on $A_i$ as some element of $\Gamma$ with image $A_{\sigma (i)}$. Define
$\fS_d (\Gamma ,X)$ to be the subgroup of $[[ \Gamma\curvearrowright X ]]$ generated by the images of
all of these homomorphisms,
and $\fA_d (\Gamma ,X)$ to be the subgroup of $[[ \Gamma\curvearrowright X ]]$ generated by the images of
the alternating group $\fA_d \subseteq \fS_d$ under all of these homomorphisms.
One can also generate these groups by considering more generally the embeddings
of $\fS_d$ as above but with $\varphi (\sigma )$ acting on $A_i$ as an element of $[[ \Gamma\curvearrowright X ]]$
with image $A_{\sigma (i)}$, as it is readily verified that the image of such an embedding lies in a product of embeddings
of $S_d$ of the more restrictive type using single elements of $\Gamma$.

\begin{definition}[\cite{Nek17}]\label{D-alternating}
The {\it symmetric group} $\fS (\Gamma ,X)$ and {\it alternating group} $\fA (\Gamma ,X)$ of the action
$\Gamma\curvearrowright X$ are defined to be $\fS_2 (\Gamma ,X)$ and $\fA_3 (\Gamma ,X)$, respectively.
If the action has a name $\alpha$ then we also write $\fS (\alpha )$ and $\fA (\alpha )$.
\end{definition}

If the action has no finite orbits then $\fS (\Gamma ,X) = \fS_d (\Gamma ,X)$ for every $d\geq 2$
and $\fA (\Gamma ,X) = \fA_d (\Gamma ,X)$ for every $d\geq 3$ (Corollary~3.7 of \cite{Nek17}).

The alternating group $\fA (\Gamma ,X)$ is contained in the commutator subgroup $[[ \Gamma\curvearrowright X ]]'$
of $[[ \Gamma\curvearrowright X ]]$ and is equal to it when the action is almost finite, i.e.,
when $X$ can be partitioned into clopen towers whose shapes are approximately left invariant to within an
arbitrarily prescribed tolerance \cite{Mat12,Nek17} (see the discussion in Section~4 of \cite{Nek17}).

Recall that the action $\Gamma\curvearrowright X$ is {\it expansive} if, fixing a compatible metric $d$ on $X$,
there is an $\eps > 0$ such that for all distinct $x,y\in X$ there is an $s\in\Gamma$ for which $d(sx,sy) \geq\eps$.
When $X$ is zero-dimensional, expansivity
is equivalent to the action being topologically conjugate to a right subshift action over a finite alphabet,
i.e., the restriction of the right shift action $\Gamma\curvearrowright \{ 1,\dots , n\}^\Gamma$ for some $n\in\N$,
given by $(sx)_t = x_{ts}$, to a closed $\Gamma$-invariant subset.
The following facts were established by Nekrashevych in Theorem~4.1, Proposition~5.5,
and Theorem~5.6 of \cite{Nek17}.

\begin{theorem}[\cite{Nek17}]\leavevmode\label{T-simple fg}
\begin{enumerate}
\item If the action $\Gamma\curvearrowright X$ is minimal then $\fA (\Gamma , X)$ is simple.

\item If $\Gamma$ is finitely generated and the action $\Gamma\curvearrowright X$ is expansive
and has no orbits of cardinality less than $5$
then $\fA (\Gamma ,X)$ is finitely generated.
\end{enumerate}
\end{theorem}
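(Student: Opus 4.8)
These are results of Nekrashevych, and the plan is to reconstruct the two arguments; both rest on a single preliminary reduction that I would set up first. For pairwise disjoint clopen sets $A,sA,tA$ write $\sigma_{A,s,t}$ for the \emph{elementary $3$-cycle} carrying them cyclically onto one another via $s$, $ts^{-1}$, $t^{-1}$ and fixing everything else; this lies in $\fA_3(\Gamma,X)=\fA(\Gamma,X)$. I claim $\fA(\Gamma,X)$ is generated by such elements with $\diam(A)$ as small as we please: any generator is a $3$-cycle $c$ on a clopen triple $(A_1,A_2,A_3)$ whose edge maps are only locally given by group elements, and cutting $A_1$ into clopen pieces $A_1=\bigsqcup_j B_j$ fine enough that each edge becomes a single group element on each piece yields $c=\prod_j\sigma_{B_j,s_j,t_j}$ with commuting factors. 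Padding with fixed clopen sets (together with the ``no orbits of cardinality $<5$'' hypothesis, automatic under minimality since orbits are then dense hence infinite) also shows $\fA_3(\Gamma,X)=\fA_5(\Gamma,X)$ --- and $=\fA_d(\Gamma,X)$ for all $d$ when all orbits are infinite, which is Corollary~3.7 of \cite{Nek17}. We assume throughout that $X$ is the Cantor set, the only case in which $\fA(\Gamma,X)$ is nondegenerate.

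For part (i), let $N\trianglelefteq\fA(\Gamma,X)$ be nontrivial and fix $1\neq g\in N$. Choose $x_0$ with $gx_0\neq x_0$ and a clopen $D\ni x_0$ with $D\cap gD=\emptyset$. Minimality makes the orbit of $x_0$ dense, so it returns to $D$: there are $s,t\in\Gamma$ with $x_0,sx_0,tx_0$ distinct points of $D$, and for a small enough clopen $A\ni x_0$ the elementary $3$-cycle $\rho=\sigma_{A,s,t}$ has support inside $D$ and $gA,gsA,gtA$ are group translates of $A$. Then $[g,\rho]=(g\rho g^{-1})\rho^{-1}\in N$ is a product of two commuting $3$-cycles, one supported in $D$ and one in $gD$, hence an even permutation of six pairwise disjoint small clopen sets; it is nontrivial since $g\rho g^{-1}\neq\rho$. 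Those six sets, being group translates of $A$, support a copy of $\fA_6$ inside $\fA(\Gamma,X)$ containing $\rho$ and $g\rho g^{-1}$, hence $[g,\rho]$; as $\fA_6$ is simple, $\rho$ lies in the normal closure of $[g,\rho]$ within this copy, a fortiori within $\fA(\Gamma,X)$. Thus $N$ contains an elementary $3$-cycle on an arbitrarily small clopen set. It remains to conjugate: any two elementary $3$-cycles on small clopen sets are conjugate in $\fA(\Gamma,X)$, and here minimality is decisive --- each nonempty clopen set has finitely many $\Gamma$-translates covering $X$, which supplies the equidecomposability needed to build a topological full group element taking one small clopen triple onto another and intertwining the cyclic edge maps, after which a parity correction by a disjoint elementary $3$-cycle moves the conjugator into $\fA(\Gamma,X)$. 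Combining the two facts, $N$ contains every generator, so $N=\fA(\Gamma,X)$.

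For part (ii), by expansivity we may take $\Gamma\curvearrowright X$ to be a subshift $X\subseteq\{1,\dots,n\}^\Gamma$, so that the cylinder sets form a basis of clopen sets permuted self-similarly by $\Gamma$, each $s\cdot[p]$ being again a cylinder over a translate of the window of $[p]$. Fix a finite generating set $S$ of $\Gamma$ and take as the candidate finite generating set the finitely many $\sigma_{[p],s,t}$ with $s,t\in S\cup\{s^{-1}:s\in S\}\cup\{e\}$ and $[p]$ ranging over cylinders with a fixed window. Starting from the preliminary reduction, an arbitrary generator $\sigma_{A,s,t}$ is first expressed, by a clopen partition of $A$ into cylinders, as a product of $\sigma_{[p],s,t}$; each of these is then reduced to a product of conjugates of candidates by induction on a combined measure of the cylinder's window size and the word-lengths of $s,t$. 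To shorten an edge $s=s_1s_2$ one inserts the intermediate clopen set $s_2A$ and rewrites the $3$-cycle through elementary $3$-cycles with shorter edges via an $\fA_5$-identity, which consumes five pairwise disjoint local translates --- precisely where the hypothesis of no orbits of cardinality $<5$ is used. To shrink a cylinder's window one uses self-similarity (a deep cylinder is a finite intersection of $\Gamma$-translates of shallow ones) together with the fact that commutators of $3$-cycles localize to intersections of supports. Both reductions decrease the complexity measure and terminate, so the listed elementary $3$-cycles generate $\fA(\Gamma,X)$.

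The genuinely fiddly part --- and the main obstacle --- is (ii): writing down the precise $\fA_5$- and commutator-identities for the inductive step, tracking the disjointness of every clopen set that appears, and verifying that orbits of cardinality $\geq5$ furnish exactly the local room those identities require. Part (i) is the more classical Higman--Epstein/Matui-type scheme (small support, a commutator trick, spreading by a transitivity-like property); its only delicate ingredients are the equidecomposability of small clopen sets extracted from minimality and the parity bookkeeping that keeps conjugators inside $\fA(\Gamma,X)$ rather than merely inside $\fS(\Gamma,X)$.
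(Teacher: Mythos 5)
A preliminary point: the paper does not prove this theorem. It is imported wholesale from Nekrashevych (Theorem~4.1, Proposition~5.5 and Theorem~5.6 of \cite{Nek17}), so there is no in-paper argument to compare yours against; what follows measures your reconstruction against what such a proof actually requires. Your part (i) is the right scheme and is essentially Nekrashevych's: reduce to elementary $3$-cycles of small support, use $[g,\rho]=(g\rho g^{-1})\rho^{-1}$ to place a nontrivial product of two disjoint $3$-cycles on six local translates of a small clopen set inside $N$, invoke simplicity of $\fA_6$ (legitimate, since minimality kills finite orbits and hence $\fA_6(\Gamma,X)=\fA(\Gamma,X)$) to recover $\rho$, then spread by conjugation using minimality. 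The one step I would not let pass as stated is the ``parity correction by a disjoint elementary $3$-cycle'': multiplying by an even element cannot change a coset of $\fA(\Gamma,X)$ in $\fS(\Gamma,X)$, and that quotient need not be $\Z/2\Z$ anyway, so an a posteriori correction of a conjugator built in $[[\Gamma\curvearrowright X]]$ is not available. Since $N$ is only normal in $\fA(\Gamma,X)$, the conjugator must be manufactured from the outset as a product of elementary $3$-cycles (minimality does permit this, but it is a lemma, not a fix-up).

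The genuine gap is in part (ii), and it is exactly where you say the ``fiddly part'' is: the two reductions you assert in a sentence each are the entire theorem. For the edge-shortening step, inserting the intermediate set $s_2A$ to split $\sigma_{A,s_1s_2,t}$ requires $s_2A$ to be disjoint from $A$, $s_1s_2A$ and $tA$, which can fail outright (e.g.\ $s_2A$ may meet $A$), and the claimed ``$\fA_5$-identity'' that repairs this using five disjoint local translates is never written down; this is precisely the place where the hypothesis on orbits of cardinality $\geq 5$ must be converted into a concrete disjointness statement, and it is not automatic. For the window-shrinking step, a commutator of two elementary $3$-cycles with overlapping supports is supported near the intersection of the supports, but it is not in general an elementary $3$-cycle on that intersection with a prescribed pair of edges; producing the specific generators $\sigma_{[p],s,t}$ for every deep cylinder $[p]$ from a fixed finite list is the content of Proposition~5.5 of \cite{Nek17} and requires explicit identities that your outline does not supply. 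Moreover your complexity measure (window size plus word length) is never shown to strictly decrease under both moves simultaneously, so termination of the induction is also unestablished. As written, part (ii) is a statement of strategy rather than a proof.
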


Next we define continuous orbit equivalence and record its relationship to the symmetric and alternating groups.

\begin{definition}\label{D-coe}
Two actions $\Gamma\stackrel{\alpha}{\curvearrowright} X$ and $\Lambda\stackrel{\beta}{\curvearrowright} Y$
on compact metrizable spaces are {\it continuously orbit equivalent}
if there are a homeomorphism $\Phi : X\to Y$ and
continuous maps $\kappa : \Gamma\times X\to \Lambda$ and $\lambda : \Lambda\times Y\to \Gamma$ such that
\begin{align*}
\Phi (\alpha_t x) &= \beta_{\kappa (t,x)}\Phi (x) , \\
\Phi^{-1} (\beta_h y) &= \alpha_{\lambda (h,y)} \Phi^{-1} (y)
\end{align*}
for all $t\in \Gamma$, $x\in X$, $h\in \Lambda$, and $y\in Y$.
\end{definition}

It is clear that topological conjugacy
implies continuous orbit equivalence.
If the action $\beta$ is topologically free, then the map $\kappa$
is uniquely determined by the first line of the above display
since $\Phi$ is continuous, and one can moreover verify
the cocycle identity
\begin{align*}
\kappa (st , x) = \kappa (s ,\alpha_t x) \kappa (t , x)
\end{align*}
for all $s,t \in \Gamma$ and $x\in X$.
If $\alpha$ and $\beta$ are both topologically free, then we have
\begin{align*}
\lambda (\kappa (t,x),\Phi (x)) = t
\end{align*}
for all $t\in \Gamma$ and $x\in X$, and $\lambda$ is uniquely determined by this identity.

As pointed out in Section~3.3 of \cite{Nek17}, the equivalences
(i)$\Leftrightarrow$(ii)$\Leftrightarrow$(iii)$\Leftrightarrow$(iv)
in the following theorem are a consequence of a reconstruction theorem of Rubin \cite{Rub89}.
The equivalence (i)$\Leftrightarrow$(ii) was also established by Matui in Theorem~3.10 of \cite{Mat15}.
The equivalence (i)$\Leftrightarrow$(v) was observed by X. Li in Theorem~1.2 of \cite{Li18}.

\begin{theorem}\label{T-alt coe}
Let $\Gamma \curvearrowright X$ and $\Lambda \curvearrowright Y$ be topologically free
minimal actions of countable groups on compact metrizable spaces.
Then the following are equivalent:
\begin{enumerate}
\item $X \rtimes \Gamma$ and $Y \rtimes \Lambda$ are isomorphic as topological groupoids,

\item the groups $[[\Gamma \curvearrowright X ]]$ and $[[\Lambda \curvearrowright Y ]]$ are isomorphic,

\item the groups $\fA (\Gamma , X )$ and $\fA (\Lambda , Y)$ are isomorphic,

\item the groups $\fS (\Gamma , X)$ and $\fS (\Lambda , Y)$ are isomorphic,

\item the actions $\Gamma \curvearrowright X$ and $\Lambda \curvearrowright Y$ are continuously
orbit equivalent.
\end{enumerate}

\end{theorem}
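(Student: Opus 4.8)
The strategy is to treat the chain of implications in the cycle (i)$\Rightarrow$(v)$\Rightarrow$(i), together with the groupoid-theoretic reformulations (i)$\Leftrightarrow$(ii)$\Leftrightarrow$(iii)$\Leftrightarrow$(iv), drawing on the cited reconstruction and groupoid results rather than reproving them from scratch. Concretely, the plan is to first observe that for a minimal action $\Gamma\curvearrowright X$, the transformation groupoid $X\rtimes\Gamma$ is essentially principal (topologically principal) on its reduction to the dense $G_\delta$ of points with trivial stabilizer, and that each of the groups $[[\Gamma\curvearrowright X]]$, $\fS(\Gamma,X)$, $\fA(\Gamma,X)$ is a \emph{full group} of bisections in this groupoid whose germs generate the whole groupoid. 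Then the equivalences (i)$\Leftrightarrow$(ii) and (i)$\Leftrightarrow$(iv) follow from Matui's reconstruction machinery (Theorem~3.10 of \cite{Mat15}) and Rubin's theorem \cite{Rub89}, exactly as pointed out in Section~3.3 of \cite{Nek17}; the point is that an abstract group isomorphism between any two of these full groups must be spatially implemented by a groupoid isomorphism, because the group "sees" the topology of the unit space through its lattice of supports of elements (this is where Rubin's reconstruction theorem does the heavy lifting). For (i)$\Leftrightarrow$(iii), I would invoke the fact that $\fA(\Gamma,X)$ is a simple group (Theorem~\ref{T-simple fg}(i)) that still acts with "enough" support on $X$ — Nekrashevych verifies in \cite{Nek17} that the alternating group determines the groupoid, again via Rubin-type reconstruction — so that a group isomorphism $\fA(\Gamma_1,X_1)\xrightarrow{\sim}\fA(\Gamma_2,X_2)$ is spatially implemented.

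The genuinely dynamical step is the equivalence (i)$\Leftrightarrow$(v), which is Li's observation (Theorem~1.2 of \cite{Li18}). For (v)$\Rightarrow$(i): given a continuous orbit equivalence $(\Phi,\kappa,\lambda)$ as in Definition~\ref{D-coe}, I would define a map $X_1\rtimes\Gamma_1\to X_2\rtimes\Gamma_2$ on the transformation groupoid by $(s,x)\mapsto(\kappa(s,x),\Phi(x))$. The cocycle identity $\kappa(rs,x)=\kappa(r,\alpha_s x)\kappa(s,x)$ recorded in the excerpt is precisely what makes this a groupoid homomorphism, the companion map built from $\lambda$ provides the inverse, and continuity of $\kappa$, $\lambda$, and $\Phi$ (together with $\Phi$ being a homeomorphism) makes it a homeomorphism of groupoids covering $\Phi$ on the unit space. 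Minimality is not even needed for this direction. For (i)$\Rightarrow$(v): a topological groupoid isomorphism $X_1\rtimes\Gamma_1\cong X_2\rtimes\Gamma_2$ restricts to a homeomorphism $\Phi$ of unit spaces identifying orbits, and one extracts the cocycles $\kappa,\lambda$ by choosing, for each $s\in\Gamma_1$, the global bisection $\{(s,x):x\in X_1\}$ and pushing it forward; its image is a bisection of $X_2\rtimes\Gamma_2$, which by compactness of $X_1$ and continuity can be covered by finitely many clopen pieces on each of which the $\Gamma_2$-label is constant, yielding a continuous $\kappa(s,\cdot)$. Topological freeness is what guarantees the labels, hence $\kappa$ and $\lambda$, are well-defined (uniqueness of germs), which is why the hypothesis appears in Definition~\ref{D-coe}'s uniqueness clauses and, implicitly, throughout.

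The main obstacle I anticipate is not any single implication but rather making precise the passage "abstract group isomorphism $\Rightarrow$ spatial implementation" for the three different full groups simultaneously, since Rubin's theorem has technical hypotheses (local movability / the group acting as a "locally dense" group of homeomorphisms on a locally compact Hausdorff space without isolated points) that must be checked separately for $[[\Gamma\curvearrowright X]]$, $\fS(\Gamma,X)$, and $\fA(\Gamma,X)$. Minimality of the action is exactly the input that supplies the needed non-degeneracy (no isolated points in the relevant unit space, and the group moves points around densely), and the alternating group is the delicate case because it is the smallest of the three — one must confirm it still satisfies Rubin's movability hypotheses, which is the content of Nekrashevych's analysis in Section~3.3 of \cite{Nek17}. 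Since the excerpt explicitly permits citing these earlier results, in the actual write-up I would state the reduction to Rubin's and Matui's theorems cleanly, verify the movability/essential-principality hypotheses in one lemma covering all three groups, and then spend the bulk of the argument on the self-contained groupoid-versus-cocycle translation for (i)$\Leftrightarrow$(v).
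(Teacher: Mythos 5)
Your proposal follows essentially the same route as the paper, which gives no self-contained argument but derives (i)$\Leftrightarrow$(iii)$\Leftrightarrow$(iv) from Rubin's reconstruction theorem as explained in Section~3.3 of \cite{Nek17}, (i)$\Leftrightarrow$(ii) from Theorem~3.10 of \cite{Mat15}, and (i)$\Leftrightarrow$(v) from Theorem~1.2 of \cite{Li18}. Your sketches of the groupoid--cocycle translation and of the Rubin-hypothesis verification are consistent with those sources, so there is nothing further to reconcile.
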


\section{$\ell^2$-Betti numbers}\label{S-Betti}

A sequence $(c_n)_{n\in \N}$ of elements of a group $H$ is said to be {\it asymptotically central} in $H$ if each $h\in H$ commutes with all but finitely many of the $c_n$, and {\it nontrivial} if $\{ c_n \, : \, n\in \N \}$ is infinite.
The existence of a nontrivial asymptotically central sequence in $H$ is equivalent to the centralizer $C_H(F)$ of each finite set $F\subseteq H$ being infinite. If there exist two asymptotically central sequences $(c_n)_{n\in \N}$ and $(d_n)_{n\in \N}$ in $H$ which are noncommuting, i.e., which satisfy $c_nd_n\neq d_nc_n$ for all $n\in \N$,
then it is easy to see that both of these sequences must be nontrivial. The existence of two noncommuting asymptotically central sequences in $H$ is equivalent to $C_H(F)$ being non-Abelian for every finite subset $F$ of $H$.

Now let $\Gamma \curvearrowright X$ be an action of a countable group on the Cantor set.
Let $G$ be any subgroup of $[[ \Gamma \curvearrowright X ]]$ containing $\fA (\Gamma ,X)$.
For a finite set $F\subseteq X$ we write $G_{(F)}$ for the set of all elements in $G$
which fix a neighborhood of every $x\in F$. When $F$ is a singleton $\{ x \}$ we simply write $G_{(x)}$.

\begin{lemma}\label{lem:IAfullgroup}
Let $\Gamma \curvearrowright X$ be an action of a countable group on the Cantor set.
\begin{enumerate}
\item If there is a uniform finite bound on the size of $\Gamma$-orbits, then the group $[[\Gamma \curvearrowright X ]]$ is locally finite.
\item Let $X_0$ denote the set of points $x\in X$ with the property that for every open neighborhood $U$ of $x$ and for every $n\in \N$ there exists some $y\in X$ such that $|\Gamma y \cap U|\geq n$. The set $X_0$ is closed and $\Gamma$-invariant, and $X_0 = \emptyset$ if and only if there is a uniform finite bound on the size of $\Gamma$-orbits.
\item Suppose that $X_0\neq \emptyset$. Let $x\in X_0$ and let $F$ be a finite subset of $X$ containing $x$. Let $G$ be any subgroup of $[[ \Gamma \curvearrowright X ]]$ containing $\fA (\Gamma ,X)$. Then $G_{(x)}$ contains two noncommuting asymptotically central sequences, and these sequences can moreover be taken to lie in $G_{(F)}\cap \fA (\Gamma , X )$.
\end{enumerate}
\end{lemma}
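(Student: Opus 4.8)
The plan is to prove part (i) and the first two assertions of part (ii) by standard arguments, and then concentrate on the substantive content, which is part (iii).

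For (i), if $\Gamma$-orbits are uniformly bounded by some $N$, then any $T\in[[\Gamma\curvearrowright X]]$ permutes each orbit within itself, so $[[\Gamma\curvearrowright X]]$ embeds into a product of symmetric groups $\fS_N$ over the (possibly uncountably many) orbits; more precisely, given a finite subset $\{T_1,\dots,T_k\}$, refine the associated clopen partitions to a single clopen partition $\{B_1,\dots,B_m\}$ on each piece of which each $T_j$ is given by a single group element, and observe that the subgroup generated by $T_1,\dots,T_k$ acts on the finite $\Gamma$-orbit of any point through a fixed finite symmetric group while respecting the partition, so this subgroup injects into a finite product of copies of $\fS_N$ indexed by the (finitely many) orbit-types visible in the partition. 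Hence it is finite. For the first two assertions of (ii): $X_0$ is $\Gamma$-invariant because the defining condition is phrased in terms of arbitrarily large finite pieces of $\Gamma$-orbits meeting every neighborhood, and applying a homeomorphism $s$ translates neighborhoods of $x$ to neighborhoods of $sx$ while $s$ sends $\Gamma y$ to $\Gamma(sy)$; it is closed because its complement is manifestly open (if $x\notin X_0$, witnessed by a neighborhood $U$ and a bound $n$, then every point of $U$ also fails the condition via the same $U$). Finally $X_0=\emptyset$ iff orbits are uniformly bounded: if orbits are uniformly bounded then clearly no point can be in $X_0$; conversely, if there is no uniform bound, then there are points whose orbits are arbitrarily large, and by compactness a subsequence of such points accumulates at some $x$, every neighborhood of which then meets arbitrarily large orbits, so $x\in X_0$.

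The heart of the matter is (iii). Fix $x\in X_0$ and a finite set $F\ni x$. Using that $X$ is a Cantor set and $X_0\neq\emptyset$, I will construct, for each $n$, a pair of commuting clopen ``towers'' near $x$ on which I can install alternating permutations that are supported arbitrarily close to $x$ (hence eventually commute with any fixed element of $G$, since every $g\in G$ is locally given by elements of $\Gamma$ and so only ``sees'' finitely many of the shrinking neighborhoods) yet are genuinely noncommuting with each other. Concretely: since $x\in X_0$, for every clopen neighborhood $U$ of $x$ there is a point $y$ with $|\Gamma y\cap U|$ as large as we like; picking five points $z_1,\dots,z_5$ in the same $\Gamma$-orbit lying in $U$, separated by disjoint clopen sets $A_1,\dots,A_5\subseteq U$ with $s_i z_1 = z_{i}$ realized by $\Gamma$-elements that implement homeomorphisms $A_1\to A_i$ after shrinking, gives a copy of $\fA_5$ (or at least $\fA_3$) inside $\fA(\Gamma,X)$ with all supports inside $U$. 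Taking two such configurations inside a common $U$, built on disjoint five-element pieces of orbits (or the same piece but using two 3-cycles in $\fA_5$ that do not commute), yields two elements $c,d\in\fA(\Gamma,X)$ with supports in $U\subseteq X\setminus F$ (so they lie in $G_{(F)}$) and with $cd\neq dc$. Doing this for a decreasing sequence of clopen neighborhoods $U_n\downarrow \{x\}$ produces sequences $(c_n),(d_n)$ in $G_{(F)}\cap\fA(\Gamma,X)$ with $c_n d_n\neq d_n c_n$ for all $n$.

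It remains to check asymptotic centrality in $G_{(x)}$ — equivalently in $G$, since all $c_n,d_n$ fix $F$ and in particular $x$. Let $g\in G\subseteq[[\Gamma\curvearrowright X]]$. Then $g$ is determined by a clopen partition $\{P_1,\dots,P_r\}$ with $g|_{P_j}=t_j\in\Gamma$. I must arrange that for $n$ large, $\supp(c_n)$ and $\supp(d_n)$, together with their $g$-images, are small enough that $g$ acts ``coherently'' on a neighborhood of $\supp(c_n)$, forcing $g c_n g^{-1}=c_n$. The key point is that the supports $U_n$ shrink to $\{x\}$, so for large $n$ the set $U_n$ lies inside the single partition piece $P_{j_0}$ containing $x$, and moreover — shrinking further — $gU_n = t_{j_0}U_n$ also lies in a single piece $P_{j_1}$, and in fact $g$ restricted to a neighborhood of $U_n$ is the single homeomorphism $x'\mapsto t_{j_0}x'$. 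Since each $c_n$ is supported in $U_n$ and built from $\Gamma$-elements moving clopen subsets of $U_n$ around, conjugating by the single group element $t_{j_0}$ just transports the whole configuration by $t_{j_0}$; but this transported configuration must again be an element of $\fA(\Gamma,X)$ supported in $t_{j_0}U_n$, and when $t_{j_0}U_n$ is disjoint from $U_n$ the conjugate $g c_n g^{-1}$ is supported off $U_n$ — this is not yet $c_n$. The correct resolution, and the step I expect to be the main obstacle, is to choose the $U_n$ more carefully so that $t_{j_0}$ \emph{fixes} a neighborhood of $x$ in the relevant sense: either $g\in G_{(x)}$ already (the case we ultimately need, since we only claim asymptotic centrality in $G_{(x)}$!), in which case $g$ fixes a whole clopen neighborhood $V$ of $x$ pointwise, and then for $n$ large with $U_n\subseteq V$ we get $g c_n g^{-1}=c_n$ and $g d_n g^{-1}=d_n$ outright. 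So in fact one should read the claim as asserting asymptotic centrality of the sequences \emph{within the group $G_{(x)}$}, and the argument is then immediate once the $c_n,d_n$ are placed in shrinking clopen neighborhoods of $x$: any $g\in G_{(x)}$ fixes some clopen neighborhood of $x$ pointwise, hence commutes with all but finitely many $c_n$ and all but finitely many $d_n$. Thus the real work is entirely in the geometric construction of the noncommuting pairs inside arbitrarily small clopen neighborhoods of $x$ using the defining property of $X_0$, and in verifying $c_n,d_n\in\fA(\Gamma,X)$ via the homomorphisms from $\fS_d$ described before Definition~\ref{D-alternating}; the asymptotic centrality is then a formality.
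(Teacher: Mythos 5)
Your treatment of part (iii), which is the substantive part, is essentially the paper's: shrinking clopen neighborhoods of $x$ avoiding $F\setminus\{x\}$, noncommuting elements of $\fA(\Gamma,X)$ supported there (the paper uses four points of a single orbit in the annulus $U_n\setminus U_{n+1}$), and the observation that asymptotic centrality is only asserted in $G_{(x)}$, where it is immediate because each $g\in G_{(x)}$ fixes a clopen neighborhood of $x$ pointwise. That last observation is exactly the right reading of the statement. Two slips in (iii) need fixing, one of them real: your primary suggestion of building the two configurations ``on disjoint five-element pieces of orbits'' produces elements $c,d$ with disjoint supports, which therefore \emph{commute}; only your parenthetical alternative (two overlapping noncommuting $3$-cycles on the same orbit piece) works, and that is what the paper does. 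Also ``supports in $U\subseteq X\setminus F$'' is impossible for a neighborhood $U$ of $x\in F$; what you need, and can arrange, is that the supporting clopen sets $A_i$ avoid $F$ (and $x$ itself), e.g.\ by placing them in $U_n\setminus U_{n+1}$.

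The genuine gap is in part (ii). You argue that if orbits are not uniformly bounded, then a limit $x$ of points $y_k$ with $|\Gamma y_k|\to\infty$ lies in $X_0$ because every neighborhood of $x$ ``meets arbitrarily large orbits.'' But membership in $X_0$ requires neighborhoods $U$ with $|\Gamma y\cap U|\geq n$ for a \emph{single} orbit $\Gamma y$, not merely $U\cap\Gamma y\neq\emptyset$ for some large orbit: the large orbits could each put only one point near $x$, so your chosen $x$ need not lie in $X_0$. The correct argument is the contrapositive by compactness, as in the paper: if $X_0=\emptyset$, cover $X$ by finitely many open sets $V_0,\dots,V_{m-1}$ with $|\Gamma y\cap V_i|<n$ for all $y$ and $i$, whence every orbit has at most $mn$ points. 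For part (i), your reduction to a finite product of copies of $\fS_N$ indexed by ``orbit types'' can be made rigorous (the type must record the labelling by the refined partition \emph{together with} the permutations of the orbit induced by the finitely many group elements $s_{j,i}$ appearing, of which there are boundedly many possibilities), but the paper's argument is cleaner: a finitely generated subgroup $H$ has only finitely many subgroups of index at most $m$, their intersection is a finite-index subgroup acting trivially on $X$, hence trivial.
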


\begin{proof}
Suppose that every $\Gamma$-orbit has size at most $m$. If $H$ is a finitely generated subgroup of $[[\Gamma \curvearrowright X ]]$, then the intersection $N$ of all subgroups of $H$ which have index at most $m$ in $H$ is a finite index normal subgroup of $H$ which acts trivially on $X$, so $N$ is the trivial group and hence $H$ is finite. This proves (i). For (ii), it is clear that $X_0$ is closed and $\Gamma$-invariant, and that if $X_0\neq \emptyset$ then there cannot be a uniform finite bound on the size of $\Gamma$-orbits. Conversely, if $X_0=\emptyset$ then by compactness we can find some $n>0$ and some cover of $X$ by finitely many open sets $V_0,\dots , V_{m-1}$ with the property that $|\Gamma y \cap V_i|<n$ for all $i<m$ and $y\in Y$, and hence every $\Gamma$-orbit has cardinality at most $mn$. This proves (ii). For (iii), since $x\in X_0$ we may find a properly decreasing sequence $U_0\supseteq U_1\supseteq \cdots$ of clopen neighborhoods of $x$ disjoint from $F\setminus \{ x \}$ and with $\bigcap _n U_n = \{ x \}$ such that for each $n\in \N$ the set $U_n\setminus U_{n+1}$ contains at least four elements of some $\Gamma$-orbit. The subgroup $H_n$ of $G_{(F)}\cap \fA (\Gamma , X )$ consisting of all $g\in G$ whose support is contained in $U_n\setminus U_{n+1}$ is then non-Abelian, so we fix any two noncommuting elements, $c_n$ and $d_n$, of this subgroup. Then $(c_n)_{n\in \N}$ and $(d_n)_{n\in \N}$ are noncommuting asymptotically central sequences for $G_{(x)}$.
\end{proof}

\begin{theorem}\label{T-Betti}
Let $\Gamma \curvearrowright X$ be an action of a countable group $\Gamma$ on the Cantor set and suppose that $G$ is an infinite subgroup of $[[\Gamma \curvearrowright X ]]$ containing $\fA (\Gamma ,X )$. Then all $\ell^2$-Betti numbers of $G$ vanish.
\end{theorem}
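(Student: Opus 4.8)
The plan is to reduce the vanishing of all $\ell^2$-Betti numbers of $G$ to a structural property of $G$ that is already available from Lemma~\ref{lem:IAfullgroup}, namely the presence of two noncommuting asymptotically central sequences inside the point-stabilizers. The key input I would invoke is the theorem of Peterson and Thom (building on work of Chifan--Sinclair--Udrea and Lück): if a countable group $H$ contains a \emph{wq-normal} (or even just normal) subgroup $N$ such that $N$ itself has a nontrivial asymptotically central sequence and the centralizer of $N$ in $H$ is infinite, then $\beta_n^{(2)}(H)=0$ for all $n$. More precisely, the cleanest statement to use is that if $H$ has an infinite normal subgroup $N$ with $\beta_1^{(2)}(N)=0$ and the centralizer $C_H(N)$ is infinite, then all $\ell^2$-Betti numbers of $H$ vanish; and a chunk of amenability/inner amenability type arguments already in the literature give this once one has a pair of commuting-into-each-other almost central nets. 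So the engine of the proof is external; the work is in producing the right subgroup of $G$ to feed into it.

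First I would dispose of the trivial case. If there is a uniform finite bound on the size of $\Gamma$-orbits then by Lemma~\ref{lem:IAfullgroup}(i) the whole group $[[\Gamma\curvearrowright X]]$ is locally finite, hence so is $G$; but $G$ is assumed infinite, so $G$ is an infinite locally finite group, and such groups are amenable with a nontrivial asymptotically central (indeed central-after-passing-to-subgroups) structure — in any case infinite locally finite groups have all $\ell^2$-Betti numbers zero because they are amenable and ICC-or-not they are a directed union of finite groups, so $\beta_n^{(2)}=0$ for all $n$ by continuity of $\ell^2$-Betti numbers under directed unions with zero first term, or simply because amenable groups with infinite center-like behavior have vanishing $\ell^2$-Betti numbers. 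So from now on assume $X_0\neq\emptyset$ as in Lemma~\ref{lem:IAfullgroup}(ii), and fix a point $x\in X_0$.

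Now comes the main step. Set $N = G_{(x)}$, the subgroup of $G$ fixing a neighborhood of $x$. By Lemma~\ref{lem:IAfullgroup}(iii) (applied with $F=\{x\}$), $N$ contains two noncommuting asymptotically central sequences $(c_n)$ and $(d_n)$, which moreover may be chosen in $\fA(\Gamma,X)$ with supports shrinking down to $x$. I want to record two facts: (a) $N$ is large inside $G$ — in particular $N$ itself has $\beta_1^{(2)}(N)=0$, which follows by the Chifan--Sinclair--Udrea criterion applied \emph{internally} to $N$ since $N$ has two noncommuting asymptotically central sequences and hence is inner amenable, and inner amenability forces $\beta_1^{(2)}=0$ (Corollary~D of \cite{ChiSinUdr16}); and (b) the centralizer $C_G(N)$, or at least $C_G$ of a suitable finite generating piece, interacts with $N$ so as to let one bootstrap from $\beta_1^{(2)}$ to all $\beta_n^{(2)}$. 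The cleanest route for (b): one shows that $G$ contains, for every finite $F\subseteq G$, a non-Abelian subgroup centralizing $F$ — this is exactly the ``$C_H(F)$ non-Abelian for all finite $F$'' condition, which Lemma~\ref{lem:IAfullgroup}(iii) delivers because the noncommuting asymptotically central sequences can be placed inside $G_{(F')}$ for any enlarged finite set $F'$ of points. This condition is precisely the hypothesis in the Peterson--Thom / Chifan--Sinclair--Udrea vanishing theorem for \emph{all} $\ell^2$-Betti numbers (not just the first), via the Lück-type dimension-shifting argument: an asymptotically central sequence gives a central element in an ultrapower, the non-Abelianity gives a free-ish pair of such elements, and one runs the Künneth/comparison argument in $\ell^2$-cohomology against $\ZZ\times\ZZ$ or a lamplighter to kill every degree.

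The main obstacle, as I see it, is (b): upgrading from "$\beta_1^{(2)}(G)=0$", which is essentially immediate from inner amenability, to "$\beta_n^{(2)}(G)=0$ for all $n$". Inner amenability alone is \emph{not} known to imply vanishing of higher $\ell^2$-Betti numbers, so one genuinely needs the stronger structural feature — two \emph{noncommuting} asymptotically central sequences, equivalently $C_G(F)$ non-Abelian for every finite $F$ — and one needs the correct black-box theorem that converts this into higher vanishing. I would cite the relevant result (I believe this is due to Chifan--Sinclair--Udrea, or can be assembled from \cite{ChiSinUdr16} together with the Peterson--Sinclair/Lück machinery) and then verify its hypotheses using Lemma~\ref{lem:IAfullgroup}(iii), taking care that the asymptotically central sequences lie in $\fA(\Gamma,X)\le G$ and that enlarging the finite point-set $F$ lets the same sequences centralize any prescribed finite subset of $G$ (each element of $G$ moves only finitely much clopen data, so it fixes a neighborhood of $x$ once the supports of $c_n,d_n$ are small enough — this is the ``asymptotic'' in asymptotically central). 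Assembling these pieces gives the theorem.
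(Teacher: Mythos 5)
Your proposal has the right architecture --- dispose of the bounded-orbit case via local finiteness, then exploit the noncommuting asymptotically central sequences from Lemma~\ref{lem:IAfullgroup}(iii) inside the stabilizers $G_{(F)}$ and feed the result into a weak-normality vanishing theorem --- but the step you yourself flag as ``the main obstacle'' is a genuine gap, and the black box you propose does not exist in the form you state it. The assertion that a normal (or wq-normal) subgroup $N$ with $\beta_1^{(2)}(N)=0$ and infinite centralizer forces \emph{all} $\ell^2$-Betti numbers of the ambient group to vanish is not a theorem of Chifan--Sinclair--Udrea or Peterson--Thom; \cite{ChiSinUdr16} only controls the first $\ell^2$-Betti number, and the ``K\"unneth against $\Z\times\Z$ or a lamplighter'' sketch does not assemble into a proof. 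Moreover $G_{(x)}$ is not normal in $G$ (its conjugates are the groups $G_{(gx)}$), so even the normal-subgroup version would not apply directly.

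The paper closes this gap with two specific results of Bader--Furman--Sauer \cite{BFS14} and one small but essential trick you are missing: after passing to a subsequence, the asymptotically central sequence $(c_n)$ in $G_{(F)}$ can be taken to consist of pairwise commuting elements, so that it generates an infinite \emph{abelian} --- hence amenable --- subgroup $A\leq G_{(F)}$ with $\bigcap_{g\in Q}gAg^{-1}$ infinite for every finite $Q\subseteq G_{(F)}$ (it contains all but finitely many of the $c_n$). Corollary~1.5 of \cite{BFS14} then kills all $\ell^2$-Betti numbers of $G_{(F)}$, in every degree, for every finite $F$ meeting $X_0$. The second stage is another application of weak normality: for finite $Q\subseteq G$ one has $\bigcap_{g\in Q}gG_{(x)}g^{-1}=G_{(Qx)}$, whose Betti numbers all vanish by the first stage, and Theorem~1.3 of \cite{BFS14} then gives the vanishing for $G$ itself. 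So the correct reference is \cite{BFS14}, the amenability of the subgroup generated by a commuting subsequence is the lever that reaches all degrees, and the argument is a two-stage application of weak normality rather than a single appeal to a normal-subgroup theorem.
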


\begin{proof}
If there is a uniform finite bound on the size of $\Gamma$-orbits then $G$ is locally finite by (i) of Lemma \ref{lem:IAfullgroup}, so all $\ell ^2$-Betti numbers of $G$ vanish. Assume now that there is no such uniform bound on the size of $\Gamma$-orbits. Then the set $X_0$ from (ii) of Lemma \ref{lem:IAfullgroup} is nonempty. We first show that for each finite subset $F$ of $X$ which meets $X_0$, the $\ell^2$-Betti numbers of $G_{(F)}$ all vanish.
By (iii) of Lemma \ref{lem:IAfullgroup}, the group $G_{(F)}$ has a nontrivial asymptotically central sequence $(c_n)_{n\in \N}$.  After moving to a subsequence we may assume additionally that the $c_n$ for $n\in \N$ pairwise commute, and that
the group they generate, which we will call $A$, is an infinite Abelian subgroup of $G_{(F)}$. Note that, for any finite subset $Q$ of $G_{(F)}$, the intersection $\bigcap _{g\in Q} gAg^{-1}$ is infinite since it contains all but finitely many of the $c_n$. It therefore follows from
Corollary~1.5 of \cite{BFS14} that all $\ell^2$-Betti numbers of $G_{(F)}$ vanish.

Now fix any $x\in X_0$. Then for any finite subset $Q$ of $G$, letting $F_Q:= Qx \subseteq X_0$ we have $\bigcap _{g\in Q}gG_{(x)}g^{-1} = G_{(F_Q)}$, and so all $\ell^2$-Betti numbers of this intersection vanish. We can therefore apply Theorem~1.3 of \cite{BFS14} to conclude that all $\ell^2$-Betti numbers of $G$ vanish.
\end{proof}

\section{Jones--Schmidt stability}\label{S-JS-stability}

We continue to use the notation and terminology from the last section.
In what follows $\mc{R}_0$ denotes the unique ergodic hyperfinite type II$_1$ equivalence relation.

\begin{definition}
An ergodic discrete p.m.p.\ equivalence relation $\mc{R}$ is said to be {\it JS-stable} if $\mc{R}$ is isomorphic to the product $\mc{R}\otimes \mc{R}_0$.
A countable group $G$ is said to be {\it JS-stable} if it admits a free ergodic p.m.p.\ action whose orbit equivalence relation is JS-stable.
\end{definition}

Let $G\curvearrowright (Z,\eta )$ be a p.m.p.\ action of a countable discrete group. We denote by $[G\ltimes (Z,\eta )]$ the full group of the action, i.e., the collection of all measurable maps $T:Z\rightarrow G$ with the property that the transformation $T^0: Z\rightarrow Z$, defined by $T^0(z):= T(z) z$, is an automorphism of the measure space $(Z,\eta )$.

\begin{definition}\label{def:stabseq}
Let $G\curvearrowright (Y,\nu )$ be a p.m.p.\ action of a countable discrete group. A {\it stability sequence} for the action is a sequence $(T_n,A_n)_{n\in \N}$ with $T_n\in [G\ltimes (Y,\nu )]$ and $A_n$ a measurable subset of $Y$
such that
\begin{enumerate}
\item[(1)] $\nu (\{ y\in Y \, : \, T_n (gy) =gT_n(y)g^{-1} \} ) \rightarrow 1$ for all $g\in G$,
\item[(2)] $\nu (T_n^0(B)\triangle B ) \rightarrow 0$ for all measurable $B\subseteq Y$,
\item[(3)] $\nu (gA_n\triangle A_n)\rightarrow 0$ for all $g\in G$,
\item[(4)] $\nu (T_n^0(A_n)\triangle A_n ) \geq \frac12$ for all $n$.
\end{enumerate}
\end{definition}

The following theorem was originally established by Jones and Schmidt in \cite{JS87} under the addition
hypotheses of freeness and ergodicity, which were later removed by Kida \cite{Kid15a}.

\begin{theorem}[\cite{JS87,Kid15a}]
Suppose that $G$ has a p.m.p.\ action $G\curvearrowright (Y,\nu )$ which admits a stability sequence. Then $G$ is JS-stable.
\end{theorem}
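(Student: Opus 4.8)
The goal is to produce a free ergodic p.m.p.\ action of $G$ whose orbit equivalence relation $\mc{R}$ is isomorphic to $\mc{R}\otimes\mc{R}_0$; by definition this is exactly the assertion that $G$ is JS-stable, so everything reduces to manufacturing such an action together with the required isomorphism.

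\emph{Reduction to the free ergodic case.} The plan is to first upgrade the given action to a free ergodic one. Take a free ergodic p.m.p.\ action $G\curvearrowright (Z,\zeta)$ (for instance a Bernoulli shift), disintegrate $\nu$ over the $\sigma$-algebra of $G$-invariant sets, pass to the diagonal action $G\curvearrowright (Y\times Z,\nu\times\zeta)$, and restrict to a suitable ergodic component to obtain a free ergodic action. The real content here is to carry a stability sequence along: the asymptotically invariant sets $A_n$ transport essentially directly (via sets of the form $A_n\times Z$), whereas the transformations $T_n$ must be replaced by elements of the full group of the product relation that are adapted to the disintegration, in the spirit of the fiberwise-to-global argument behind Theorem~\ref{thm:IRSstable}. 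I would regard this bookkeeping — the part of the statement due to Kida rather than to Jones and Schmidt — as one of the two nontrivial points, though not the deepest. So assume from now on that $G\curvearrowright (Y,\nu)$ is free and ergodic, with orbit equivalence relation $\mc{R}$.

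\emph{Reading off the Jones--Schmidt data.} Set $\phi_n:=T_n^0\in[\mc{R}]$. Condition~(2) says $\phi_n\to\id$ in the weak topology on $[\mc{R}]$; condition~(1), combined with the fact that the translations by elements of $G$ generate $\mc{R}$ and weakly approximate every element of $[\mc{R}]$, upgrades to the statement that $(\phi_n)$ is asymptotically central in $[\mc{R}]$; and conditions~(3)--(4) supply asymptotically $\mc{R}$-invariant sets $A_n$ with $\nu(\phi_n(A_n)\triangle A_n)\ge\tfrac12$. This is precisely the configuration that Jones and Schmidt identify as equivalent to JS-stability of an ergodic p.m.p.\ equivalence relation, so it remains to run their construction of an isomorphism $\mc{R}\cong\mc{R}\otimes\mc{R}_0$ out of it.

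\emph{Building the copy of $\mc{R}_0$, and the main obstacle.} The idea is to use $(\phi_n)$ and $(A_n)$ to produce, inside $[\mc{R}]$, an increasing sequence of finite measured subrelations $\mc{S}_1\subseteq\mc{S}_2\subseteq\cdots$ of $\mc{R}$ whose union is a hyperfinite subrelation measure-theoretically transverse to a complementary copy of $\mc{R}$. Since each $\phi_n$ is weakly close to $\id$ but asymptotically central, conjugating a given finite partition by $\phi_n$ produces a partition that is almost $\mc{R}$-independent of the original yet genuinely displaced — this is where (4) enters, with the escaping sets $A_n$ anchoring the nontriviality — and diagonalizing over a countable weakly dense subset of $[\mc{R}]$ and a generating sequence of finite partitions assembles these approximate pieces into an honest $\mc{R}_0$-subrelation independent of $\mc{R}$, hence a measure isomorphism $\mc{R}\cong\mc{R}\otimes\mc{R}_0$. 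The genuinely hard step, and the technical heart of \cite{JS87}, is this last limiting argument: promoting all the ``asymptotically independent, asymptotically central'' approximations to an exact product splitting. That is the step I would ultimately invoke from \cite{JS87} (with the non-free, non-ergodic bookkeeping supplied by \cite{Kid15a}) rather than rederive.
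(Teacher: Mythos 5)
The paper offers no proof of this statement: it is imported verbatim from the literature, with the sentence immediately preceding it attributing the free ergodic case to Jones and Schmidt \cite{JS87} and the removal of freeness and ergodicity to Kida \cite{Kid15a}. Your write-up is likewise a sketch that defers both substantive steps (the reduction to a free ergodic action, and the limiting construction of the splitting $\mathcal{R}\cong\mathcal{R}\otimes\mathcal{R}_0$) to those same two references, so there is no real divergence from the paper to report; your account of how conditions (1)--(4) translate into the Jones--Schmidt criterion for the orbit equivalence relation, and of the McDuff-style diagonalization that turns the approximate data into an exact product decomposition, is a fair description of what \cite{JS87} does.

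One caution about the only concrete claim you do make. In the reduction step you say the sets transport "essentially directly" via $A_n\times Z$ while the $T_n$ need only routine adaptation. The sets are indeed fine: with $\tilde T_n(y,z):=T_n(y)$ one checks that conditions (1), (3), and (4) pass to $(\tilde T_n, A_n\times Z)$ on the diagonal action $G\curvearrowright (Y\times Z,\nu\times\zeta)$. But condition (2) genuinely fails for this naive extension: $\tilde T_n^0(y,z)=(T_n(y)y,\,T_n(y)z)$, so for a set of the form $B=Y\times C$ one gets $(\nu\times\zeta)(\tilde T_n^0(B)\triangle B)=\int_Y \zeta(T_n(y)C\triangle C)\,d\nu(y)$, which is bounded away from $0$ whenever the $T_n$ do not converge to $1_G$ in measure and the action on $Z$ mixes. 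Repairing this is exactly the content of Kida's theorem (and is the same difficulty that the paper's own Theorem~\ref{thm:IRSstable} confronts in its fiberwise construction, where the global $T_n$ is built by conjugating fiberwise data rather than by extending diagonally). So the "bookkeeping" you describe is the actual mathematical content of the non-free, non-ergodic case, not an afterthought; as long as you are invoking \cite{Kid15a} for it, as the paper does, the argument is complete, but it should not be presented as routine.
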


The following generalizes Theorem~17 of \cite{TD14}, by replacing the coamenable normal subgroup used in that theorem with an invariant random subgroup which is coamenable in an appropriate sense.

\begin{theorem}\label{thm:IRSstable}
Let $G$ be a countable discrete group and let $\pi :(Y,\nu ) \rightarrow (X,\mu )$ be a $G$-equivariant measure-preserving map between p.m.p.\ actions $G\curvearrowright (Y,\nu )$ and $G\curvearrowright (X,\mu )$. Let $(Y,\nu ) = \int _X (Y_x,\nu _x)\, d\mu$ denote the disintegration of $(Y,\nu )$ via $\pi$, so that $Y_x=\pi ^{-1}(x)$, and let $G_x$ denote the stabilizer subgroup at a point $x\in X$. Suppose that
\begin{itemize}
\item[(a)] the orbit equivalence relation of $G\curvearrowright (X,\mu )$ is $\mu$-hyperfinite, and
\item[(b)] the action $G_x\curvearrowright (Y_x,\nu _x)$ admits a stability sequence for $\mu$-almost every $x\in X$.
\end{itemize}
Then the action $G\curvearrowright (Y,\nu )$ admits a stability sequence, and hence $G$ is JS-stable.
\end{theorem}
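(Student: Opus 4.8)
The plan is to build the stability sequence $(T_n, A_n)$ for $G \curvearrowright (Y,\nu)$ directly out of the fiberwise stability sequences supplied by hypothesis~(b), using hypothesis~(a) to patch them together equivariantly. First I would set up the measurable structure: by a standard selection/disintegration argument, one can choose the fiberwise stability sequences $(T_n^x, A_n^x)_{n \in \N}$ for the actions $G_x \curvearrowright (Y_x, \nu_x)$ to depend measurably on $x \in X$. Here $T_n^x \in [G_x \ltimes (Y_x,\nu_x)]$ and $A_n^x \subseteq Y_x$, and the four defining conditions of a stability sequence hold for $\mu$-almost every $x$. Concretely this produces, for each $n$, a measurable map $T_n \colon Y \to G$ with $T_n(y) \in G_{\pi(y)}$, together with a measurable set $A_n = \{ y \in Y : y \in A_n^{\pi(y)}\}$. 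The fact that $T_n(y)$ lies in the stabilizer of $\pi(y)$ means exactly that the associated transformation $T_n^0$ fixes $\pi$, i.e.\ $\pi \circ T_n^0 = \pi$, so $T_n^0$ acts fiberwise, and fiberwise it is the measure-preserving transformation coming from $(T_n^x)^0$; hence $T_n^0$ is an automorphism of $(Y,\nu)$, so $T_n \in [G \ltimes (Y,\nu)]$.

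Next I would verify conditions (2) and (4) of Definition~\ref{def:stabseq}, which are essentially immediate from the fiberwise versions by integrating over $X$. For (2): given measurable $B \subseteq Y$, for $\mu$-a.e.\ $x$ we have $\nu_x\big((T_n^x)^0(B \cap Y_x) \triangle (B \cap Y_x)\big) \to 0$ by fiberwise condition (2); the bounded convergence theorem then gives $\nu(T_n^0(B) \triangle B) = \int_X \nu_x\big(\cdots\big)\, d\mu \to 0$. For (4): $\nu(T_n^0(A_n) \triangle A_n) = \int_X \nu_x\big((T_n^x)^0(A_n^x) \triangle A_n^x\big)\, d\mu \ge \int_X \tfrac12\, d\mu = \tfrac12$.

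The two conditions that genuinely require hypothesis~(a) are (1) (asymptotic conjugation-invariance of $T_n$) and (3) (asymptotic $G$-invariance of $A_n$), since $g \in G$ need not preserve fibers — it maps $Y_x$ to $Y_{gx}$. This is exactly the point where the coamenability of the invariant random subgroup $x \mapsto G_x$ (encoded by the $\mu$-hyperfiniteness of $G \curvearrowright (X,\mu)$) enters, mirroring the role of the coamenable normal subgroup in Theorem~17 of \cite{TD14}. Since the orbit equivalence relation $\mc{R}_X$ of $G \curvearrowright (X,\mu)$ is hyperfinite, write it as an increasing union $\mc{R}_X = \bigcup_k \mc{E}_k$ of finite Borel subequivalence relations. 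The idea is to average, or rather to transport, the fiberwise data along the $\mc{E}_k$-classes: for large $k$ the relation $\mc{E}_k$ captures most of the $G$-action on $(X,\mu)$ in a Følner-type sense, so one can use the fiberwise stability sequences over a fundamental domain of $\mc{E}_k$ and extend them $\mc{E}_k$-equivariantly. A diagonalization over $k$ and $n$, choosing the index $n = n(k)$ growing fast enough, then yields a single sequence for which the conjugation defect in (1) and the translation defect in (3) are controlled: for a fixed $g$, the set of $x$ where $gx$ lies in a different $\mc{E}_k$-class than $x$ has measure tending to $0$ as $k \to \infty$ (this is the hyperfiniteness/amenability input), and on the complementary set the fiberwise conditions (1) and (3) give the required convergence after transporting by the canonical isomorphism $Y_x \to Y_{gx}$ induced by $g$. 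I expect this patching step — making the $\mc{E}_k$-equivariant extension of the fiberwise data genuinely measurable and simultaneously controlling both the conjugation defect and the $A_n$-translation defect through the diagonalization — to be the main technical obstacle; everything else reduces to fiberwise facts plus integration. Once (1)–(4) are established, the action $G \curvearrowright (Y,\nu)$ admits a stability sequence, and JS-stability of $G$ follows from the theorem of Jones--Schmidt and Kida quoted above.
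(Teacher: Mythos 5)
Your plan is essentially the paper's proof: the paper writes the hyperfinite relation on $X$ as an increasing union of finite Borel subequivalence relations $\mc{S}_n$, selects for each $n$ a single pair $(S_{n,x_0},D_{n,x_0})$ on each transversal fiber satisfying finitary $1/n$-versions of the four stability conditions relative to the transported ``return cocycle'' elements $\rho_n(g,x)=\varphi_n(gx)^{-1}g\varphi_n(x)\in G_{\theta_n(x)}$, and then extends to the rest of each $\mc{S}_n$-class by conjugating and translating with the choice function $\varphi_n$ --- precisely the equivariant patching plus diagonalization you describe for conditions (1) and (3). The only refinement worth flagging is that condition (2) (not just (1) and (3)) must also be requested fiberwise for the $n$-dependent translated sets $\varphi_n(x)^{-1}B$, $B$ in a generating algebra, so the measurable selection is most cleanly done per $n$ with finitary quantitative conditions rather than by first selecting entire fiberwise stability sequences.
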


\begin{proof}
Let $\mc{S}$ denote the orbit equivalence relation of $G\curvearrowright (X,\mu )$. Since $\pi$ is $G$-equivariant and
measure-preserving, uniqueness of measure disintegrations implies that $\nu _{gx} = g_*\nu _x$ for all $g\in G$ and $\mu$-a.e.\ $x\in X$.  After discarding a null set we may assume that this holds for all $x\in X$, that $\mc{S}$ is hyperfinite, and also that $G_x\curvearrowright (Y_x,\nu _x )$ admits a stability sequence for all $x\in X$. Let $F_0\subseteq F_1\subseteq \cdots$ be an increasing sequence of finite subsets of $G$ which exhaust $G$. Let $\mathcal{B}_0\subseteq \mathcal{B}_1 \subseteq \cdots$ be an increasing sequence of finite Boolean algebras of Borel subsets of $Y$ whose union $\mathcal{B}:=\bigcup _n \mathcal{B}_n$ generates the Borel $\sigma$-algebra on $Y$. Let $\mathcal{S}_0\subseteq \mc{S}_1\subseteq \cdots$ be an increasing sequence of finite Borel subequivalence relations of $\mc{S}$ with $\mc{S}=\bigcup _n \mc{S}_n$. For each $n$ let $X_n\subseteq X$ be a Borel transversal for $\mc{S}_n$, and for $x\in X$ let $\theta _n(x)$ be the unique element of $[x]_{\mc{S}_n}\cap X_n$.
Let $\varphi _n : X\rightarrow G$ be any measurable map with $\varphi _n (x)\theta _n (x) =x$. Let $\rho _n :G\times X \rightarrow G$ be given by $\rho _n (g, x):=\varphi _n(gx)^{-1}g\varphi _n (x)$, and note that if $(gx,x)\in \mathcal{S}_n$ then $\varphi _n(gx)^{-1}g\varphi _n (x)\theta _n (x) = \theta _n (x)$, and hence $\rho _n (g, x) \in G_{\theta _n (x)}$.

For each $n$ and $x_0\in X_n$, since $G_{x_0}\curvearrowright (Y_{x_0},\nu _{x_0} )$ admits a stability sequence we may find
an $S_{n,x_0}\in [G_{x_0}\ltimes (Y_{x_0},\nu _{x_0} ) ]$ and a measurable subset $D_{n,x_0}$ of $Y_{x_0}$ satisfying:
\begin{enumerate}[itemindent=*, leftmargin=*]
\item[(i)] $\nu _{x_0}(W_{n,x_0}) >1-1/n$, where $W_{n,x_0}$ is the set of all $y\in Y_{x_0}$ such that for all $g\in F_n$ and $x\in X$ with $x,gx \in [x_0]_{\mc{S}_n}$ one has
\[
S_{n,x_0}(\rho _n (g,x)y) = \rho _n (g,x)S_{n,x_0}(y)\rho _n (g,x)^{-1} ,
\]
\item[(ii)] $\nu _{x_0} (S_{n,x_0}^0(\varphi _n (x)^{-1} B)\triangle \varphi _n (x)^{-1}B)< 1/n$ for all $B\in \mathcal{B}_n$
and $x\in [x_0]_{\mc{S}_n}$,
\item[(iii)] $\nu _{x_0}(\rho (g,x)^{-1} D_{n,x_0}\triangle D_{n,x_0} ) <1/n$ for all $g\in F_n$ and $x\in X$ with $x,gx \in [x_0]_{\mc{S}_n}$,
\item[(iv)] $\nu _{x_0}(S_{n,x_0}(D_{n,x_0})\triangle D_{n,x_0}) \geq 1/2$.
\end{enumerate}
By a standard measurable selection argument that requires discarding a null set in $X$,
we may additionally assume without loss of generality that the maps $x_0\mapsto S_{n,x_0}$ and $x_0\mapsto D_{n,x_0}$ are Borel in the sense that the function $y\mapsto S_{n,\theta _n(\pi (y))}(y)$ from $Y$ to $G$ is Borel
and $\{ y\in Y \, : \, y\in D_{n,\theta _n(\pi (y))} \}$ is a Borel subset of $Y$.

For each $n\in \N$ define $T_n :Y\rightarrow G$ and $A_n\subseteq Y$ by
\begin{align*}
T_n (y) &:= \varphi _n(x)S_{n,\theta _n (x)}(\varphi _n (x)^{-1}y )\varphi _n (x)^{-1} \ \text{ for }y\in Y_x, \ x\in X  \\
A_n&:= \{ y\in Y \, : \, \, y\in \varphi _n (x)D_{n,\theta _n (x)}, \text{ where } x=\pi (y) \} .
\end{align*}
We will show that $(T_n,A_n )_{n\in \N}$ is a stability sequence for $G\curvearrowright (Y,\nu )$. First note that the map $T_n^0: y\mapsto T_n(y)y$ is indeed an automorphism of $(Y,\nu )$, since for every $x\in X$ the restriction $T_{n,x}^0$ of $T_n^0$ to $Y_x$ is an automorphism of $(Y_x,\nu _x )$ (here we are using that $\varphi _n(x)_*\nu _{\theta _n (x)} = \nu _{\varphi _n (x)\theta _n (x)}= \nu _{x}$).
It remains to check that properties (1) through (4) of Definition \ref{def:stabseq} hold for $(T_n,A_n)_{n\in \N}$.

(1): Let $g\in G$. Given $x\in X$, for all large enough $n$ we have $g\in F_n$ and $(gx,x)\in \mathcal{S}_n$ so that $\theta _n(gx)=\theta _n (x)$, and hence for all $y\in \varphi _n(x)W_{n,\theta _n (x)}\subseteq Y_x$ we have
\begin{align*}
T_n(gy) &= \varphi _n(gx)S_{n,\theta _n (gx)} (\varphi _n (gx)^{-1}gy )\varphi _n (gx)^{-1} \\
&=\varphi _n(gx)S_{n,\theta _n (x)} (\rho _n (g,x)\varphi _n (x)^{-1}y )\varphi _n (gx)^{-1} \\
&= \varphi _n (gx)\rho _n (g,x)S_{n,\theta _n(x)}(\varphi _n (x)^{-1}y)\rho _n (g,x)^{-1}\varphi _n (gx)^{-1} \\
&= g\varphi _n (x)S_{n,\theta _n(x)}(\varphi _n (x)^{-1}y)\varphi _n (x)^{-1}g^{-1}\\
&= gT_n(y)g^{-1} .
\end{align*}
Since by (i) we have $\nu _x (\varphi _n (x) W_{n,\theta _n(x)}) = \nu _{\theta _n(x)}(W_{n,\theta _n (x)})>1-1/n$, it follows that (1) holds.

(2): It suffices to show (2) for $B\in \mathcal{B}$. Let $n$ be large enough so that $B\in \mathcal{B}_n$. For $x\in X$ we have $T_{n}^0(B \cap Y_x) = \varphi _n(x)S_{n,\theta _n (x)}^0(\varphi _n (x)^{-1}B) \cap Y_x$, and hence by (ii) applied to $x_0=\theta _n (x)$ we have
\[
\nu _x (T_{n}^0(B)\triangle B) = \nu _{\theta _n (x)}(S_{n,\theta _n (x)}^0(\varphi _n (x)^{-1}B)\triangle \varphi _n (x)^{-1}B ) < 1/n .
\]
It follows that $\nu (T_n^0(B)\triangle B) < 1/n$ for all large enough $n$, and hence (2) holds.

(3): Let $g\in G$. Given $x\in X$, for all large enough $n$ we have $g\in F_n$ and $(gx,x)\in \mathcal{S}_n$, and hence for $y\in Y_x$ we have $gy\in A_n$ if and only if $y\in g^{-1}\varphi _n (gx)D_{n,\theta _n (gx)}= \varphi _n(x)\rho _n(g,x)^{-1}D_{\theta _n(x)}$. Therefore
\begin{align*}
\nu _x (g^{-1} A_n \triangle A_n )
&= \nu _x(\varphi _n (x)\rho _n (g,x)^{-1}D_{\theta _n (x)}\triangle \varphi _n (x)D_{\theta _n (x)} ) \\
& = \nu _{\theta _n(x)} (\rho _n (g,x)^{-1}D_{\theta _n(x)}\triangle D_{\theta _n (x)} ) < 1/n
\end{align*}
by (iii). Since $\nu (g^{-1}A_n\triangle A_n )= \int _X \nu _x (g^{-1} A_n \triangle A_n ) \, d\mu (x)$, property (3) follows.

(4): For each $x\in X$, we have
\[
(T_n^0(A_n)\triangle A_n )\cap Y_x = \varphi _n (x)(S^0_{n,\theta _n(x)}(D_{n,\theta _n(x)})\triangle D_{n,\theta _n(x)})
\]
and hence $\nu _x (T_n^0(A_n)\triangle A_n ) = \nu _{\theta _n (x)}(S^0_{n,\theta _n(x)}(D_{n,\theta _n(x)})\triangle D_{n,\theta _n(x)}) \geq 1/2$ by (iv). This implies (4).
\end{proof}

In the remainder of this section we write $\lambda$ for the Lebesgue measure on $[0,1]$.

The next lemma is a continuous base version of Proposition~9.8 in \cite{Ke10}.

\begin{lemma}[\cite{Ke10}]\label{lem:Kec}
Let $H$ be a countable discrete group, let $H^*:= H\setminus \{ 1_H \}$, and consider the generalized Bernoulli action $H \curvearrowright ([0,1]^{H^*}, \lambda ^{H^*})$, associated to the conjugation action $H\curvearrowright H^*$.  Suppose that there exist sequences $(c_n)_{n\in \N}$, $(d_n)_{n\in \N}$ in $H$ which are asymptotically central in $H$ and satisfy $c_nd_n\neq d_nc_n$ for all $n$. Then the action $H\curvearrowright ([0,1]^{H^*},\lambda ^{H^*} )$ admits a stability sequence.
\end{lemma}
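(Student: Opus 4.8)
The plan is to write down, for this particular action, an explicit stability sequence of a very simple shape: $T_n$ constant and $A_n$ a half-space in a single coordinate. Put $Y=[0,1]^{H^*}$ and $\nu=\lambda^{H^*}$, with the $H$-action given coordinatewise by $(h\cdot y)_j=y_{h^{-1}jh}$ for $j\in H^*$. I would first record two elementary consequences of the hypotheses: since $c_nd_n\neq d_nc_n$ we can have neither $c_n=1_H$ nor $d_n=1_H$, nor $d_nc_nd_n^{-1}=c_n$; hence $c_n$, $d_n$ and $d_nc_nd_n^{-1}$ all lie in $H^*$, with $d_nc_nd_n^{-1}\neq c_n$. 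Then I would let $T_n\in[H\ltimes(Y,\nu)]$ be the constant map $T_n(y):=d_n$, so that $T_n^0$ is the measure-preserving transformation $y\mapsto d_n\cdot y$ (this is visibly a measure automorphism, so indeed $T_n\in[H\ltimes(Y,\nu)]$), and set
\[
A_n:=\bigl\{\,y\in Y\ :\ y_{c_n}<\tfrac12\,\bigr\},
\]
a set of measure $\tfrac12$. The claim is that $(T_n,A_n)_{n\in\N}$ is a stability sequence, after which the theorem of Jones--Schmidt and Kida quoted above yields JS-stability of $H$.

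Next I would check properties (1)--(4) of Definition~\ref{def:stabseq} in turn. For (1), given $g\in H$, asymptotic centrality of $(d_n)$ gives $gd_ng^{-1}=d_n$ for all large $n$, so $\{\,y:T_n(gy)=gT_n(y)g^{-1}\,\}=\{\,y:d_n=gd_ng^{-1}\,\}$ equals all of $Y$ eventually. For (2) it is enough to treat $B$ in the algebra of cylinder sets depending on finitely many coordinates, say on a finite set $S\subseteq H^*$: asymptotic centrality of $(d_n)$ gives $d_nsd_n^{-1}=s$ for every $s\in S$ once $n$ is large, so $d_n\cdot y$ and $y$ agree in every coordinate from $S$ and hence $T_n^0(B)=B$; the general measurable case follows by the standard $L^1$-approximation, using that each $T_n^0$ is a measure-preserving bijection.

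For (3), given $g\in H$, unwinding the action gives $g\cdot A_n=\{\,y:y_{gc_ng^{-1}}<\tfrac12\,\}$, and asymptotic centrality of $(c_n)$ gives $gc_ng^{-1}=c_n$ for all large $n$, so $gA_n=A_n$ eventually and $\nu(gA_n\triangle A_n)\to0$. For (4), a short computation gives $T_n^0(A_n)=\{\,y:y_{d_nc_nd_n^{-1}}<\tfrac12\,\}$; since $d_nc_nd_n^{-1}$ and $c_n$ are distinct indices, under the product measure the events $\{y_{d_nc_nd_n^{-1}}<\tfrac12\}$ and $\{y_{c_n}<\tfrac12\}$ are independent of probability $\tfrac12$ each, whence $\nu(T_n^0(A_n)\triangle A_n)=\tfrac12\geq\tfrac12$.

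I do not anticipate any real obstacle here; the work lies almost entirely in recognizing the construction. The one conceptual point I would stress is that asymptotic centrality of $(c_n)$ and $(d_n)$ means precisely that conjugation by any fixed element of $H$ eventually fixes these ``runaway'' indices, and this single fact simultaneously forces $T_n^0\to\mathrm{id}$ in the weak topology (giving (2)) and makes $A_n$ asymptotically $H$-invariant (giving (3)); meanwhile the non-commutation $c_nd_n\neq d_nc_n$ is exactly what prevents the transformation $y\mapsto d_n\cdot y$ from fixing the $c_n$-coordinate, and so supplies the uniform displacement in (4). The only spots demanding any care are the verifications that $c_n,d_n\in H^*$ and the reduction of (2) to cylinder sets.
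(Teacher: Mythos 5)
Your construction is exactly the paper's (which takes $T_n$ constant equal to one of the sequences and $A_n$ a half-space in the coordinate indexed by the other, deferring the verification to Proposition~9.8 of \cite{Ke10}), with the roles of $(c_n)$ and $(d_n)$ swapped --- immaterial since the hypotheses are symmetric in the two sequences. Your verification of properties (1)--(4) is correct, including the points needing care ($c_n,d_n\in H^*$, the reduction of (2) to cylinder sets, and the independence argument giving exactly $\tfrac12$ in (4)).
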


\begin{proof}
Define $S_n : [0,1]^{H^*} \rightarrow H$ to be the constant map $S_n(y):=c_n$, and define $D_n := \{ y\in [0,1]^{H^*} : y(d_n) \in [0,\tfrac{1}{2}] \}$ (note that $d_n\neq 1_G$ since $c_nd_n\neq d_nc_n$, and so this definition makes sense).
Then $(S_n,D_n)$ is a stability sequence for $H\curvearrowright ([0,1]^{H^*},\lambda ^{H^*} )$.
Indeed the first three conditions in Definition~\ref{def:stabseq} follow directly from asymptotic centrality, while
for the fourth we observe that since
$S_n (D_n) = \{ y\in [0,1]^{H^*} : y(c_n d_n c_n^{-1} ) \in [0,\tfrac{1}{2}] \}$
and $c_n d_n c_n^{-1} \neq d_n$
we have $\lambda ^{H^*} (S_n (D_n ) \triangle D_n ) = \lambda ^{H^*} (S_n (D_n ))\lambda ^{H^*} (D_n ) = \frac12$.\end{proof}

In what follows we will make use of the notion of amenability, relative to a measure, of a countable Borel equivalence relation. There are a variety of equivalent ways of defining amenability in this context, and we will find it convenient to use a definition, essentially coming from \cite{Hjo06}, in terms of actions on Borel bundles of compact metric spaces.

\begin{definition}
Let $W$ be a standard Borel space and let $\mathcal{R}$ be a countable Borel equivalence relation on $W$.

A {\it Borel bundle of compact metric spaces} over $W$ is a standard Borel space $M$ together with Borel maps $\pi : M\rightarrow W$ and $d: M\ast M \rightarrow [0,\infty )$ (where $M\ast M$ denotes the set $\{ (a,b)\in M\times M :\pi (a)=\pi (b)\}$) such that for each $w\in W$ the restriction of $d$ to $M_w\times M_w$ is a metric on $M_w=\pi ^{-1}(w)$ which is compact.

Given such a bundle $M$, for $w\in W$ let $P(M_w)$ denote the (compact metrizable) space of all Borel probability measures on $M_w$, and let $P(M)=\bigcup _{w\in W} P(M_w)$. Then $P$ is also naturally a Borel bundle of compact metric spaces over $W$ (see \cite[Appendix C]{Bow18} for details). A Borel {\it section} of $P(M)$ is a Borel map $W\rightarrow P(M)$, $w\mapsto \lambda _w$, with $\lambda _w \in P(M_w)$ for all $w\in W$.

A {\it Borel action of $\mathcal{R}$ by fiberwise homeomorphisms on $M$} is an assignment of a homeomorphism $\alpha (w_1,w_0) : M_{w_0}\rightarrow M_{w_1}$ to each $(w_1,w_0)\in \mathcal{R}$ satisfying the condition $\alpha (w_2,w_1)\alpha (w_1,w_0)=\alpha (w_2,w_0)$ for all $(w_2,w_1),(w_1,w_0)\in \mathcal{R}$ and such that the map $((w_1,w_0), a)\mapsto \alpha (w_1,w_0)a$ is Borel from $\{ ((w_1,w_0),a)\in \mathcal{R}\times M : a\in M_{w_0}\}$ to $M$.

Let $\rho$ be a probability measure on $W$ which is preserved by $\mathcal{R}$, and let $\alpha$ be a Borel action of $\mathcal{R}$ on $M$ as above. A Borel section $w\mapsto \lambda _w$ of $P(M)$ is said to be {\it $\rho$-a.e.\ $(\mathcal{R},\alpha )$-invariant} if there exists a $\rho$-conull subset $W'$ of $W$ such that $\alpha (w_1,w_0)_*\lambda _{w_0}=\lambda _{w_1}$ for all $(w_1,w_0)\in \mathcal{R}|W'$.

We say that the equivalence relation $\mathcal{R}$ is {\it $\rho$-amenable} if, for every Borel action $\alpha$ of $\mc{R}$ by homeomorphisms on a Borel bundle $M$ of compact metric spaces over $W$, there exists a Borel section of $P(M)$ which is $\rho$-a.e.\ $(\mathcal{R},\alpha )$-invariant.
\end{definition}

\begin{lemma}\label{L-amenable OE}
Let $p: (W,\rho )\rightarrow (X,\mu )$ be a $G$-equivariant measure-preserving map between p.m.p.\ actions $G\curvearrowright (W,\rho )$ and $G\curvearrowright (X,\mu )$. Let $(W,\rho ) = \int _X (W_x,\rho _x)\, d\mu$ denote the disintegration of $(W,\rho )$ via $p$, so that $W_x=p ^{-1}(x)$, and let $G_x$ denote the stabilizer subgroup at $x\in X$. Suppose that
\begin{enumerate}
\item[(a)] the action $G\curvearrowright (X,\mu )$ generates a $\mu$-amenable orbit equivalence relation, and
\item[(b)] for $\mu$-a.e.\ $x\in X$ the action of $G_{x}$ on $(W_x,\rho _x )$ generates a $\rho _x$-amenable orbit equivalence relation.
\end{enumerate}
Then the action $G\curvearrowright (W,\rho )$ generates a $\rho$-amenable orbit equivalence relation.
\end{lemma}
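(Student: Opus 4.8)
The plan is to exploit the fact that amenability of an equivalence relation is equivalent, by work of Connes–Feldman–Weiss and Zimmer, to the existence of an equivariant sequence of almost-invariant means on the orbits (equivalently, approximate invariance under the groupoid action), and to patch together such means on the fibers with almost-invariant means on the base. Concretely, let $\cR$ denote the orbit equivalence relation of $G\curvearrowright (W,\rho)$, let $\cR_X$ denote that of $G\curvearrowright (X,\mu)$, and for $\mu$-a.e.\ $x$ let $\cR_x$ denote the orbit equivalence relation of $G_x\curvearrowright (W_x,\rho_x)$. The first step is to observe that the map $p$ induces a class-bijective (fiberwise) structure: the orbit $[w]_{\cR}$ of a point $w\in W_x$ fibers over the orbit $[x]_{\cR_X}$, and the fiber over $x'\in[x]_{\cR_X}$ is a single $\cR$-class' worth of points lying in $W_{x'}$, which after transporting by any group element carrying $x$ to $x'$ is identified with a $\cR_x$-class in $W_x$ (this transport is well defined up to the $G_x$-action, which is exactly what the fiberwise relation $\cR_x$ quotients out). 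So $\cR$ sits in a groupoid extension $1\to \cR_{\rm fib}\to \cR\to \cR_X\to 1$ in the appropriate measured-groupoid sense, where $\cR_{\rm fib}$ restricted to $W_x$ is $\cR_x$.

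The second step is the patching. By hypothesis (b) and the CFW/Zimmer characterization, for $\mu$-a.e.\ $x$ there is, for each finite $F\subseteq G_x$ and each $\eps>0$, a measurable assignment $w\mapsto m_w^x$ of means on $[w]_{\cR_x}$ that is $(F,\eps)$-invariant; a measurable selection argument (discarding a null set, exactly as in the proof of Theorem \ref{thm:IRSstable}) lets us choose these assignments to depend Borel-measurably on $x$. By hypothesis (a), there is similarly a measurable assignment $x\mapsto \mu_x$ of almost-invariant means on the orbits $[x]_{\cR_X}$. Now define a mean on $[w]_{\cR}$ by integrating the fiberwise means against the base mean: push the fiberwise mean $m_{w'}^{x'}$ forward along the identification of the fiber over $x'$ with $[w]_{\cR}\cap W_{x'}$, and then average over $x'\in[x]_{\cR_X}$ using $\mu_x$. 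Equivariance of the disintegration, $\rho_{gx}=g_*\rho_x$, guarantees this is well defined almost everywhere, and one checks that almost-invariance of the base means together with almost-invariance of the fiber means yields almost-invariance of the combined means under any finite subset of $G$: a group element $g$ moves the base point within its $\cR_X$-orbit (handled by $\mu_x$) and acts on the fiber by an element whose deviation from lying in the relevant stabilizer is again controlled within the orbit picture (handled by the $m^x$). Invoking the characterization again in the reverse direction gives $\rho$-amenability of $\cR$.

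The main obstacle I expect is making the fiberwise transport genuinely measurable and equivariant simultaneously — i.e., choosing the identifications of the fibers over different points $x'\in[x]_{\cR_X}$ with subsets of a fixed $W_x$ in a way that is coherent across the orbit and Borel in $x$. This is the same technical nuisance that appears in the proof of Theorem \ref{thm:IRSstable}, where a Borel transversal $X_n$ for a finite subequivalence relation and a measurable cocycle $\varphi_n$ were used; here one would exhaust $\cR_X$ by finite Borel subequivalence relations $\cR_X=\bigcup_n\cS_n$ with transversals $X_n$ and cocycles $\varphi_n:X\to G$ satisfying $\varphi_n(x)\theta_n(x)=x$, transport fiberwise data from $\theta_n(x)$ to $x$ via $\varphi_n(x)$, and take an inductive limit; the ambiguity by the stabilizer action is precisely absorbed by passing to $\cR_x$-classes rather than points. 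A secondary point to be careful about is that amenability of $\cR_X$ is only assumed $\mu$-a.e.\ and of $\cR_x$ only for $\mu$-a.e.\ $x$, so all selections must be made off a fixed $\mu$-null set, which is routine. (Alternatively, one could phrase the whole argument in terms of hyperfiniteness rather than amenability, using the Connes–Feldman–Weiss theorem to pass back and forth, and build an increasing sequence of finite subrelations of $\cR$ from finite subrelations of $\cR_X$ and of the $\cR_x$; the bookkeeping is essentially the same.)
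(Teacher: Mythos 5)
Your proposal is the right general shape of argument --- a two-stage averaging, first over the fibers and then over the base, i.e.\ the groupoid analogue of ``extension of amenable by amenable is amenable'' --- but it takes a genuinely different route from the paper, and as written it has one gap worth flagging. The paper works with Zimmer's fixed-point characterization of amenability rather than with almost-invariant means: given a Borel action $\alpha$ of $\cR$ on a bundle of compact metric spaces over $W$, it forms for each $x$ the set $\cL_x$ of measurable sections of probability measures over $W_x$ that are \emph{exactly} $(\cR_x,\alpha_x)$-invariant (nonempty by (b)), observes that $\cL=\bigcup_x\cL_x$ is a Borel bundle of compact convex sets over $X$ carrying a natural affine action of the base relation $\cS$, uses amenability of $\cS$ to produce an invariant measurable section $x\mapsto\omega_x\in\mathrm{Prob}(\cL_x)$, and takes barycenters. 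The transport problem you identify as the main obstacle simply evaporates in that formulation: the action of $\cS$ on $\cL$ is well defined because if $g_0x=g_1x$ then $g_1g_0^{-1}$ lies in a point stabilizer, and exactly invariant sections are by definition fixed by it. By contrast, in your version the fiberwise means are only $(F,\eps)$-invariant for a \emph{finite} $F\subseteq G_x$, so the ambiguity in transporting data between the fibers over $x$ and over $x'$ (two choices of $g$ with $gx=x'$ differ by an arbitrary element of $G_x$) is \emph{not} ``precisely absorbed by passing to $\cR_x$-classes'': it is controlled only for the finitely many elements of $F$, and $G_x$ may be infinite. To repair this along your lines you would either need to upgrade to exactly invariant measurable systems of means on the $\cR_x$-orbits (which amenable relations do admit, but at that point you are essentially rederiving the fixed-point formulation), or else manage the approximations using transversals and the cocycles $\rho_n$ exactly as in the proof of Theorem~\ref{thm:IRSstable}, which is substantially more bookkeeping than the paper's argument. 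The fixed-point route buys exactness at every stage and hence a one-line well-definedness check; your route, if completed, would be more hands-on but proves the same statement.
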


\begin{proof}
Let $\mathcal{S}$ and $\mathcal{R}$ be the orbit equivalence relations generated by the actions of $G$ on $X$ and $W$ respectively, and for each $x\in X$ let $\mathcal{R}_x$ be the restriction of $\mathcal{R}$ to $W_x$, so that $\mathcal{R}_x$ is generated by the action of $G_x$ on $W_x$. Let $\alpha : \mathcal{R}\curvearrowright  M$ be a Borel action of $\mathcal{R}$ by fiberwise homeomorphisms
on a Borel bundle of compact metric spaces over $W$. For each $x\in X$, by restricting to $\mathcal{R}_x$ we obtain the action $\alpha _x : \mathcal{R}_x\curvearrowright M_x$, where $M_x := \bigcup _{w\in W_x}M_w$ is a Borel bundle of compact metric spaces over $W_x$. For $w\in W$ let $P_w$ denote the space of probability measures on $M_w$, and for $x\in X$ let $P_x=\bigcup _{w\in W_x}P_w$.

Let $\mathcal{L}_x$ denote the collection of all measurable sections $W_x\rightarrow P_x$, $w\mapsto \lambda  _w\in P_w$ which are $\rho _x$-a.e.\ $(\mc{R}_x, \alpha _x )$-invariant, i.e., which satisfy $\lambda _{w_1}=\alpha _x (w_1,w_0)_*\lambda _{w_0}$ for a.e.\ $(w_1,w_0) \in \mathcal{R}_x$, and where we identify two such sections if they agree $\rho _x$-almost everywhere. The assumption (b) implies that the set $\mathcal{L}_x$ is nonempty for a.e.\ $x\in  X$. The set $\mathcal{L}_x$ is naturally a compact metrizable space since it can be identified with a weak$^*$-compact subset of the dual of the separable Banach space $L^1 ((W_x,\rho _x),(C(M_w ))_{w\in W_x})$ of all measurable assignments $w\mapsto f_w \in C(M_w)$ with $\int _W \| f _w \| _{\infty} \, d\rho _x (w) <\infty$ (where two such assignments are identified if they agree $\rho _x$-almost everywhere). Then $\mathcal{L}:=\bigcup _{x\in X}\mathcal{L}_x$ is naturally a Borel bundle of compact metric spaces over $X$, and we have a natural Borel action $\beta : \mathcal{S}\curvearrowright \mathcal{L}$ by fiberwise homeomorphisms with $\beta (x,gx) : \mathcal{L}_{gx}\rightarrow \mathcal{L}_{x}$ being given by $(\beta (x,gx ) \lambda )_w := \alpha (w,gw)_*\lambda _{gw}$ for $x\in X$, $g\in G$, and $w\in W_{x}$. This action is well defined, for if $g_0x = g_1x$ then $g_1g_0^{-1} \in G_{g_0x}$ so that $\alpha (g_1w,g_0w )_*\lambda _{g_0w} = \lambda _{g_1w}$ and hence $\alpha (w,g_1w)_*\lambda _{g_1w} = \alpha (w,g_0w)_*\lambda _{g_0w}$.

Since $\mathcal{S}$ is $\mu$-amenable, there exists a measurable section $x\mapsto \omega _x \in P(\mathcal{L}_x)$ which is $\mu$-a.e.\ $(\mathcal{S}, \beta )$-invariant.
Let $\nu _x\in \mathcal{L}_x$ denote the barycenter of $\omega _x$. Then $x\mapsto \nu _x$ satisfies $\beta (gx,x)\nu _x = \nu _{gx}$ for $\mu$-a.e.\ $x\in X$ and all $g$ with $(gx,x)\in \mathcal{S}$, and hence the map $w\mapsto (\nu _{p(w)} )_w$ is $\rho$-a.e.\ $(\mathcal{R},\alpha )$-invariant. This shows that $\mathcal{R}$ is $\rho$-amenable.
\end{proof}

\begin{theorem}\label{T-JS-stable}
Let $\Gamma \curvearrowright X$ be an action of a countable amenable group
on the Cantor set. Let $G$ be an infinite subgroup of $[[ \Gamma\curvearrowright X ]]$ which contains $\fA (\Gamma ,X)$.
Then $G$ is JS-stable.
\end{theorem}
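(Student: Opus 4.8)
The plan is to produce a stability sequence for some p.m.p.\ action of $G$ by disintegrating over the natural action of $G$ on the Cantor set itself, and then invoking Theorem~\ref{thm:IRSstable}. Concretely, I would first dispose of the case where there is a uniform finite bound on the size of $\Gamma$-orbits: by (i) of Lemma~\ref{lem:IAfullgroup} the group $[[\Gamma \curvearrowright X ]]$, and hence $G$, is locally finite, and an infinite locally finite group is easily seen to be JS-stable directly (it has an asymptotically central sequence of commuting involutions, which via a Bernoulli-type construction yields a stability sequence; alternatively it is amenable, infinite, and residually finite-index, so one applies the classical Jones--Schmidt criterion). So assume from now on that there is no such uniform bound, whence the closed $\Gamma$-invariant set $X_0$ of (ii) of Lemma~\ref{lem:IAfullgroup} is nonempty.

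Next I would set up the disintegration. Fix a $G$-invariant Borel probability measure $\mu$ on $X$ supported on $X_0$; such a measure exists because $X_0$ is a nonempty closed $\Gamma$-invariant (hence $[[\Gamma\curvearrowright X]]$-invariant, hence $G$-invariant) subset of the compact space $X$ and $G$ is amenable. For each $x\in X$ let $G_x$ denote the stabilizer of $x$ in $G$; note $G_x \supseteq G_{(x)}$, so by (iii) of Lemma~\ref{lem:IAfullgroup}, for $\mu$-a.e.\ $x$ (indeed every $x\in X_0$) the group $G_x$ contains two noncommuting asymptotically central sequences in $G_x$ (one checks that the sequences produced there, lying in subgroups of $\fA(\Gamma,X)$ supported on shrinking clopen neighborhoods of $x$, are asymptotically central in all of $G_{(x)}$ and a fortiori these give the hypothesis of Lemma~\ref{lem:Kec} for $H=G_x$). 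Now take $(Y,\nu) := (X,\mu) \ltimes (\,[0,1]^{?}\,)$ built fiberwise: over each $x$ put the generalized Bernoulli space $(Y_x,\nu_x) = ([0,1]^{G_x^*}, \lambda^{G_x^*})$ associated to the conjugation action $G_x \curvearrowright G_x^*$, assembled into a single standard Borel $G$-space $Y$ with equivariant projection $\pi:Y\to X$ (the fibers vary Borel-measurably in $x$, which requires a routine measurable selection of generators but no new idea). Then apply Theorem~\ref{thm:IRSstable}: hypothesis (b) holds by Lemma~\ref{lem:Kec} together with the asymptotic-centrality input just described, and hypothesis (a)---that the orbit equivalence relation of $G\curvearrowright(X,\mu)$ is $\mu$-hyperfinite---holds because $G$ is amenable, so every p.m.p.\ action of $G$ generates a hyperfinite equivalence relation. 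Theorem~\ref{thm:IRSstable} then yields a stability sequence for $G\curvearrowright(Y,\nu)$ and concludes that $G$ is JS-stable.

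The main obstacle, and the place where some care is genuinely needed, is the measurable assembly of the fiberwise Bernoulli actions into a single standard Borel $G$-space $Y\to X$ with the equivariance $\nu_{gx} = g_*\nu_x$, since the groups $G_x$ and even the index sets $G_x^*$ depend on $x$. One clean way around this is to avoid varying index sets altogether: instead of $[0,1]^{G_x^*}$, use the fixed space $(Y_x,\nu_x) := ([0,1]^{G}, \lambda^{G})$ with $G_x$ acting by the restriction to $G_x$ of the conjugation (Bernoulli) action $G\curvearrowright [0,1]^{G}$, i.e.\ $Y := X\times [0,1]^{G}$ with the diagonal $G$-action $g\cdot(x,\xi) = (gx, g\cdot\xi)$ where $(g\cdot\xi)(h) = \xi(g^{-1}hg)$. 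The disintegration over $\pi(x,\xi)=x$ is then literally constant, equivariance is automatic, and one only has to recheck that Lemma~\ref{lem:Kec} still applies fiberwise --- but the sequences $c_n,d_n$ from Lemma~\ref{lem:IAfullgroup}(iii) are supported on disjoint clopen sets, so they have infinite order or at least generate, together with conjugates, enough room that the constant maps $S_n(\xi)=c_n$ and the sets $D_n=\{\xi : \xi(d_n)\in[0,\tfrac12]\}$ still witness a stability sequence for $G_x\curvearrowright([0,1]^{G},\lambda^{G})$, by the very same computation as in the proof of Lemma~\ref{lem:Kec} (nothing there used that the Bernoulli base was indexed by $H^*$ rather than a larger set on which the conjugation action is free on the relevant coordinates $d_n$). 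With this reformulation the Borel-measurability issues evaporate and the argument is exactly the template above.
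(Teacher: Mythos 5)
Your overall template (dispose of the bounded-orbit case, put an invariant measure concentrated on $X_0$, build a fiberwise Bernoulli extension over $X$, and feed it to Theorem~\ref{thm:IRSstable} via Lemma~\ref{lem:Kec}) is the same as the paper's, but there are two genuine problems. The first is minor but symptomatic: you justify hypothesis (a) of Theorem~\ref{thm:IRSstable} by saying that $G$ is amenable. It is not --- only $\Gamma$ is assumed amenable, and the whole interest of the theorem is precisely the case where $G\supseteq \fA (\Gamma ,X)$ is nonamenable. The correct reason is that the orbit equivalence relation of $G\curvearrowright (X,\mu )$ is contained in (indeed coincides with) that of $\Gamma$, which is $\mu$-hyperfinite by Ornstein--Weiss because $\Gamma$ is amenable.

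The second problem is the real gap. Lemma~\ref{lem:IAfullgroup}(iii) produces two noncommuting sequences that are asymptotically central in $G_{(x)}$, the group of elements fixing a \emph{neighborhood} of $x$. The stabilizer $G_x$ of the fiber over $x$ can be strictly larger: an element can fix $x$ without fixing any neighborhood of it, and such an element can move the shrinking annuli $U_n\setminus U_{n+1}$ supporting $c_n$ and $d_n$, so there is no reason it should commute with all but finitely many of the $c_n$. Your ``a fortiori'' goes the wrong way: asymptotic centrality in the larger group $G_x$ is a strictly \emph{stronger} condition than asymptotic centrality in $G_{(x)}$, so the hypothesis of Lemma~\ref{lem:Kec} for $H=G_x$ is not verified and hypothesis (b) of Theorem~\ref{thm:IRSstable} is not established for your $Y$. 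This is exactly where the paper has to work: it first treats the special case $G_x=G_{(x)}$ for all $x$ (where your argument is essentially correct), and in the general case it replaces $(X,\mu )$ by the relatively independent joining of $(X,\mu )$ with a realization (via \cite{AGV12,CP12}) of the invariant random subgroup $x\mapsto G_{(x)}$, which forces the point stabilizers of the base action to be exactly $G_{(x)}$; the hyperfiniteness of the new base relation then requires a further argument (Lemma~\ref{L-amenable OE} together with Connes--Feldman--Weiss, using amenability of the quotients $G_x /G_{(x)}$). Your reformulation with the fixed index set $[0,1]^G$ does not avoid any of this: the fiber actions are still actions of $G_x$, and condition (1) of Definition~\ref{def:stabseq} still requires $S_n\equiv c_n$ to asymptotically commute with every element of $G_x$, not merely of $G_{(x)}$.
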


\begin{proof}
If there is a uniform finite bound on the size of $\Gamma$-orbits then $G$ is locally finite by (i) of Lemma \ref{lem:IAfullgroup}, hence JS-stable. We may therefore assume that no such bound exists and hence, by (ii) of Lemma \ref{lem:IAfullgroup}, that the set $X_0$ from Lemma \ref{lem:IAfullgroup} is nonempty. Since $X_0$ is closed and $\Gamma$-invariant, and $\Gamma$ is amenable, there exists a $\Gamma$-invariant Borel probability measure $\mu$ on $X$ with $\mu (X_0)=1$. The measure $\mu$ is then invariant under the group $G$ as well. The orbit equivalence relation of the action $G\curvearrowright (X,\mu )$ is exactly the orbit equivalence relation of $\Gamma$, and hence is $\mu$-hyperfinite by \cite{OrnWei87}. We break the remainder of the proof into two cases.

Case 1: $G_x=G_{(x)}$ for each $x\in X$. By (iii) of Lemma~\ref{lem:IAfullgroup}, for $\mu$-almost every $x\in X$ (namely, every $x\in X_0$) the group $H=G_{x}$ satisfies the hypothesis of Lemma \ref{lem:Kec}. For each $x\in X$ let $(Y_x,\nu _x) = ([0,1]^{G_x^*}, \lambda ^{G_x^*})$. Let $Y := \bigsqcup _{x\in X}Y_x$, let $\pi :Y\rightarrow X$ denote the projection, and equip $Y$ with the $\sigma$-algebra generated by $\pi$ along with the maps $y\mapsto y(g)$ from $\pi ^{-1}(\{ x\, : \, g\in G_x \} )$ to $[0,1]$ for $g\in G$, which is easily seen to be standard Borel (since $Y$ can be identified with a Borel subset of $X\times ([0,1] \cup \{ 2 \} )^{G^*}$). Then $\nu := \int _X \nu _x \, d\mu$ defines a Borel probability measure on $Y$. We have a natural action $G\curvearrowright (Y,\nu )$: for $g\in G$ and $y\in Y_x$, we define $gy\in Y_{gx}$ by $(gy)(h) := y (g^{-1}hg)$. Observe that $g_*\nu _x = \nu _{gx}$, so this action preserves the measure $\nu$, and it makes $\pi$ a $G$-equivariant map, with $x\mapsto \nu _x$ the associated disintegration. In addition, for each $x\in X_0$, this action restricted to $G_x\curvearrowright (Y_x,\nu _x )$ coincides with the generalized Bernoulli action associated to conjugation $G_x\curvearrowright G_x^*$, and so by Lemma \ref{lem:Kec} it admits a stability sequence. Therefore, by Theorem \ref{thm:IRSstable} the action $G\curvearrowright (Y,\nu )$ admits a stability sequence and hence $G$ is JS-stable.

Case 2: The general case. Let $\mu _0$ denote the measure on the space of all subgroups of $G$ obtained as the pushforward of the measure $\mu$ under the map $x\mapsto G_{(x)}$. By \cite{AGV12, CP12}, we may find a p.m.p.\ action $G\curvearrowright (W,\rho )$ with the property that $\mu _0$ is the pushforward of the measure $\rho$ under the stabilizer map $w \mapsto G_w$. We let $G\curvearrowright (\tilde{X},\tilde{\mu}) := (X,\mu )\otimes _{\mu _0} (W,\rho )$ denote the relatively independent joining of the actions on $X$ and $W$. Thus $\tilde{X} =\{ (x,w)\in X\times W \, : \, G_{(x)}= G_w \}$, and the stabilizer of $(x,w)\in \tilde{X}$ is $G_{(x,w)}=G_x\cap G_w= G_x\cap G_{(x)}=G_{(x)}$. Let $\tilde{\mc{S}}$ denote the orbit equivalence relation of the action of $G$ on $\tilde{X}$. Since the orbit equivalence relation of the action $G\curvearrowright (X,\mu )$ is $\mu$-amenable
by the amenability of $\Gamma$ and the groups $G_x/G_{(x)}$ for $x\in X$ are amenable, we deduce
that $\tilde{\mc{S}}$ is $\tilde{\mu}$-amenable by Lemma~\ref{L-amenable OE} and consequently $\tilde{\mu}$-hyperfinite
by the Connes--Feldman--Weiss theorem \cite{CFW81}. The action of $G$ on $(\tilde{X},\tilde{\mu})$ thus
satisfies all of the properties which allow us to proceed exactly as in Case 1
and conclude that $G$ is JS-stable.
\end{proof}

\section{Property Gamma}\label{S-Gamma}

A II$_1$ factor $M$ with trace $\tau$ is said to have {\it property Gamma}
if for every $\eps > 0$ and finite set $\Omega\subseteq M$
there exists a unitary $u\in M$ with $\tau (u) = 0$ such that $\| [a,u] \|_2 < \eps$ for all $a\in\Omega$.
As the proof of Theorem~2.1 in \cite{Con76} shows, this is equivalent to the
existence, for every $\eps > 0$ and finite set $\Omega\subseteq M$,
of a projection $p\in M$ such that $\tau (p) = \frac12$ and $\| [a,p] \|_2 < \eps$ for all $a\in\Omega$.
One may also equivalently require that $|\tau (p) - \frac12 | < \eps$, as one can replace $p$
by a subprojection or superprojection with trace $\frac12$ and adjust $\eps$ accordingly.
We say that an ICC countable discrete group $G$ has {\it property Gamma} if its group von Neumann algebra $\sL G$
(which is a II$_1$ factor in this case by the ICC condition) has property Gamma.

We begin by observing the following.

\begin{proposition}\label{P-ICC}
Let $\Gamma\curvearrowright X$ be an action of a group on the Cantor set. Assume that the set of points whose orbit contains at least four points is dense in $X$. Then every subgroup of $[[\Gamma\curvearrowright X ]]$ containing $\fA (\Gamma, X)$ is ICC.
\end{proposition}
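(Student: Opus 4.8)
The plan is to show that every nontrivial conjugacy class of a subgroup $G \subseteq [[\Gamma \curvearrowright X]]$ with $\fA(\Gamma,X) \subseteq G$ is infinite, by producing, for a given $1 \neq g \in G$, infinitely many distinct conjugates $h g h^{-1}$ with $h$ ranging over $\fA(\Gamma,X)$. The starting observation is that since $g$ is a nontrivial homeomorphism of $X$, there is a point $x_0 \in X$ with $g x_0 \neq x_0$; choosing a clopen set $A$ that is a small enough neighborhood of $x_0$, we can arrange $A \cap gA = \emptyset$ (by continuity of $g$ and the fact that $X$ is zero-dimensional). The density hypothesis lets us shrink further so that some point $y \in A$ has orbit containing at least four points — in fact, passing to a smaller clopen set, we can find four distinct points $y_1, y_2, y_3, y_4$ in a single $\Gamma$-orbit that lie in a clopen set $B \subseteq A$ together with small pairwise-disjoint clopen neighborhoods $B_i \ni y_i$ inside $B$, and these can be taken disjoint from their $g$-translates as well. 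This is the configuration on which we can act with copies of $\fS_d$ inside $\fA(\Gamma,X)$.

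The key step is then a standard "moving the support" argument: for each such configuration, and for $h \in \fA(\Gamma,X)$ supported inside $B \cup gB$ (or just inside $B$), the conjugate $h g h^{-1}$ differs from $g$ precisely on the region where $h$ fails to commute with $g$, and by choosing $h$ to be a $3$-cycle permuting three of the tiny clopen pieces $B_1, B_2, B_3$ — which lies in $\fA(\Gamma,X)$ by the definition of the alternating group via homomorphisms from $\fA_3$ — one checks that $h g h^{-1} \neq g$ because $h g h^{-1}$ sends some point of $B_i$ to a place where $g$ does not. To get \emph{infinitely} many distinct conjugates rather than just one, I would iterate: choose a properly decreasing sequence of clopen neighborhoods, or more simply, observe that the point $x_0$ can be taken in the closed invariant set $X_0$ of Lemma~\ref{lem:IAfullgroup}(ii) when orbits are unbounded (and when orbits are uniformly bounded the group is locally finite by Lemma~\ref{lem:IAfullgroup}(i), but locally finite ICC groups still exist — actually here one must argue directly), giving infinitely many disjoint clopen "slots" on which to perform independent $3$-cycles, each producing a conjugate of $g$ that agrees with $g$ outside that slot but disagrees inside, hence all distinct. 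A cleaner route, avoiding case analysis, is to note that it suffices to find, for each $N$, some $h \in \fA(\Gamma,X)$ with $hgh^{-1}$ distinct from $N$ previously chosen conjugates, which follows by always working in a fresh clopen region disjoint from the finitely many supports already used — such a region exists because $g$ has infinitely many "active" points or, if $g$ has finite support, because $X$ is perfect and the density hypothesis supplies infinitely many four-point orbit configurations.

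The main obstacle I anticipate is the case where $g$ has \emph{very small} support, or even where $\Gamma$-orbits are uniformly bounded so that one cannot find four points of a single orbit inside an arbitrarily small clopen set: the density hypothesis only guarantees four-point orbits are dense, not that their constituent points cluster. To handle this I would argue as follows: given $1 \neq g \in G$, fix $x_0$ with $gx_0 \neq x_0$ and a clopen $A \ni x_0$ with $A \cap gA = \emptyset$; by density there is a point $z \in A$ whose orbit $\Gamma z$ has at least four points, say $z = z_1, z_2, z_3, z_4$, and the elements of $[[\Gamma \curvearrowright X]]$ realizing $3$-cycles on tiny clopen neighborhoods of $z_1, z_2, z_3$ lie in $\fA(\Gamma,X)$ \emph{regardless of where} $z_2, z_3$ are, since the alternating-group elements are built from homomorphisms of $\fA_3$ with the clopen sets $A_i$ \emph{not required to lie near each other}. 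Conjugating $g$ by such a $3$-cycle $h$ changes the value of $g$ at points of the clopen neighborhood of $z_1 \in A$ (since $g$ sends that neighborhood into $gA$, disjoint from $A$, while $hgh^{-1}$ sends it toward the neighborhood of $z_2$), so $hgh^{-1} \neq g$; and by choosing progressively smaller neighborhoods of $z_1$ one gets infinitely many distinct such conjugates. The verification that these conjugates are pairwise distinct, and the bookkeeping to ensure the chosen clopen neighborhoods and their $g$-images stay disjoint from one another across the infinitely many steps, is the routine-but-careful part of the argument.
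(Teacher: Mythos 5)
Your overall strategy coincides with the paper's: for a nonidentity $g$ in the subgroup, produce infinitely many distinct conjugates $hgh^{-1}$ with $h$ a three-cycle in $\fA (\Gamma ,X)$ built on a four-point orbit configuration meeting the (open, nonempty) support of $g$. You also correctly identify the two key points: the dense open set $U$ of points with orbit size at least four meets $\supp (g)$, and the clopen sets carrying a copy of $\fA_3$ need not be near one another, which disposes of your worry that the four orbit points might not cluster. Your verification that a single conjugate differs from $g$ is essentially right though loosely stated (the clean computation is $hgh^{-1}(z_2)=h(gz_1)=gz_1\neq gz_2$, once the neighborhoods of $z_2 ,z_3$ are chosen to avoid $gz_1$, which is possible because the orbit supplies a fourth point to swap in; your phrase about $hgh^{-1}$ sending the neighborhood of $z_1$ ``toward $z_2$'' has the conjugation turned around). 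The case discussion about uniformly bounded orbits is unnecessary: the hypothesis is only density of $U$, and the argument applies uniformly.

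The genuine gap is the step you defer as ``routine-but-careful'': why the infinitely many conjugates are \emph{pairwise distinct}. Of the two mechanisms you float, the one based on progressively smaller neighborhoods of the same point $z_1$ is not routine and is doubtful as stated --- all the resulting conjugates agree with $g$ off a fixed region and send $z_2$ to the same point $gz_1$, so distinguishing them forces you into the annular sets between successive neighborhoods, where the outcome depends on the particular group elements implementing the three-cycles. Your other mechanism, fresh pairwise disjoint regions, is the right one and is what the paper uses, but you only argue that each new conjugate differs from $g$, not that it differs from the \emph{previously constructed conjugates}. The paper closes this with a test-point argument: choose pairwise disjoint four-point sets $F_n\subseteq U\cap \supp (g)$ with $x_n ,gx_n\in F_n\subseteq \Gamma x_n$ (note $gx\in\Gamma x$ automatically since $g$ lies in the topological full group), and choose $h_n\in\fA (\Gamma ,X)$ fixing $x_i$ for all $i\leq n$ and $gx_i$ for all $i<n$ while moving $gx_n$; then $h_ngh_n^{-1}x_i=gx_i$ for $i<n$ whereas $h_igh_i^{-1}x_i=h_igx_i\neq gx_i$, so the conjugates are pairwise distinct. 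Supplying this (or an equivalent disjoint-support argument showing the $n$-th conjugate agrees with $g$ wherever the earlier ones disagree with it) is what your proposal still needs.
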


\begin{proof}
Let $U$ denote the set of points whose orbit contains at least four points, so that $U$ is open, and by hypothesis it is dense.  Given a nonidentity $g \in [[\Gamma\curvearrowright X ]]$, it suffices to show that $\{ hgh^{-1} : h\in \fA (\Gamma ,X ) \}$ is infinite. The support $\mathrm{supp}(g)$, of $g$, is a nonempty open set, so we may find an infinite sequence $(F_n)_{n\in \N}$ of pairwise disjoint subsets of $U\cap \mathrm{supp}(g)$ of size $|F_n|=4$ along with $x_n\in F_n$, $n\in \N$, such that $gx_n \in F_n$ and $F_n\subseteq \Gamma x_n$ for every $n\in \N$. This ensures that for each $n\in \N$ we may find some $h_n \in \fA (\Gamma , X)$ satisfying $h_nx_i = x_i$ for all $i\leq n$, $h_ngx_i = gx_i$ for all $i< n$, and $h_ngx_n \neq gx_n$. Then for each $i<n$ the conjugates $h_igh_i^{-1}$ and $h_ngh_n^{-1}$ are distinct since $h_igh_i^{-1}x_i = h_i gx_i \neq gx_i$, whereas $h_ngh_n^{-1}x_i = h_ngx_i = gx_i$.
\end{proof}

Next we establish Lemma~\ref{L-half}, which is a local version of the implication
(i)$\Rightarrow$(ii) of Theorem~1.2 in \cite{KecTsa08} and follows from the same argument,
which we reproduce here. See Section~4 of \cite{KecTsa08} for more details.

Given a set $Y$ and an inclusion of nonempty sets $L\subseteq K$, we write $\pi_L$ for the
coordinate projection map $Y^K \to Y^L$, or simply $\pi_q$ if $L$ is a singleton $\{ q \}$.
We view $\Sym (K)$ as acting on $Y^K$
via left shifts, i.e., $(\sigma y)(t) = y(\sigma^{-1} t)$ for all $y\in Y^K$, $t\in K$,
and $\sigma\in\Sym (K)$, where we use the action notation $t\mapsto\sigma^{-1} t$
for permutations $\sigma$ of $K$.

\begin{lemma}\label{L-CLT}
Let $I = [-1,1]$ and let $\nu$ be a Borel probability measure on $I$ which is centered at $0$
and does not give $0$ full measure.
Let $\eps > 0$. Then there is a $\delta > 0$ such that if $Q = J\sqcup K\sqcup L$ is
a partition of a finite set $Q$ with $|J| \geq (1-\delta )|Q|$ and $|K|=|L|$ then
the three random variables
\begin{align*}
U = \sum_{q\in J} \pi_q ,
\hspace*{4mm}
V = \sum_{q\in K} \pi_q ,
\hspace*{4mm}
W = \sum_{q\in L} \pi_q
\end{align*}
on $I^Q$ with the product measure $\nu^Q$ satisfy
\begin{align*}
\prob (W \leq - U < V) < \eps .
\end{align*}
\end{lemma}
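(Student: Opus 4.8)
The plan is to read $U,V,W$ as sums of independent copies of $\nu$ and run a quantitative central limit theorem. Put $\sigma^2 := \int_I x^2\,d\nu$, which is $>0$ since $\nu$ is centered at $0$ but not concentrated there. Then $U$ is a sum of $|J|$ i.i.d.\ mean-zero variables of variance $\sigma^2$, while $V$ and $W$ are independent of $U$ and of each other, each a sum of $m:=|K|=|L|$ such variables; in particular $\var(U)=\sigma^2|J|$ and $\var(V)=\var(W)=\sigma^2 m$. The hypothesis $|J|\ge(1-\delta)|Q|=(1-\delta)(|J|+2m)$ rearranges to $m\le\eta|J|$ where $\eta=\eta(\delta):=\tfrac{\delta}{2(1-\delta)}$, so $\eta$ is the parameter controlling how small things are.

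First reduce away the trivial range: if $m=0$ then $V=W=0$ identically and the event $\{W\le -U<V\}$ is empty, so its probability is $0<\eps$; hence we may assume $m\ge 1$, which forces $|J|\ge 1/\eta$. Thus, once $\delta$ is small, the hypothesis itself makes $|J|$ large and $m/|J|\le\eta$ small, which is exactly the regime where the estimates below bite. Next rewrite the event: since $W\le -U$ iff $U\le -W$ and $-U<V$ iff $U>-V$, the event equals $\{-V<U\le -W\}$, which in particular forces $U$ to lie in an interval with endpoints $-V,-W$ of length $V-W$. Conditioning on $(V,W)$ — legitimate as $U$ depends on disjoint coordinates — and truncating at a level $T=T(\delta)$ to be chosen, one gets
\[
\prob(W\le -U<V)\ \le\ \prob(|V|>T)+\prob(|W|>T)+\sup_{|a-b|\le 2T}\prob\bigl(U\in(a,b]\bigr).
\]
The first two terms are each at most $\sigma^2 m/T^2$ by Chebyshev. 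For the third, the Berry--Esseen inequality (the third absolute moment of $\nu$ is at most $1$), or equivalently a standard concentration-function estimate, gives $\prob(U\in(a,b])\le \frac{b-a}{\sigma\sqrt{2\pi|J|}}+\frac{C_\nu}{\sqrt{|J|}}$ with $C_\nu$ depending only on $\nu$, so the third term is at most $\frac{2T}{\sigma\sqrt{2\pi|J|}}+\frac{C_\nu}{\sqrt{|J|}}$.

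Finally optimize the cutoff. Choose $T$ of order $\sqrt m$ but diverging relative to it, e.g.\ $T=\eta^{-1/4}\sqrt m$. Using $m\le\eta|J|$ and $|J|\ge 1/\eta$, the three contributions become respectively $2\sigma^2\eta^{1/2}$, $\tfrac{2}{\sigma\sqrt{2\pi}}\,\eta^{1/4}$, and $C_\nu\eta^{1/2}$ — each a fixed power of $\eta$ times a constant depending only on $\nu$, and all tending to $0$ as $\delta\to 0$. So it suffices to pick $\delta$ small enough (depending on $\eps$ and $\nu$) that this sum is $<\eps$.

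I expect the only real obstacle to be uniformity: the bound must hold for every finite partition $Q=J\sqcup K\sqcup L$ simultaneously, with no lower bound on $|Q|$ assumed, so one cannot simply appeal to the asymptotic CLT. The two points that make it work are (i) that when $\delta$ is small the hypothesis forces either $m=0$ (trivial) or $|J|$ large, and (ii) that one must use an explicit Berry--Esseen-type error term, and place the truncation level $T$ in the window between the scale $\sqrt m$ of $V,W$ and the much larger scale $\sqrt{|J|}$ of $U$.
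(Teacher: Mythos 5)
Your proof is correct, but it follows a genuinely different route from the paper's. The paper argues by contradiction and compactness: assuming no $\delta$ works, it extracts a sequence of counterexample partitions, passes to a subsequence along which $m_n=|K_n|$ is either bounded or tends to infinity, and in each case invokes the asymptotic central limit theorem (plus the fact that $\lambda_n V_n/(\sigma\sqrt{m_n})\to 0$ in probability when $\lambda_n=\sqrt{m_n/M_n}\to 0$) to force the probability of $\{W_n\le -U_n<V_n\}$ to zero, contradicting the assumed lower bound $\eps$. You instead give a direct quantitative bound: rewriting the event as $\{-V<U\le -W\}$, conditioning on $(V,W)$, truncating at $T=\eta^{-1/4}\sqrt{m}$, and controlling the pieces by Chebyshev ($\sigma^2 m/T^2$) and a Berry--Esseen concentration estimate for $U$; your bookkeeping ($m\le\eta|J|$ with $\eta=\delta/(2(1-\delta))$, the trivial case $m=0$, and the consequence $|J|\ge 1/\eta$ when $m\ge 1$) is accurate, and the three resulting contributions $2\sigma^2\eta^{1/2}$, $\tfrac{2}{\sigma\sqrt{2\pi}}\eta^{1/4}$, and $C_\nu\eta^{1/2}$ do all vanish as $\delta\to 0$. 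What your approach buys is an explicit, effective dependence of $\delta$ on $\eps$ and $\nu$, and it avoids the subsequence extraction and the two-case split on the behavior of $m_n$; what it costs is the heavier quantitative input (Berry--Esseen and the choice of the intermediate truncation scale between $\sqrt{m}$ and $\sqrt{|J|}$), where the paper gets by with only the classical CLT and soft convergence arguments. Both correctly exploit the same two structural facts: independence of $U$ from $(V,W)$, and the fact that the hypothesis forces $U$ to live on a much larger scale than $V$ and $W$.
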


\begin{proof}
Suppose that no such $\delta$ as in the lemma statement exists.
Then for every $n\in\N$ we can find a partition $Q_n = J_n \sqcup K_n \sqcup L_n$ of a finite set $Q_n$
such that $|J_n | \geq (1-1/n)|Q_n |$ and the three random variables
\begin{align*}
U_n = \sum_{q\in J_n} \pi_q ,
\hspace*{4mm}
V_n = \sum_{q\in K_n} \pi_q ,
\hspace*{4mm}
W_n = \sum_{q\in L_n} \pi_q
\end{align*}
on $I^Q$ with the product measure $\nu^Q$ satisfy
\begin{align*}
\prob (W \leq - U < V) \geq \eps .
\end{align*}
The random variables $U_n$, $V_n$, and $W_n$ each have expectation zero. We also note,
writing $\sigma^2 = \int_I x^2 d\nu (x)$, $M_n = |J_n |$, and $m_n = |K_n | = |L_n |$, that
the variance of $U_n$ is equal to $M_n \sigma^2$ while the variance of each of $V_n$ and $W_n$ is
equal to $m_n \sigma^2$.

By passing to a subsequence, we may assume that the sequence $(m_n )$ is either bounded or tends to $\infty$.
If it has a bound $R$ then $|V_n | , |W_n | \leq R$ for all $n$ and so
\begin{align*}
\prob (W_n \leq -U_n < V_n )
&\leq \prob (-R < U \leq R) \\
&= \prob \big(-R/(\sigma \sqrt{M_n} ) < U_n /(\sigma \sqrt{M_n} ) \leq R/(\sigma \sqrt{M_n} )\big) . \notag
\end{align*}
Now $R/(\sigma \sqrt{M_n} ) \to 0$ and $U_n /(\sigma \sqrt{M_n} )$ converges in distribution
(to a standard normal random variable) by the central limit theorem, and so by a simple approximation argument
we see that the last expression in the above display must converge to zero. We may therefore assume
that $m_n \to \infty$.

By the central limit theorem, the random variables $U_n /(\sigma\sqrt{M_n} )$,
$V_n /(\sigma\sqrt{m_n} )$, and $W_n /(\sigma\sqrt{m_n} )$ all converge in distribution
to a standard normal random variable $Z$.
Setting $\lambda_n = \sqrt{m_n /M_n}$ we have $\lambda_n \to 0$ and
so $\lambda_n V_n /(\sigma\sqrt{m_n} )$ and $\lambda_n W_n /(\sigma\sqrt{m_n} )$
converge in probability to zero, as is straightforward to check.
But then, like in the previous case, the quantity
\begin{align*}
\prob (W_n \leq -U_n < V_n )
= \prob \big(\lambda_n W_n /(\sigma\sqrt{m_n} ) \leq -U_n /(\sigma\sqrt{M_n} ) < \lambda_n V_n /(\sigma\sqrt{m_n} )\big)
\end{align*}
must converge to zero. This yields a contradiction, thus establishing the lemma.
\end{proof}

\begin{lemma}\label{L-half}
Let $(Y,\nu )$ be a standard probability space which does give any singleton full measure.
Let $\eps > 0$. Then there exists a $\delta > 0$ such that
for every nonempty finite set $F$ and
every $E\subseteq F$ with $|E|\geq (1-\delta )|F|$ there exists a set $A\subseteq Y^F$
with $A = \pi_E^{-1} (\pi_E (A))$ such that
$|\nu^F (A) - \frac12 | < \eps$ and $\nu^F (\sigma A \triangle A) < \eps$
for all $\sigma\in\Sym (F)$.
\end{lemma}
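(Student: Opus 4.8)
The plan is to reduce Lemma~\ref{L-half} to Lemma~\ref{L-CLT} by encoding the desired set $A\subseteq Y^F$ as the preimage of an event defined in terms of sums of i.i.d.\ real random variables, after first reducing the general standard probability space $(Y,\nu)$ to the interval model of Lemma~\ref{L-CLT}. First I would fix a measurable function $f:Y\to I=[-1,1]$ such that the pushforward measure $\nu' := f_*\nu$ is centered at $0$ and does not give $0$ full measure; such an $f$ exists because $(Y,\nu)$ gives no singleton full measure, so one can split $Y$ into two pieces of positive measure, send one to a positive constant and the other to a negative constant chosen to make the mean zero (this even produces a two-valued $f$, which is all we need). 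Coordinatewise application gives a measure-preserving, $\Sym(F)$-equivariant map $f^F : (Y^F,\nu^F)\to (I^F,(\nu')^F)$, so it suffices to produce the set $A$ inside $I^F$ with respect to $(\nu')^F$ and pull back.

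Next, given $\eps>0$, let $\delta>0$ be the constant produced by Lemma~\ref{L-CLT} for the measure $\nu'$ and the error $\eps$ (we may also shrink $\delta$ so that $\delta < \eps$). Now let $F$ be a nonempty finite set and $E\subseteq F$ with $|E|\geq(1-\delta)|F|$. Split the complement $F\setminus E$ into two sets $K,L$ with $\bigl||K|-|L|\bigr|\le 1$; after moving at most one coordinate of $E$ into the smaller of the two we may arrange $|K|=|L|$ at the cost of replacing $E$ by a slightly smaller set $J\subseteq E$ still satisfying $|J|\ge(1-\delta)|F|$ (here the flexibility $\delta<\eps$ and the fact that we only need $|\nu^F(A)-\tfrac12|<\eps$ rather than equality is used). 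With $Q=F$, $J$, $K$, $L$ as in Lemma~\ref{L-CLT}, form $U=\sum_{q\in J}\pi_q$, $V=\sum_{q\in K}\pi_q$, $W=\sum_{q\in L}\pi_q$ on $I^F$, and define
\[
A := \{ y\in I^F \, : \, U(y) \ge 0 \} .
\]
Since $U$ depends only on the $J$-coordinates and $J\subseteq E$, we have $A=\pi_E^{-1}(\pi_E(A))$ as required. The random variable $U$ has a symmetric-up-to-small-error distribution: writing $U = \pi_q + U_q$ where $U_q$ is the sum over $J\setminus\{q\}$, one sees $\prob(U=0)$ is small when $|J|$ is large (again by a CLT/anticoncentration estimate, which is already implicit in Lemma~\ref{L-CLT}), and $U$ is a sum of symmetric i.i.d.\ variables hence itself symmetric about $0$; therefore $\prob(U\ge 0)$ is within $\eps$ of $\tfrac12$, giving $|\nu^F(A)-\tfrac12|<\eps$.

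It remains to bound $\nu^F(\sigma A\,\Delta\,A)$ for $\sigma\in\Sym(F)$. The point is that $\sigma$ permutes the coordinates, so $\sigma A = \{ y : \sum_{q\in \sigma J}\pi_q(y)\ge 0\}$, and the symmetric difference $A\,\Delta\,\sigma A$ is contained in the event that the two sums $\sum_{q\in J}\pi_q$ and $\sum_{q\in \sigma J}\pi_q$ have opposite signs (or one vanishes). Because $|J|\ge(1-\delta)|F|$, the sets $J$ and $\sigma J$ overlap in all but at most $2\delta|F|$ coordinates; writing $U = S + V'$ and $\sigma$-image $= S + W'$ where $S$ is the sum over the common coordinates $J\cap\sigma J$ and $V',W'$ are sums over the at most $\delta|F|$ private coordinates on each side, the event of opposite signs is exactly (a relabelling of) the event $\{W'\le -S < V'\}$ of Lemma~\ref{L-CLT} applied to the partition $F = (J\cap\sigma J)\sqcup(\text{private})\sqcup(\text{private}')$, padded out so the two private blocks have equal size. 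Lemma~\ref{L-CLT} then gives $\nu^F(\sigma A\,\Delta\,A)<\eps$, uniformly in $\sigma$. Pulling back along $f^F$ yields the set $A\subseteq Y^F$ with the stated properties.

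\medskip

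The main obstacle I anticipate is the bookkeeping in the last step: matching the combinatorial situation ($J$ versus $\sigma J$, with an uncontrolled permutation $\sigma$) to the clean three-block partition hypothesis of Lemma~\ref{L-CLT}, in particular arranging that the two ``private'' blocks can be taken of equal cardinality without spoiling the lower bound on the common block, and making sure the single anticoncentration estimate for $\prob(U=0)$ is either extracted from the proof of Lemma~\ref{L-CLT} or proved separately. Everything else is routine manipulation of product measures and the CLT.
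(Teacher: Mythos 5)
Your overall strategy --- push $(Y,\nu)$ forward to a centered measure on $[-1,1]$, take $A$ to be a half-space event depending only on the $E$-coordinates, and control $\nu^F(\sigma A\Delta A)$ by feeding the three sums over $\sigma E\cap E$, $E\setminus\sigma E$ and $\sigma E\setminus E$ into Lemma~\ref{L-CLT} --- is exactly the paper's. Several of your complications are unnecessary: there is no need to pre-split $F\setminus E$ into equal halves or to trim $E$ down to a subset $J$ (the paper simply takes $A=\{y:\sum_{s\in E}y_s>0\}$), and no ``padding'' of the two private blocks is ever required, since $|E\setminus\sigma E|=|\sigma E\setminus E|$ automatically because $\sigma$ is a bijection of $F$. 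With the strict inequality in the definition of $A$ one gets $A\setminus\sigma A=\{W\le -U<V\}$ on the nose, and $\nu^F(\sigma A\Delta A)=2\,\nu^F(A\setminus\sigma A)$ because swapping the two equal-size blocks is measure preserving; the only bookkeeping needed is to run Lemma~\ref{L-CLT} at $\eps/2$ and output $\delta/2$, so that $|\sigma E\cap E|\ge|F|-2|F\setminus E|\ge(1-\delta)|F|$, a step your write-up does not account for.

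The one genuinely incorrect step is your justification of $|\nu^F(A)-\tfrac12|<\eps$. A centered measure on $[-1,1]$ need not be symmetric, and your own two-valued $f$ (mass $p$ at $a>0$ and mass $1-p$ at $b<0$ with $pa+(1-p)b=0$) is symmetric only when $p=\tfrac12$, which cannot always be arranged --- take $\nu$ with two atoms of masses $2/3$ and $1/3$. So ``$U$ is a sum of symmetric i.i.d.\ variables hence symmetric about $0$'' fails, and the trace estimate must instead come from the central limit theorem: $\prob(U>0)\to\tfrac12$ as $|E|\to\infty$ because the limiting normal puts no mass at $0$. Note that this only yields the estimate when $|F|$ is large; the paper's own proof silently omits this point entirely, and in the application $F$ is a large F{\o}lner set, where the CLT argument does apply. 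In summary: same method, essentially correct handling of the commutator estimate modulo minor bookkeeping, but the symmetry claim must be replaced by a CLT argument.
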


\begin{proof}
We may assume that $Y = [-1,1]$ and that $\nu$ is centered at $0$.
Let $\delta > 0$ be as given by Lemma~\ref{L-CLT} with respect to $\eps /2$ and let us show that
$\delta /2$ does the required job.
Let $F$ be a nonempty finite set and let $E$ be a subset of $F$ with $|E|\geq (1-\delta /2)|F|$.
Let $\sigma\in\Sym (F)$. Define $A = \{ y\in Y^F : \sum_{s\in E} y_s > 0 \}$.
Then $A = \pi_E^{-1} (\pi_E (A))$.
Define the three random variables
\begin{align*}
U = \sum_{s\in \sigma E \cap E} \pi_s ,
\hspace*{4mm}
V = \sum_{s\in E \setminus \sigma E} \pi_s,
\hspace*{4mm}
W = \sum_{s\in \sigma E \setminus E} \pi_s .
\end{align*}
Since $|\sigma E\cap E| = |F| - |F\setminus \sigma E| - |F\setminus E|
= |F| - 2|F\setminus E| \geq (1-\delta )|F|$, by our choice of $\delta$ via Lemma~\ref{L-CLT} we then have
\begin{align*}
\nu^F (\sigma A \triangle A)
&= \nu^F (A \setminus \sigma A) + \nu^{F_k} (\sigma A \setminus A) \\
&= 2\nu^F (A \setminus \sigma A)
= 2\prob (W \leq - U < V) < \eps . \qedhere
\end{align*}
\end{proof}

For a group $\Gamma$, finite sets $F,T\subseteq\Gamma$, and a $\delta > 0$, we say that $F$ is
{\it $(T,\delta )$-invariant} if $| F \cap \bigcap_{s\in T} s^{-1} F| \geq (1-\delta )|F|$.

\begin{lemma}\label{L-translates}
Let $\Gamma$ be a countably infinite amenable group.
Let $T$ be a finite subset of $\Gamma$ containing $1_\Gamma$ and let $\delta > 0$.
Let $F$ be a $(T,\frac12 )$-invariant nonempty finite subset of $\Gamma$.
Then there exists a $C\subseteq \Gamma$ with $|C| \geq |F|/(2|T|^2)$
such that the sets $Tc$ for $c\in C$ are pairwise disjoint and contained in $F$.
\end{lemma}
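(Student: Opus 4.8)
The plan is to run a greedy packing argument inside the ``$T$-interior'' of $F$. First I would set $F' := F \cap \bigcap_{s\in T} s^{-1}F$, which by the $(T,\tfrac12)$-invariance hypothesis satisfies $|F'| \geq \tfrac12 |F|$. The point of passing to $F'$ is that it is exactly the set of $c\in\Gamma$ for which the whole translate $Tc$ lies in $F$: indeed $Tc\subseteq F$ if and only if $sc\in F$ for every $s\in T$, i.e.\ $c\in s^{-1}F$ for every $s\in T$, and since $1_\Gamma\in T$ this in particular forces $c\in F$. So for every $c\in F'$ we automatically have $Tc\subseteq F$.

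Next I would choose $C\subseteq F'$ to be maximal with respect to inclusion among all subsets $D\subseteq F'$ having the property that the translates $\{Td : d\in D\}$ are pairwise disjoint; such a maximal $C$ exists because $F'$ is finite (the empty set already has the property). By construction the sets $Tc$, $c\in C$, are pairwise disjoint, and by the previous paragraph each is contained in $F$, so the only thing left is to bound $|C|$ from below.

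For the cardinality bound I would use the elementary observation that for $c,c'\in\Gamma$ one has $Tc\cap Tc'\neq\emptyset$ if and only if $c\in T^{-1}Tc'$: if $sc=s'c'$ with $s,s'\in T$ then $c=s^{-1}s'c'$, and conversely. Maximality of $C$ then says precisely that every $c\in F'$ lies in $T^{-1}Tc'$ for some $c'\in C$ (for $c\in C$ itself this is automatic, since $1_\Gamma\in T^{-1}T$ as $1_\Gamma\in T$), i.e.\ $F'\subseteq\bigcup_{c'\in C}T^{-1}Tc'$. Since $T^{-1}T$ is the image of $T\times T$ under $(s,s')\mapsto s^{-1}s'$ we have $|T^{-1}T|\leq|T|^2$, so $|F'|\leq|C|\cdot|T|^2$, which combined with $|F'|\geq\tfrac12|F|$ gives $|C|\geq|F|/(2|T|^2)$, as desired.

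I do not expect any genuine obstacle here; the only points requiring care are getting the direction of the translate-overlap computation right and taking the ``$T$-interior'' $F'$ with respect to $s^{-1}F$ (rather than $sF$), so that $Tc\subseteq F$ holds for $c\in F'$. Note that amenability and the infinitude of $\Gamma$ are not actually used in the argument itself — they only enter the applications, where they guarantee the existence of $(T,\tfrac12)$-invariant finite sets $F$.
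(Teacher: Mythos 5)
Your proof is correct and is essentially identical to the paper's: both pass to the $T$-interior $F' = \bigcap_{s\in T} s^{-1}F$ (the same set as yours, since $1_\Gamma\in T$ makes the extra intersection with $F$ redundant), take a maximal subset $C\subseteq F'$ with pairwise disjoint translates $Tc$, and use maximality to get $F'\subseteq T^{-1}TC$ and hence $|F|\leq 2|F'|\leq 2|T|^2|C|$. Your closing remark that amenability and infinitude of $\Gamma$ play no role in the argument itself is also accurate.
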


\begin{proof}
Write $F' = \bigcap_{s\in T} s^{-1} F$.
Take a maximal set $C\subseteq F'$ with the property that the sets $Tc$ for $c\in C$
are pairwise disjoint. Note that $TC\subseteq F$. We also have $F' \subseteq T^{-1} TC$,
for otherwise taking a $d\in F' \setminus T^{-1} TC$ we would have
$Td \cap Tc = \emptyset$ for all $c\in C$, contradicting maximality.
Finally, since $F$ is $(T,\frac12 )$-invariant we have
$|F| \leq 2|F'| \leq 2|T|^2 |C|$, and so $C$ does the job.
\end{proof}

\begin{theorem}\label{T-Gamma}
Let $\Gamma\curvearrowright X$ be a topologically free action
of a countably infinite amenable group on the Cantor set. Then every subgroup
$[[\Gamma\curvearrowright X ]]$ containing $\fA (\Gamma, X)$ has property Gamma.
\end{theorem}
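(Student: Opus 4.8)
The plan is to produce, for every $\eps>0$ and every finite set of group elements, a projection of trace close to $\tfrac12$ in $\sL G$ that almost commutes with those elements; this projection will be the indicator of a ``majority set'' living inside the group von Neumann algebra of a large copy of $(\Z/2\Z)^F$ embedded in $\fA(\Gamma,X)$, and its approximate invariance will be exactly the central-limit-theorem estimate of Lemma~\ref{L-half}. First the reductions: because the action is topologically free and $\Gamma$ is infinite, the points with trivial stabilizer are dense and all have infinite orbit, so Proposition~\ref{P-ICC} applies and $\sL G$ is a II$_1$ factor with canonical trace $\tau$. By the projection formulation of property Gamma recalled above (from \cite{Con76}) together with $\|\cdot\|_2$-density of $\C[G]$ in $\sL G$ (a routine approximation reducing an arbitrary finite subset of $\sL G$ to a finite set of canonical unitaries $u_g$), it is enough to find, for each $\eps>0$ and finite $E\subseteq G$, a projection $p\in\sL G$ with $|\tau(p)-\tfrac12|<\eps$ and $\|[u_g,p]\|_2<\eps$ for all $g\in E$. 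Fix $\eps$ and $E$. Since $E\subseteq[[\Gamma\curvearrowright X]]$ is finite, choose a finite symmetric $T\subseteq\Gamma$ with $1_\Gamma\in T$ and a clopen partition $\mathcal{P}$ of $X$ fine enough that for every $g\in E$ and $P\in\mathcal{P}$ there is $s(g,P)\in T$ with $gx=s(g,P)x$ on $P$. Let $\delta>0$ be the constant given by Lemma~\ref{L-half} applied to the uniform probability measure on a two-point set with tolerance $\eps^2$, and use amenability of $\Gamma$ to fix a nonempty finite $F\subseteq\Gamma$ which is $(T,\delta)$-invariant, so $F':=\bigcap_{s\in T}s^{-1}F$ satisfies $|F'|\geq(1-\delta)|F|$.

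The realization step. Using topological freeness and the Cantor (zero-dimensional) structure of $X$, choose a free point $x_0$ and a clopen neighbourhood $U\ni x_0$, small enough that the sets $fU$ ($f\in F$) are pairwise disjoint and each lies in a single cell of $\mathcal{P}$, and such that $U$ contains four pairwise disjoint clopen sets $D_0,D_1,D_2,D_3$ that are pairwise $\Gamma$-translates of one another; Lemma~\ref{L-translates} is the combinatorial input used to arrange this. For $f\in F$ let $\tau_f$ be the homeomorphism of $X$ equal to the product of the transposition of $fD_0$ with $fD_1$ and the transposition of $fD_2$ with $fD_3$, and equal to the identity off $f(D_0\cup D_1\cup D_2\cup D_3)$. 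Each $\tau_f$ is an involution lying in $\fA_3(\Gamma,X)=\fA(\Gamma,X)$, since a product of two transpositions of clopen sets that are pairwise $\Gamma$-translates of one another is a product of $3$-cycles of such sets and hence lies in $\fA_3(\Gamma,X)$. The $\tau_f$ have pairwise disjoint supports (inside the disjoint sets $fU$), so they commute and generate a copy $N\cong(\Z/2\Z)^F$ inside $G$. Finally, for $g\in E$ and $f\in F$ the element $g$ acts on $fU$, and hence on the support of $\tau_f$, by a single translation $\sigma_{g,f}\in T$; as $g(fD_i)=(\sigma_{g,f}f)D_i$ and $\sigma_{g,f}f\in F$ whenever $f\in F'$, this gives $u_g\tau_fu_g^{-1}=\tau_{\phi_g(f)}$ for all $f\in F'$, where $\phi_g(f):=\sigma_{g,f}f$ defines an injection $\phi_g\colon F'\to F$ (injective because the $fU$ are disjoint and $g$ is injective).

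The projection and the estimates. Identify $\sL N$ with $L^\infty(\{-1,1\}^F)$ via the Fourier transform, so that $\tau$ corresponds to the uniform product measure $\nu^F$ and $\tau_f$ to the $f$-th coordinate. Let $A\subseteq\{-1,1\}^F$ be the set furnished by Lemma~\ref{L-half} for the inclusion $F'\subseteq F$, so that $A$ depends only on the coordinates in $F'$, $|\nu^F(A)-\tfrac12|<\eps^2$, and $\nu^F(\sigma A\triangle A)<\eps^2$ for all $\sigma\in\Sym(F)$; set $p:=\mathbf{1}_A\in\sL N\subseteq\sL G$. Then $\tau(p)=\nu^F(A)$ is within $\eps$ of $\tfrac12$. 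For $g\in E$, since $p$ involves only the $\tau_f$ with $f\in F'$ and $u_g\tau_fu_g^{-1}=\tau_{\phi_g(f)}$ there, conjugation by $u_g$ sends $p$ into $\sL N$, where it equals $\mathbf{1}_{\sigma_g A}$ for any permutation $\sigma_g\in\Sym(F)$ extending $\phi_g$; hence $\|[u_g,p]\|_2^2=\|u_gpu_g^{-1}-p\|_2^2=\nu^F(\sigma_g A\triangle A)<\eps^2$. This yields the required projection, so $\sL G$ has property Gamma.

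The soft part of this is the ``majority set'' mechanism packaged in Lemmas~\ref{L-CLT}--\ref{L-half}. The step I expect to demand genuine care is the realization in the second paragraph: constructing, around a suitably chosen free point, the family of pairwise disjoint mutually $\Gamma$-translated clopen tiles inside the Cantor set so that each $\tau_f$ lands in the alternating group (through products of $3$-cycles) and so that conjugation by an arbitrary piecewise-$\Gamma$ element $g\in E$ acts on the resulting copy of $(\Z/2\Z)^F$ as a partial coordinate permutation of $F$ that extends to an element of $\Sym(F)$ after restriction to the $(T,\delta)$-interior $F'$ — this is where topological freeness, the zero-dimensionality of $X$, the $(T,\delta)$-invariance of $F$, and Lemma~\ref{L-translates} are all used.
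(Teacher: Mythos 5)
Your overall architecture is the same as the paper's: embed a large product of finite abelian groups into $\fA (\Gamma ,X)$ indexed by a F{\o}lner set, arrange that conjugation by each element of the given finite set permutes most of the coordinates, and take $p$ to be the indicator of the ``majority set'' produced by Lemma~\ref{L-half}. The reduction to canonical unitaries, the identification of the von Neumann algebra of the embedded product with $L^\infty$ of a product probability space, and the final trace and commutator estimates are all fine and essentially identical to the paper's. The problem is exactly where you predicted it would be: the realization step.

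You require a clopen neighbourhood $U$ of a free point $x_0$ that is simultaneously (a) small enough that the translates $fU$, $f\in F$, are pairwise disjoint and each contained in a single cell of $\mathcal{P}$, and (b) large enough to contain four pairwise disjoint clopen sets that are pairwise $\Gamma$-translates of one another. Condition (b) forces $U$ to contain four points of a single $\Gamma$-orbit, and topological freeness provides no such recurrence. Concretely, let $X_1 \subseteq \{0,1\}^{\Z}$ be the orbit closure of the point with a single $1$ at the origin and let $\Z$ act on $X_1 \times K$ ($K$ a Cantor set) through the first factor; this is a topologically free action of $\Z$ on the Cantor set, the point $x$ with the $1$ at the origin is isolated in $X_1$, and the clopen set $\{ x \} \times K$ meets every orbit in at most one point, so no clopen $U$ contained in it can satisfy (b). Moreover Lemma~\ref{L-translates}, which you cite as the input for (b), is a purely group-theoretic statement about packing translates $Tc$ inside $F$ and says nothing about orbits returning to a small open set. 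The paper's proof is built around precisely this obstruction: it places the permuted clopen sets at translates $tcB$ of a single small $B$, with the three ``centers'' $c$ ranging over a set $D$ that is spread out across $F$ rather than clustered near $x_0$, and it obtains the needed uniformity of the cocycle (that each $h\in\Omega$ carries $tcB$ to $\theta (h,t)tcB$ with $\theta (h,t)$ independent of $c$) by a pigeonhole argument over the set $C$ of centers furnished by Lemma~\ref{L-translates}, choosing $D\subseteq C$ with $|D|=3$ on which the transition data $\theta_c$ coincide. Your proposal has no substitute for this pigeonhole step, and without it the key identity $u_g \tau_f u_g^{-1} = \tau_{\phi_g (f)}$ cannot be secured, since the element $g$ may act by different translations on the four sets $fD_0 ,\dots ,fD_3$ once they are no longer confined to a single cell of $\mathcal{P}$.
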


\begin{proof}
Let $\Omega$ be a finite symmetric subset of $[[\Gamma\curvearrowright X ]]$.
By the definition of the topological full group,
we can find a clopen partition $\{ P_1 , \dots , P_m \}$ of $X$
such that for each $h\in\Omega$ there exist $s_{h,1} , \dots , s_{h,m} \in \Gamma$ for
which $hx = s_{h,i} x$ for every $i=1,\dots , m$ and $x\in P_i$.
Let $K$ be the collection of all of these $s_{h,i}$.

Let $\delta > 0$ be as given by Lemma~\ref{L-half} with respect to $\eps$. By amenability
there exists a $(K , \delta )$-invariant nonempty finite set $T\subseteq\Gamma$ containing $1_\Gamma$.
Again by amenability there is a $(T,\frac12 )$-invariant nonempty
finite set $F\subseteq\Gamma$ which, since $\Gamma$ is infinite, we can take to have cardinality
greater than $6|K^{\Omega\times T}||T|^2$.
Then by Lemma~\ref{L-translates} we can find a $C\subseteq\Gamma$ with
$|C| \geq |F|/(2|T|^2) > 3|K^{\Omega\times T}|$ such that the sets $Tc$ for $c\in C$
are pairwise disjoint and contained in $F$.

By topological freeness there exists an $x_0 \in X$ such that the map $t\mapsto tx_0$
from $F$ to $X$ is injective. By continuity we can then find a clopen neighborhood
$B$ of $x_0$ such that the sets $tB$ for $t\in F$ are pairwise disjoint and
for each $t\in F$ there is an $1\leq i\leq m$ such that $tB \subseteq P_i$.

For every $c\in C$ we have a function $\theta_c \in K^{\Omega\times T}$
such that $htcx = \theta_c (h,t)tcx$ for all $h\in\Omega$, $t\in T$, and $x\in B$. Since $|C| > 3|K^{\Omega\times T}|$,
by the pigeonhole principle there are a $\theta\in K^{\Omega\times T}$ and a $D\subseteq C$
with $|D| = 3$ such that $\theta_c = \theta$ for each $c\in D$.

Set $T' = \bigcap_{s\in K} s^{-1} T$. For each $s\in K$ we have $sT' \subseteq T$
and so for every $h\in\Omega$ we can find a $\sigma_h \in\Sym (T)$ such that $\sigma_h t = \theta (h,t)t$
for all $t\in T'$. Consider the alternating group $\fA (D)$
on the $3$-element set $D$. This is a cyclic group of order $3$.
We regard the product $H = \fA (D)^{T}$ as a subgroup
of $\fA (\Gamma , X )$ with an element $(\omega_t )_{t\in T}$ in $H$ acting by
$tcx \mapsto t\omega_t (c)x$ for all $x\in B$, $t\in T$, and $c\in D$
and by $x\mapsto x$ for all $x\in X\setminus TDB$.

Write $Y$ for the spectrum of the commutative von Neumann algebra $\vN \fA (D)$,
and identify $\vN \fA (D)$ with $\Cb^Y$. We have $|Y| = 3$.
Write $\nu$ for the uniform probability measure on $Y$.
We regard the commutative von Neumann algebra
$\vN H \cong \vN \fA (D)^{\otimes T}$ as $\Cb^{Y^T} \cong (\Cb^Y )^{\otimes T}$
via the canonical identifications.
The canonical tracial state on $\vN H$ is then
given by integration with respect to $\nu^T$.

For $h\in\Omega$ we consider $\sigma_h$ as acting on $Y^T$
by $(\sigma_h x)_t = x_{\sigma_h^{-1} t}$ for all $t\in T$.
According to our invocation of Lemma~\ref{L-half} above,
there exists a set $A\subseteq Y^T$ with $A = \pi_{T'}^{-1} (\pi_{T'} (A))$ such that
$|\nu^T (A) - \frac12 | < \eps$ and $\nu^T (\sigma_h A \triangle A) < \eps$ for all $h\in\Omega$.
Write $p$ for the indicator function of $A$ in $\Cb^{Y^T}$, which we view
as a projection in $\vN \fA (\Gamma , X)$ under the canonical
inclusion of $\vN H \cong\Cb^{Y^T}$ in $\vN \fA (\Gamma , X)$.
Since the canonical tracial state $\tau$ on $\vN \fA (\Gamma , X)$ restricts
to integration with respect to $\nu^T$ on $\vN H \cong\Cb^{Y^T}$,
we have $|\tau (p) - \frac12 | < \eps$.

Now let $h\in\Omega$. Given a
$g = (g_t )_{t\in T}$ in $\fA (D)^T$ with $g_t = 1_{\fA (D)}$ for all $t\in T\setminus T'$
and viewing $\fA (D)^T$ as subgroup of $\fA (\Gamma , X)$,
the element $hgh^{-1}$ also belongs to $\fA (D)^{T}$, with its value at
a given coordinate $t\in T'$ being equal to $g_{\sigma_h^{-1} t}$.
Thus for every elementary tensor $a = \otimes_{t\in T} a_t$ in
$\vN\fA (D)^{\otimes T}$ satisfying $a_t = 1$ for all $t\in T'$ we have
$u_h a u_h^{-1} = \otimes_{t\in T} a_{\sigma_h^{-1} t}$.
Since $A = \pi_{T'}^{-1} (\pi_{T'} (A))$, the projection $p$ is a sum of minimal projections in $\vN\fA (D)^{\otimes T}$
which can be expressed as such elementary tensors and so $u_h p u_h^{-1}$ is equal to
the indicator function of $\sigma_h A$ in $\Cb^{X^T}$ under the identification of
$\Cb^{X^T}$ with $\vN\fA (D)^{\otimes T}$. It follows that
\begin{align*}
\| u_h p u_h^{-1} - p \|_2 = \nu^T (\sigma_s A \triangle A)^{1/2} < \eps^{1/2} .
\end{align*}
We conclude that $\vN\fA (\Gamma , X)$ has property Gamma.
\end{proof}

\begin{remark}
By \cite{BreKalKenOza17}, a countable group $G$ has no nontrivial normal amenable subgroups
if and only if its reduced C$^*$-algebra $C_\lambda^* (G)$ has a unique tracial state (the canonical one).
In Section~\ref{S-nonamenable 2}
we will construct large classes of topologically free minimal actions $\Gamma\curvearrowright X$
of countable amenable groups on the Cantor set whose alternating groups $\fA (\Gamma ,X)$
are simple and nonamenable. It follows that for such actions the reduced C$^*$-algebra
$C_\lambda^* (\fA (\Gamma ,X))$ has a unique tracial state, and hence has the C$^*$-algebraic
version of property Gamma \cite{GonJiaSu00,CasEviTikWhiWin18} as a consequence Theorem~\ref{T-Gamma} and
the fact that the von Neumann algebra of a discrete group is the weak operator closure of the
reduced group C$^*$-algebra under the GNS representation of the canonical tracial state.
\end{remark}

\begin{question}
Can one strengthen the conclusion of Theorem~\ref{T-Gamma} by replacing property Gamma
with the property that the group von Neumann algebra is McDuff?
\end{question}

\section{Inner amenability}\label{S-inner amenability}

A {\it mean} on a set $X$ is a finitely additive probability measure defined on the collection
of all subsets of $X$. An action of a discrete group $G$ on a set $X$ is called {\it amenable}
if there exists a $G$-invariant mean on $X$. We say that a discrete group $G$ is {\it inner amenable}
if there exists a conjugation invariant mean $m$ on $G$ which is atomless,
i.e., $m(F)=0$ for every finite subset $F$ of $G$. This definition of inner amenability is slightly stronger than Effros's
original one in \cite{Eff75} and coincides with it if the group is ICC (the main situation of interest in \cite{Eff75}),
but has become standard in the setting of general groups.

A mean on a set $X$ give rise via integration to a unital positive linear functional $\ell^\infty (X) \to \C$
(also called a {\it mean}),
and conversely each such functional produces a mean on $X$ by evaluation on indicator functions.
Thus inner amenability for a discrete group $G$ can be expressed as the existence of a
mean $\sigma : \ell^\infty (G) \to \C$ which vanishes on $c_0 (G)$ and
is invariant under the action of $G$ that composes a function
with the action $(s,t)\mapsto s^{-1} ts$ of $G$ on itself by conjugation.
It is moreover the case that $G$ is inner amenable if and only if for every finite set $F\subseteq G$ and $\eps > 0$
there exist finite sets $W\subseteq G$ of arbitraily large cardinality such that $|sWs^{-1} \triangle W| < \eps |W|$ for all $s\in F$.
The backward implication is established by viewing
the normalized indicator functions of the sets $W$ as means on $\ell^\infty (G)$
via the canonical inclusion
$\ell^1 (G) \subseteq \ell^\infty (G)^*$ and then taking a weak$^*$ cluster point over the net of pairs $(F,\eps )$
to produce a mean on $\ell^\infty (G)$ that witnesses inner amenability.

By \cite{Eff75}, if $G$ is an ICC discrete group with property Gamma then it is inner amenable.
We thereby derive the following corollary from Theorem~\ref{T-Gamma}.

\begin{corollary}\label{C-inner}
Let $\Gamma\curvearrowright X$ be a topologically free action
of an infinite amenable group on the Cantor set. Then every subgroup $G$ of $[[\Gamma\curvearrowright X ]]$ containing $\fA (\Gamma ,X)$ is inner amenable.
\end{corollary}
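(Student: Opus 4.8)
The plan is to derive Corollary~\ref{C-inner} as an essentially immediate consequence of Theorem~\ref{T-Gamma} together with the classical result of Effros \cite{Eff75} that an ICC discrete group with property Gamma is inner amenable. The only thing requiring verification is that the ICC hypothesis is met, for which I would invoke Proposition~\ref{P-ICC}.

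First I would observe that since the action $\Gamma\curvearrowright X$ is topologically free, the set of points with trivial stabilizer is dense; as $\Gamma$ is infinite, any such point has infinite orbit, and in particular its orbit contains at least four points. Hence the set of points whose orbit contains at least four points is dense in $X$, so the hypothesis of Proposition~\ref{P-ICC} is satisfied and every subgroup $G$ of $[[\Gamma\curvearrowright X]]$ containing $\fA(\Gamma,X)$ is ICC. (One subtlety: if $\fA(\Gamma,X)$ happens to be trivial then $G$ could conceivably be trivial and the statement is vacuous or degenerate; but topological freeness of an infinite group forces points of infinite orbit, so $\fA(\Gamma,X)$ is nontrivial and $G$ is infinite, hence genuinely ICC.)

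Next I would apply Theorem~\ref{T-Gamma}, which states precisely that under the stated hypotheses every such $G$ has property Gamma, meaning $\sL G$ (a II$_1$ factor by the ICC condition just established) has property Gamma. Then Effros's theorem \cite{Eff75}, as recalled in the discussion preceding the corollary, gives that an ICC discrete group whose group von Neumann algebra has property Gamma is inner amenable. Combining these yields that $G$ is inner amenable, completing the proof.

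I do not expect any real obstacle here: the corollary is a packaging of Theorem~\ref{T-Gamma}, Proposition~\ref{P-ICC}, and a citation. The only point demanding a line of care is confirming the ICC hypothesis of Proposition~\ref{P-ICC} from topological freeness plus infiniteness of $\Gamma$, which is the orbit-size observation above. A cleaner alternative, if one wishes to avoid even invoking Proposition~\ref{P-ICC}, would be to note that Section~\ref{S-inner amenability} also contains (as the author announces) direct proofs of inner amenability that only use that $G$ is infinite; but for the purposes of this corollary the route through property Gamma and Effros's theorem is the shortest.

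\begin{proof}
Since $\Gamma$ is infinite and acts topologically freely, the points with trivial stabilizer are dense in $X$, and each such point has infinite orbit; in particular the set of points whose orbit contains at least four points is dense in $X$. By Proposition~\ref{P-ICC}, every subgroup $G$ of $[[\Gamma\curvearrowright X]]$ containing $\fA(\Gamma,X)$ is ICC. By Theorem~\ref{T-Gamma}, such a $G$ has property Gamma, i.e., $\sL G$ has property Gamma. It then follows from \cite{Eff75} that $G$ is inner amenable.
\end{proof}
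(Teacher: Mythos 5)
Your proposal is correct and follows exactly the route the paper takes: the corollary is derived from Theorem~\ref{T-Gamma} together with Effros's theorem that an ICC group with property Gamma is inner amenable, with the ICC hypothesis supplied by Proposition~\ref{P-ICC}. Your extra line verifying the hypothesis of Proposition~\ref{P-ICC} (dense set of points with orbit of size at least four, from topological freeness and infiniteness of $\Gamma$) is a correct and worthwhile explicitation of what the paper leaves implicit.
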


This corollary can also be proved more directly, and we will now describe two methods for doing this
that may be useful for establishing inner amenability in other contexts.
The first of these will actually give us the conclusion even without the assumption that the action is topologically free, but just under the (necessary) assumption that the group $G$ is infinite, which always holds when $\fA (\Gamma ,X )$ is nontrivial.

The following two folklore facts can be verified as exercises.

\begin{lemma}\label{lem:Folklore1}
Suppose that $G\curvearrowright X$ is an amenable action of a group $G$ on a set $X$.
Let $G\curvearrowright Y$ be another action of $G$ and assume that $G_x\curvearrowright Y$ is amenable for every $x\in X$.
Then $G\curvearrowright Y$ is amenable.
\end{lemma}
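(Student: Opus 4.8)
The plan is to prove this folklore fact about amenable actions via an averaging argument, following the classical "induction" pattern for amenability. Recall that an action $G \curvearrowright Y$ is amenable precisely when there is a $G$-invariant mean on $Y$, equivalently a $G$-invariant state on $\ell^\infty(Y)$, equivalently (by a standard fixed-point characterization) every affine action of $G$ on a nonempty compact convex subset of a locally convex space arising from the action has a fixed point. The input data are: a $G$-invariant mean $m$ on $X$, and for each $x \in X$ a $G_x$-invariant mean $\mu_x$ on $Y$. The goal is to manufacture a $G$-invariant mean on $Y$.

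First I would reduce to the functional-analytic picture: fix a nonempty weak$^*$-compact convex $G$-invariant set $\cC \subseteq \ell^\infty(Y)^*$ of means on $Y$ on which we want a fixed point, or more concretely just aim to build the invariant mean directly. The key step is the following averaging construction. For each $x \in X$, pick a $G_x$-invariant mean $\mu_x$ on $Y$; by a measurable-selection argument (or, if one prefers to avoid measurability issues, by working with the abstract compact convex set of all means and taking a $G$-equivariant choice of a compact convex slice over each point) one arranges the assignment $x \mapsto \mu_x$ to be "nice enough" to integrate. Then define a mean $\lambda$ on $Y$ by
\[
\lambda(f) = \int_X \mu_x(f)\, dm(x) \qquad (f \in \ell^\infty(Y)),
\]
where the integral against the finitely additive $m$ is interpreted as a weak$^*$-limit over finite partitions (a standard construction for finitely additive measures, or one replaces $m$ by a genuine state on $\ell^\infty(X)$ and uses that). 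The point is then to check $G$-invariance: for $s \in G$ and $f \in \ell^\infty(Y)$,
\[
\lambda(s \cdot f) = \int_X \mu_x(s \cdot f)\, dm(x).
\]
Here one uses that $\mu_x(s \cdot f) = \mu_{?}(f)$ is not literally true because $\mu_x$ is only $G_x$-invariant, not $G$-invariant; instead the correct bookkeeping is that $s_*\mu_x$ (pushforward) is a $G_{sx}$-invariant mean, so one should choose the family $(\mu_x)$ to be $G$-equivariant, $\mu_{sx} = s_* \mu_x$. With that equivariance, $\mu_x(s^{-1} \cdot f) = (s_*\mu_x)(f) = \mu_{sx}(f)$, and then
\[
\lambda(s^{-1} \cdot f) = \int_X \mu_{sx}(f)\, dm(x) = \int_X \mu_x(f)\, dm(x) = \lambda(f),
\]
the middle equality being exactly $G$-invariance of $m$ (applied to the bounded function $x \mapsto \mu_x(f)$ on $X$). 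Hence $\lambda$ is a $G$-invariant mean on $Y$, so $G \curvearrowright Y$ is amenable.

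The main obstacle, and the step that needs the most care, is producing the $G$-equivariant family $x \mapsto \mu_x$ of means with $\mu_x$ being $G_x$-invariant: for a single point this is just the hypothesis, but equivariance must be imposed compatibly. The clean way is to let $K_x$ denote the (nonempty, by hypothesis, weak$^*$-compact, convex) set of $G_x$-invariant means on $Y$; the assignment $x \mapsto K_x$ is $G$-equivariant in the sense that $s_* K_x = K_{sx}$; choosing one point in each $G$-orbit's fiber and spreading it around by the $G$-action (using the axiom of choice on the orbit space) yields the desired equivariant section, with no measurability needed since $m$ is defined on \emph{all} subsets of $X$ and the integral $\int_X \mu_x(f)\,dm(x)$ of the bounded scalar function $x \mapsto \mu_x(f)$ makes sense for any such function against the finitely additive $m$. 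I would present the argument in this order: (1) set up means as states and recall integration against a mean; (2) build the equivariant family $(\mu_x)$; (3) define $\lambda$ and verify it is a mean; (4) verify $G$-invariance via the equivariance of $(\mu_x)$ and invariance of $m$.
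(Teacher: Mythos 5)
Your argument is correct, and it is essentially the intended one: the paper states this lemma without proof (it is explicitly left as an exercise), and the standard solution is exactly the averaging construction you give. The one genuinely delicate point --- producing a $G$-equivariant family $x\mapsto \mu_x$ of $G_x$-invariant means --- you handle properly: picking a representative in each $G$-orbit and transporting by the action is consistent precisely because $sx=tx$ forces $t^{-1}s\in G_x$ and hence $s_*\mu_x=t_*\mu_x$, and since $m$ is defined on all of $\ell^\infty(X)$ no measurability is needed. Your opening mention of a ``measurable-selection argument'' is a red herring that you correctly discard; the rest of the verification (that $\lambda$ is a mean, and that invariance of $\lambda$ reduces to invariance of $m$ applied to $x\mapsto\mu_x(f)$ via the equivariance $\mu_{sx}=s_*\mu_x$) is accurate.
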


\begin{lemma}\label{lem:Folklore2}
Let $G$ be a group and let $H$ be a subgroup of $G$. Suppose that $H$ is inner amenable
and the action $G\curvearrowright G/H$ is amenable. Then $G$ is inner amenable.
\end{lemma}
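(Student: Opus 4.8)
The plan is to build an atomless conjugation-invariant mean on $G$ out of the $G$-invariant mean on $G/H$ and the atomless conjugation-invariant mean on $H$ by an averaging construction, using the functional-analytic reformulation of inner amenability recalled above: it suffices to produce a mean $m\colon\ell^\infty(G)\to\C$ vanishing on $c_0(G)$ and invariant under the action of $G$ on $\ell^\infty(G)$ induced by the conjugation action $(t,g)\mapsto t^{-1}gt$, i.e. $(t\cdot f)(g)=f(t^{-1}gt)$.

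First I would fix a conjugation-invariant mean $\mu$ on $\ell^\infty(H)$ vanishing on $c_0(H)$, which exists since $H$ is inner amenable, and a $G$-invariant mean $\nu$ on $\ell^\infty(G/H)$ for the left-translation action, which exists since $G\curvearrowright G/H$ is amenable. For each $g\in G$ define $\mu_g\colon\ell^\infty(G)\to\C$ by $\mu_g(f)=\mu(h\mapsto f(ghg^{-1}))$, the pushforward of $\mu$ along the map $h\mapsto ghg^{-1}$ from $H$ into $G$; each $\mu_g$ is again a mean, and it annihilates $c_0(G)$ because for finite $F\subseteq G$ the set $H\cap g^{-1}Fg$ is finite and $\mu$ is atomless. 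The key observation is that $\mu_g$ depends only on the coset $gH$: for $h_0\in H$, reindexing via $h\mapsto h_0hh_0^{-1}$ and invoking the conjugation-invariance of $\mu$ shows $\mu_{gh_0}=\mu_g$. Hence $g\mapsto\mu_g(f)$ descends to a bounded function $\hat f$ on $G/H$, and I would set $m(f):=\nu(\hat f)$. Positivity, unitality and linearity of $m$ are immediate, and $m$ vanishes on $c_0(G)$ since then $\hat f\equiv 0$.

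The only step requiring care is the conjugation-invariance of $m$, and it comes down to tracking reindexings: a direct computation gives $\widehat{t\cdot f}(gH)=\mu_g(t\cdot f)=\mu(h\mapsto f((t^{-1}g)h(t^{-1}g)^{-1}))=\mu_{t^{-1}g}(f)=\hat f(t^{-1}\cdot gH)$, where $t^{-1}\cdot gH$ refers to the natural left-translation action of $G$ on $G/H$. Thus the averaging map $f\mapsto\hat f$ intertwines the conjugation action of $G$ on $\ell^\infty(G)$ with the left-translation action of $G$ on $\ell^\infty(G/H)$, so $G$-invariance of $\nu$ yields $m(t\cdot f)=\nu(\hat f\circ t^{-1})=\nu(\hat f)=m(f)$. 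Therefore $m$ witnesses the inner amenability of $G$. The main (mild) obstacle is purely bookkeeping: recognizing that the conjugation-invariance of $\mu$ is exactly what forces $\mu_g$ to factor through $G/H$ with the correct equivariance, after which the argument is routine.
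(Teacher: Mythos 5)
Your proof is correct. The paper does not actually supply an argument for this lemma --- it is stated as one of two ``folklore facts'' that ``can be verified as exercises'' --- so there is nothing to compare against; your averaging construction (pushing the atomless conjugation-invariant mean $\mu$ on $H$ forward along $h\mapsto ghg^{-1}$, observing that $H$-conjugation-invariance of $\mu$ makes $\mu_g$ depend only on $gH$, and then integrating against the $G$-invariant mean $\nu$ on $G/H$, with the intertwining identity $\widehat{t\cdot f}(gH)=\hat f(t^{-1}gH)$ giving conjugation-invariance) is exactly the standard proof the authors have in mind.
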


Let $\Gamma \curvearrowright X$ be an action of a countable amenable group on the Cantor set and let $G$ be an infinite subgroup of $[[\Gamma \curvearrowright X ]]$ containing $\fA (\Gamma , X)$.  As usual, for a finite set $F\subseteq X$ we write $G_{(F)}$ for the set of all elements in $G$
which fix a neighborhood of every $x\in F$, and $G_{(x)}$ when $F$ is a singleton $\{ x \}$.

\begin{lemma}\label{lem:GFamen}
Let $F$ be a finite subset of $X$. Then the action of $G$ on $G/G_{(F)}$ is amenable.
\end{lemma}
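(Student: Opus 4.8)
The plan is to identify the coset space $G/G_{(F)}$ with an orbit in a suitable space of configurations and then exhibit an invariant mean. First I would observe that $G$ acts on $X$ and that $G_{(F)}$ is, roughly speaking, the pointwise stabilizer of a germ-neighborhood of $F$. Concretely, let $\mathcal{C}$ denote the (countable) set of clopen subsets of $X$, and consider the action of $G$ on the countable set $Z := \mathcal{C}^F$, where $g$ sends a function $F \to \mathcal{C}$, $x \mapsto U_x$, to the function $gx' \mapsto gU_{x'}$... but this is not quite invariant since $gF \neq F$ in general. Instead, the cleaner route is to work directly with the germ description: the stabilizer in $G$ of a point $x \in X$, call it $G_x$, contains $G_{(x)}$, and $G_x/G_{(x)}$ embeds into $\Gamma_x$-germs, which since $\Gamma$ is amenable gives that $G_x$ acts amenably on $G_x/G_{(x)}$. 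So the real engine should be Lemma~\ref{lem:Folklore1} together with amenability of $\Gamma$.

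Here is the argument I would actually run. Enumerate $F = \{x_1, \dots, x_k\}$. By an induction on $k$ using Lemma~\ref{lem:Folklore1}, it suffices to treat the case of a singleton $F = \{x\}$ and then iterate: if the action of $G$ on $G/G_{(x_1)}$ is amenable, and for each coset $gG_{(x_1)}$ the stabilizer (which is conjugate to $G_{(x_1)}$) acts amenably on $G/G_{(\{x_1, \dots, x_k\})}$, we are done; the stabilizer statement reduces, after conjugating, to the amenability of the action of $G_{(x_1)}$ on $G_{(x_1)}/G_{(\{x_1,\dots,x_k\})}$, which is the same type of statement with a smaller configuration inside the group $G_{(x_1)}$, so the induction closes provided the base case is phrased for arbitrary such groups. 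For the base case $F = \{x\}$: the group $G$ acts on the orbit $\Gamma x = Gx \subseteq X$ (note $Gx = \Gamma x$ since $G \subseteq [[\Gamma \curvearrowright X]]$), and this action factors through the action of $\Gamma$ on $\Gamma x$, hence through the amenable group $\Gamma$, so the action of $G$ on $Gx$ is amenable. The stabilizer of the point $x$ in this action is $G_x$. Now apply Lemma~\ref{lem:Folklore1} with $X \rightsquigarrow Gx$ and $Y \rightsquigarrow G/G_{(x)}$: it remains to check that $G_y$ acts amenably on $G/G_{(x)}$ for every $y \in Gx$. Since $G$ acts transitively on $Gx$, all these stabilizers are conjugate, so it is enough to check that $G_x$ acts amenably on $G/G_{(x)}$. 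But $G_x \supseteq G_{(x)}$, and $G_x$ acting on $G/G_{(x)}$ restricts to the transitive action on $G_x/G_{(x)}$ on the fiber over $x \in Gx$, with all other orbits being copies of $G_x$-spaces of the form $G_x/(G_x \cap g G_{(x)} g^{-1})$; so it suffices to see $G_x/G_{(x)}$ is an amenable $G_x$-set together with amenability of the point-stabilizers there — and $G_x/G_{(x)}$ is amenable because it injects $G_x$-equivariantly into the set of germs at $x$ of elements of $\Gamma$ fixing $x$, i.e.\ into a quotient of the countable amenable group $\Gamma_x \leq \Gamma$, on which $G_x$ acts through $\Gamma$-translation composed with the quotient, so through an amenable group.

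The main obstacle, and the step requiring the most care, is the germ analysis: showing that $G_x/G_{(x)}$ is an amenable $G_x$-set. The subtlety is that $G_x/G_{(x)}$ is not literally a quotient group (since $G_{(x)}$ need not be normal in $G_x$), so one must argue at the level of $G_x$-sets rather than groups. The clean way is: map $G_x \to \mathrm{Germ}_x(\Gamma)$ by sending $g \in G_x$ to the germ at $x$ of the element $s \in \Gamma$ with $gy = sy$ for $y$ near $x$ — this is well-defined since $g$ agrees with a single $s \in \Gamma$ on a clopen neighborhood of $x$ and fixes $x$, so $s \in \Gamma_x$. The kernel of this map is exactly $G_{(x)}$, so $G_x/G_{(x)}$ injects $G_x$-equivariantly into $\mathrm{Germ}_x(\Gamma)$, which is the quotient of $\Gamma_x$ by the (normal) subgroup of elements acting trivially near $x$; on this countable set $G_x$ acts through the homomorphism $G_x \to \Gamma_x/(\text{triv near }x)$ given by the germ map, i.e.\ through an amenable group acting on itself by translation, which is an amenable action. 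Finally, combining: amenability of $G$ on $Gx$, amenability of each point-stabilizer $G_y$ on $G/G_{(x)}$ (reduced as above to the germ computation), and Lemma~\ref{lem:Folklore1} give that $G$ acts amenably on $G/G_{(x)}$; the induction on $|F|$ then yields the full statement.
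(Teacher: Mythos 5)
Your overall architecture---reduce to a pointwise stabilizer via a germ homomorphism, then handle the orbit of $F$ using amenability of $\Gamma$ and Lemma~\ref{lem:Folklore1}---matches the paper's, and your germ analysis (that $G_{(x)}$ is normal in $G_x$ and $G_x/G_{(x)}$ embeds $G_x$-equivariantly into $\Gamma_x/\Gamma_{(x)}$) is correct; the paper does this in one shot for all of $F$, embedding $G_F/G_{(F)}$ into $\prod_{x\in F}\Gamma_x/\Gamma_{(x)}$. But there are two genuine problems. First, your justification that $G\curvearrowright Gx$ is amenable is wrong as stated: this action does \emph{not} factor through $\Gamma$ (there is no homomorphism $G\to\Gamma$; elements of $G$ are only piecewise translations). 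The correct mechanism, and the one the paper uses, is that any $\Gamma$-invariant mean on $\Gamma x$ is automatically invariant under every element of $[[\Gamma\curvearrowright X]]$, since such an element decomposes $\Gamma x$ into finitely many pieces each translated by a single group element and finite additivity does the rest. (One also needs $Gx=\Gamma x$, which holds for infinite orbits because $\fA (\Gamma ,X)\subseteq G$ acts transitively on them, and is irrelevant for finite orbits, where counting measure suffices.)

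Second, and more seriously, your induction on $|F|$ does not close as written. At the inductive step you must show that $H:=G_{(\{x_1,\dots,x_j\})}$ acts amenably on $H/H_{x_{j+1}}\cong Hx_{j+1}$. But $H$ does not contain $\fA (\Gamma ,X)$, and $Hx_{j+1}$ is a \emph{proper} subset of $\Gamma x_{j+1}$ (it omits $x_1,\dots,x_j$ at least), so you cannot simply restrict a $\Gamma$-invariant mean on $\Gamma x_{j+1}$: you must check that such a mean gives $Hx_{j+1}$ positive measure. This is true---when $\Gamma x_{j+1}$ is infinite, $Hx_{j+1}$ is cofinite in $\Gamma x_{j+1}$ because alternating elements supported on small clopen sets disjoint from neighborhoods of $x_1,\dots,x_j$ still move $x_{j+1}$ anywhere else in its orbit, and invariant means on infinite orbits are atomless---but this is exactly where the work lies and your sketch is silent there beyond the phrase ``provided the base case is phrased for arbitrary such groups.'' The paper sidesteps the induction entirely: it reduces to the pointwise stabilizer $G_{F_0}$ of the points with infinite orbit, identifies $G/G_{F_0}$ with the set of tuples of \emph{distinct} points in $\prod_{i}\Gamma z_i$ under the diagonal action on $X^n$, and observes that the iterated product of $\Gamma$-invariant means gives that set full measure precisely because each orbit is infinite; $G$-invariance again comes from the piecewise-translation observation. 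A minor further point: to see that $G_x\curvearrowright G/G_{(x)}$ is amenable you only need an invariant mean supported on the single orbit $G_x/G_{(x)}$, which is an amenable group since $G_{(x)}$ is normal in $G_x$; your discussion of the other orbits and their point stabilizers is unnecessary.
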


\begin{proof}
Let $G_F=\bigcap _{x\in F}G_x$ be the pointwise stabilizer of $F$. Then the group $G_F /G_{(F)}$ is amenable, since it is isomorphic to a subgroup of $\prod _{x\in F}\Gamma _x/\Gamma _{(x)}$, which is amenable since $\Gamma$ is amenable. It follows that the action $G_F\curvearrowright G/G_{(F)}$ is amenable, and hence the action $gG_Fg^{-1} \curvearrowright G/G_{(F)}$ is amenable for each $g\in G$. Therefore, by Lemma \ref{lem:Folklore1}, to show that the action of $G$ on $G/G_{(F)}$ is amenable it is enough to show that the action $G\curvearrowright G/G_F$ is amenable. Let $F_0\subseteq F$ be the subset of points of $F$ whose $\Gamma$-orbit is infinite. Then $G_{F}$ has finite index in $G_{F_0}$, so it is enough to show that the action $G\curvearrowright G/G_{F_0}$ is amenable, since any $G$-invariant mean $m_0$ on $G/G_{F_0}$ lifts to a $G$-invariant mean $m$ on $G/G_F$ defined on $f\in \ell ^{\infty}(G/G_F)$ by
\[
\int _{G/G_F} f(gG_F) \, dm (gG_F) = \frac{1}{[G_{F_0} : G_{F}]}\int _{G/G_{F_0}} \sum _{gG_{F}\subseteq hG_{F_0}} f(gG_F) \, dm_0(hG_{F_0}) .
\]
Enumerate the elements of $F_0$ as $z_0,z_1,\dots ,z_{n-1}$. Consider the diagonal action of $G$ on $X^n$ given by $g(x_0,\dots , x_{n-1}) := (gx_0,\dots , gx_{n-1})$. This action is by homeomorphisms which are contained in the topological full group of the action of $\Gamma ^n$ on $X^n$. Let $Y \subseteq X^n$ denote the set of $n$-tuples of pairwise distinct points. Then the $G$-orbit of $(z_0,\dots , z_{n-1})$ coincides with the intersection of $Y$ with the $\Gamma ^n$-orbit, $\prod _{i<n}\Gamma z_i$, of $(z_0,\dots , z_{n-1})$. The action of $G$ on $G/G_{F_0}$ is then conjugate to the action of $G$ on the intersection of $Y$ with $\prod _{i<n}\Gamma z_i$. For each $i<n$ let $m_i$ be a mean on $\Gamma z_i$ which is invariant under the action of $\Gamma$. Then, arguing by induction on $n$, we see that the mean $m$ on $\prod _{i<n}\Gamma z_i$, defined on $f\in \ell ^{\infty}(\prod _{i<n}\Gamma z_i )$ by
\[
\int f(x_0,\dots , x_{n-1}) \, dm = \int _{\Gamma z_0}\cdots \int _{\Gamma z_{n-1}} f(x_0,\dots , x_{n-1}) \, dm _{n-1}(x_{n-1})\cdots dm _0 (x_0),
\]
is $\Gamma ^n$-invariant and satisfies $m (Y\cap \prod _{i<n}\Gamma z_i ) = 1$ since each orbit $\Gamma z_i$ is infinite. Since the action of $G$ on $Y\cap \prod _{i<n}\Gamma z_i$ is by piecewise translations of elements of $\Gamma ^n$, the mean $m$ witnesses that this action of $G$ is amenable. Therefore the action of $G$ on $G/G_{F_0}$ is amenable.
\end{proof}

Observe that a group which admits a nontrivial asymptotically central sequence $(c_n)_{n\in \N}$
is inner amenable, since any atomless mean on the set $\{ c_n : n\in\N \}$ will be conjugation invariant. We can thus assert the following in view of (iii) of Lemma~\ref{lem:IAfullgroup}.

\begin{lemma}\label{L-acs}
Let $X_0$ denote the set of points $x\in X$ with the property that for every open neighborhood $U$ of $x$ and every $n\in \N$ there exists some $y\in X$ such that $|\Gamma y \cap U|\geq n$. Let $F$ be a finite subset of $X$ containing a point from $X_0$. Then $G_{(F)}$ is inner amenable.
\end{lemma}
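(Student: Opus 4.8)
The plan is to derive Lemma~\ref{L-acs} directly from the two facts recalled just above it, namely that a group admitting a nontrivial asymptotically central sequence is inner amenable, and that (iii) of Lemma~\ref{lem:IAfullgroup} produces such a sequence inside the relevant pointwise stabilizer.

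First I would note that the hypothesis that $F$ contains a point $x$ of $X_0$ is exactly what is needed to invoke (iii) of Lemma~\ref{lem:IAfullgroup}: that result yields two noncommuting asymptotically central sequences in $G_{(x)}$ which can be taken to lie in $G_{(F)}\cap\fA(\Gamma,X)$. In particular $G_{(F)}$ itself contains a nontrivial asymptotically central sequence $(c_n)_{n\in\N}$, where nontriviality follows because the two sequences are noncommuting (as observed at the start of Section~\ref{S-Betti}, noncommuting asymptotically central sequences are automatically nontrivial, so $\{c_n:n\in\N\}$ is infinite).

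Next I would invoke the observation immediately preceding the lemma: any atomless mean on the infinite set $\{c_n:n\in\N\}\subseteq G_{(F)}$ is conjugation invariant as a mean on $G_{(F)}$, because each $g\in G_{(F)}$ commutes with all but finitely many $c_n$, so conjugation by $g$ moves only finitely many elements of $\{c_n:n\in\N\}$ and hence preserves any atomless mean supported there. Extending this mean on $\{c_n:n\in\N\}$ by zero to a mean on all of $G_{(F)}$ gives an atomless conjugation-invariant mean, which is precisely the definition of inner amenability for $G_{(F)}$. This completes the proof.

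There is essentially no obstacle here: the lemma is a formal corollary, and the only thing to be careful about is making sure the asymptotically central sequence supplied by (iii) of Lemma~\ref{lem:IAfullgroup} genuinely lies in $G_{(F)}$ (it does, by the ``moreover'' clause) and is genuinely infinite (it is, by the noncommutativity). So the statement can be recorded simply as follows.

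\begin{proof}
Since $F$ contains a point of $X_0$, part (iii) of Lemma~\ref{lem:IAfullgroup} provides two noncommuting asymptotically central sequences in $G_{(x)}$ which may be taken to lie in $G_{(F)}\cap\fA(\Gamma,X)$; in particular $G_{(F)}$ contains an asymptotically central sequence $(c_n)_{n\in\N}$, and this sequence is nontrivial (that is, $\{c_n:n\in\N\}$ is infinite) because it is one of two noncommuting such sequences. As observed before the statement of this lemma, any atomless mean on $\{c_n:n\in\N\}$, extended by zero, yields an atomless conjugation-invariant mean on $G_{(F)}$, so $G_{(F)}$ is inner amenable.
\end{proof}
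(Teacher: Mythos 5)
Your proposal is correct and follows exactly the paper's route: the paper records this lemma as an immediate consequence of the observation that a nontrivial asymptotically central sequence yields inner amenability (via an atomless mean on its terms) together with part (iii) of Lemma~\ref{lem:IAfullgroup}. Your additional care in noting that the sequence lies in $G_{(F)}$ by the ``moreover'' clause and is infinite because it is one of two noncommuting asymptotically central sequences is exactly the right bookkeeping, and matches what the paper leaves implicit.
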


We can now obtain the inner amenability of $G$ as follows. If there is a uniform finite bound on the size of $\Gamma$-orbits in $X$ then $G$ is locally finite by (i) of Lemma \ref{lem:IAfullgroup}, and hence $G$ is inner amenable in this case, since it is infinite and amenable. So assume that there is no such uniform bound. Then the set $X_0$ is nonempty by Lemma \ref{lem:IAfullgroup}. Therefore, fixing $x\in X_0$, the group $G_{(x)}$ is inner amenable by Lemma~\ref{L-acs}, so by taking $F$ in Lemma \ref{lem:GFamen} to be $\{ x \}$, and applying Lemma~\ref{lem:Folklore2} with $H=G_{(x)}$, we conclude that $G$ is inner amenable.

\medskip

Another way to verify the inner amenability in Corollary~\ref{C-inner} is as follows.
Let $\Omega$ be a finite symmetric subset of $[[ \alpha ]]$ and $\eps > 0$.
It is enough to construct a finite set $W\subseteq \fA (\alpha )$ of cardinality at least $2$ such that
$|h W h^{-1} \triangle W | / |W| < \eps$ for all $h\in\Omega$.

By the definition of the topological full group,
we can find a clopen partition $\{ P_1 , \dots , P_m \}$ of $X$
such that for each $h\in\Omega$ there exist $s_{h,1} , \dots , s_{h,m} \in \Gamma$ for
which $hx = s_{h,i} x$ for every $i=1,\dots , m$ and $x\in P_i$.
Let $\eps' > 0$, to be determined.
By the amenability of $\Gamma$ there exists a nonempty finite set
$F\subseteq \Gamma$ such that the set
$F' := F\cap \bigcap_{h\in\Omega} \bigcap_{i=1}^m (s_{h,i} F \cap s_{h,i}^{-1} F)$
satisfies $|F' |\geq (1-\eps' )|F|$. Since $G$ is infinite, we may also assume that $|F|$
is large enough for a purpose below.
As the action is topologically free there exist an $x\in X$ such that the map $t\mapsto tx$
from $F$ to $X$ is injective. By continuity we can then find a clopen neighborhood
$B$ of $x$ such that the sets $tB$ for $t\in F$ are pairwise disjoint and
for each $t\in F$ there is an $1\leq i\leq m$ such that $tB \subseteq P_i$.

Write $\fA_F$ for the alternating group on the set $F$.
To each $\sigma\in\fA_F$ we associate
an element $g_\sigma \in \fA (\alpha )$ by setting
$g_\sigma sx = \sigma (s)x$ for all $s\in F$ and $x\in B$
and $g_\sigma x = x$ for all $x\in X\setminus FB$.
Write $W$ for the set of $g_\sigma$ such that $\sigma$ is a $3$-cycle in $\fA_F$,
and $W'$ for the set of $g_\sigma$ such that $\sigma$ is a $3$-cycle in $\fA_F$
with support contained in $F'$. Then
\begin{align*}
\frac{|W'|}{|W|}
= \frac{2\binom{|F'|}{3}}{2\binom{|F|}{3}}
= \frac{|F'|(|F'|-1)(|F'|-2)}{|F|(|F|-1)(|F|-2)} ,
\end{align*}
and so by taking $\eps'$ small enough and requiring $|F|$ to be large enough
we may ensure that $|W'|/|W| > 1-\eps /2$.

Now let $h\in\Omega$.
Let $\sigma = (t_1 \ t_2 \ t_3 )$ be a $3$-cycle in $\fA_F$
whose support $\{ t_1 , t_2 , t_3 \}$ is contained in $F'$.
Let $1\leq i_1 , i_2 , i_3 \leq m$ be such that $t_k B \subseteq P_{i_k}$ for $k=1,2,3$.
Then
\begin{align*}
hg_\sigma h^{-1} (s_{h,i_1} t_1 B)
= hg_\sigma h^{-1} (h t_1 B)
= ht_2 t_1^{-1} (t_1 B)
= ht_2 B
= s_{h,i_2} t_2 B
\end{align*}
and similarly $hg_\sigma h^{-1} (s_{h,i_2} t_2 B) = s_{h,i_3} t_3 B$
and $hg_\sigma h^{-1} (s_{h,i_3} t_3 B) = s_{h,i_1} t_1 B$.
The elements $s_{h,i_1} t_1$, $s_{h,i_2} t_2$, and $s_{h,i_3} t_3$ belong to $F$
and hence are distinct given that the sets $tB$ for $t\in F$ are pairwise disjoint and
$h$ is bijective.
Moreover, $hg_\sigma h^{-1}$ acts as the identity off of
$s_{h,i_1} t_1 B \sqcup s_{h,i_2} t_2 B \sqcup s_{h,i_3} t_3 B$. Thus $hg_\sigma h^{-1} = g_\omega$
where $\omega$ is the $3$-cycle in $\fA_F$ which cyclically permutes the elements
$s_{h,i_1} t_1$, $s_{h,i_2} t_2$, and $s_{h,i_3} t_3$ in that order.
We have thus shown that $hW' h^{-1} \subseteq W$, so that
\begin{align*}
|hWh^{-1} \triangle W|
\leq |(hWh^{-1} \cup W)\setminus hW' h^{-1} |
&\leq |hWh^{-1} | + |W| - 2|hW' h^{-1}| \\
&= 2(|W| - |W'|) \\
&< \eps |W| ,
\end{align*}
as desired.

\section{Entropy and continuous orbit equivalence}\label{S-entropy}

In order to show the pairwise nonisomorphism of the alternating groups in Theorem~\ref{T-nonamenable}
of the next section, we will need Theorem~\ref{T-coe}, which says that topological entropy
is an invariant of continuous orbit equivalence for topologically free actions of finitely generated amenable groups
which are not virtually cyclic.
When the actions are free, this result is a consequence of Austin's work on
measure entropy and bounded orbit equivalence \cite{Aus16} in conjunction with the variational principle,
as freeness guarantees that the ergodic p.m.p.\ actions appearing in the variational principle are free,
so that \cite{Aus16} applies. It also follows from \cite{Aus16} when the groups are
torsion-free and the actions are minimal and have nonzero topological entropy,
for in this case the ergodic p.m.p.\ actions with nonzero entropy appearing in the
variational principle are again guaranteed to be free, this time by \cite{Wei03,Mey16}
(in both of these cases one doesn't need to assume that the groups are not virtually cyclic,
in accord with \cite{Aus16}).
This second situation is in fact all that we need for our application in Theorem~\ref{T-ent nonamenable}, but we
include a proof here to illustrate how the geometric ideas in Austin's approach can be directly applied
to the topological framework.

We begin by recalling the definition of topological entropy (see Section~9.9 of \cite{KerLi16} for more details).
Let $\Gamma$ be a countable amenable group and $\Gamma\stackrel{\alpha}{\curvearrowright} X$ an action
on a compact metrizable space. Let $d$ be a compatible metric on $X$. For a finite set $F\subseteq \Gamma$ and $\eps > 0$,
a set $A\subseteq X$ is said to be {\it $(d,\alpha ,F,\eps )$-separated} if for all distinct $x,y\in A$ there
exists an $s\in F$ such that $d(sx,sy)>\eps$. Write $\sep_d (\alpha ,F,\eps )$ for the maximum cardinality
of a $(d,\alpha ,F,\eps )$-separated subset of $X$.

Let $(F_n )$ be a F{\o}lner sequence for $\Gamma$.
The {\it topological entropy} of $\alpha$ is defined by
\begin{align*}
\htopol (\alpha ) = \sup_{\eps > 0} \limsup_{n\to\infty} \frac{1}{|F_n|} \log\sep_d (\alpha ,F_n ,\eps ) .
\end{align*}
This does not depend on the choice of F{\o}lner sequence, nor on the choice of compatible metric.

As in Section~\ref{S-Gamma}, for finite sets $E,K\subseteq \Gamma$ and a $\delta > 0$ we say that $E$ is
{\it $(K,\delta )$-invariant} if $| E \cap \bigcap_{s\in K} s^{-1} E| \geq (1-\delta )|E|$, which is equivalent
to $| \{ s\in E : Ks \subseteq E \} | \geq (1-\delta )|E|$.

\begin{lemma}\label{L-invariant}
Let $\Gamma\stackrel{\alpha}{\curvearrowright} X$ and $\Lambda\stackrel{\beta}{\curvearrowright} Y$
be continuous actions of amenable groups on compact metrizable spaces
and $\Phi : X\to Y$ a continuous orbit equivalence between $\alpha$ and $\beta$.
Let $\kappa$ and $\lambda$ be as in Definition~\ref{D-coe} with respect to $\Phi$.
Let $X_0$ be a $\Gamma$-invariant subset of $X$ such that $\Gamma$ acts freely on $X_0$ and 
$\Lambda$ acts freely on $\Phi (X_0 )$.
Let $L$ be a finite subset of $\Lambda$ and $\delta > 0$.
Let $F$ be a nonempty $(\lambda (L,Y),\delta )$-invariant finite subset of $\Gamma$.
Then for every $x\in X_0$ the subset $\kappa (F,x)$ of $\Lambda$ is $(L,\delta )$-invariant.
\end{lemma}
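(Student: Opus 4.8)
The plan is to argue pointwise in $x\in X$, exploiting that freeness forces the orbit cocycle $\kappa(\cdot,x)\colon G\to H$ to be injective with a well-understood inverse. First I would record the elementary consequences of Definition~\ref{D-coe}. Applying $\Phi^{-1}$ to the first displayed identity there and using freeness of $\alpha$ gives $\lambda(\kappa(s,x),\Phi(x))=s$ for all $s\in G$ and $x\in X$; applying $\Phi$ to the second identity and using freeness of $\beta$ gives $\kappa(\lambda(t,y),\Phi^{-1}(y))=t$ for all $t\in H$ and $y\in Y$. Hence for each fixed $x$ the maps $\kappa(\cdot,x)$ and $\lambda(\cdot,\Phi(x))$ are mutually inverse bijections between $G$ and $H$; in particular $|\kappa(E,x)|=|E|$. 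I would also note that, exactly as for $\kappa$ in the excerpt, $\lambda$ satisfies the cocycle identity $\lambda(rs,y)=\lambda(r,\beta_s y)\lambda(s,y)$, and that $\lambda(L,Y)$ is a finite subset of $G$, being the image of the compact set $L\times Y$ under the continuous map $\lambda$ into the discrete group $G$.

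The main computation is then short. Fix $x\in X$ and set $y=\Phi(x)$. For $s\in E$ and $\ell\in L$, writing $t=\kappa(s,x)$ and using the $\lambda$-cocycle identity together with $\lambda(t,y)=s$ and $\beta_t y=\beta_{\kappa(s,x)}\Phi(x)=\Phi(\alpha_s x)$, I obtain
\[
\lambda(\ell t,y)=\lambda(\ell,\beta_t y)\,\lambda(t,y)=\lambda(\ell,\Phi(\alpha_s x))\,s\in\lambda(L,Y)\,s .
\]

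Putting this together proves the lemma. Suppose $s\in E$ satisfies $\lambda(L,Y)\,s\subseteq E$. Then for every $\ell\in L$ the element $\lambda(\ell\kappa(s,x),y)$ lies in $E$ by the displayed inclusion, so $\ell\kappa(s,x)=\kappa(\lambda(\ell\kappa(s,x),y),x)\in\kappa(E,x)$; thus $L\kappa(s,x)\subseteq\kappa(E,x)$. Consequently the injective map $\kappa(\cdot,x)$ sends $\{s\in E:\lambda(L,Y)s\subseteq E\}$ into $\{t\in\kappa(E,x):Lt\subseteq\kappa(E,x)\}$, whence
\[
|\{t\in\kappa(E,x):Lt\subseteq\kappa(E,x)\}| \geq |\{s\in E:\lambda(L,Y)s\subseteq E\}| \geq (1-\delta)|E| = (1-\delta)|\kappa(E,x)| ,
\]
where the middle inequality is the $(\lambda(L,Y),\delta)$-invariance of $E$ and the last equality uses $|\kappa(E,x)|=|E|$. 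This says precisely that $\kappa(E,x)$ is $(L,\delta)$-invariant, as desired.

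I do not expect a genuine obstacle here: the content is a bookkeeping exercise with the two orbit cocycles. The only points needing minor care are deriving the two inversion identities and the $\lambda$-cocycle identity from freeness, and observing that $\lambda(L,Y)$ is actually finite so that both the hypothesis and the conclusion are meaningful.
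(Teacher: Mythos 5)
Your proof is correct and takes essentially the same route as the paper's: the paper normalizes to $Y=X$ and $\Phi=\id_X$ and phrases the key step as the orbit computation $\beta_{t\kappa(g,x)}x=\alpha_{\lambda(t,\alpha_g x)g}x\in\alpha_{Kg}x$, which after invoking freeness is exactly your cocycle identity $\lambda(\ell\kappa(s,x),y)\in\lambda(L,Y)s$. The concluding count via the bijection $g\mapsto\kappa(g,x)$ is identical in both arguments.
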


\begin{proof}
We may assume, by conjugating $\beta$ by $\Phi$, that $Y=X$ and $\Phi = \id_X$.
Let $x\in X_0$.
By the properties of $\kappa$ and $\lambda$, for every $t\in \Gamma$ and $h\in L$ we have
\begin{align*}
\beta_{h\kappa (t,x)} x = \beta_h \beta_{\kappa (t,x)} x = \beta_h \alpha_t x
= \alpha_{\lambda (h,\alpha_t x)} \alpha_t x = \alpha_{\lambda (h,\alpha_t x)t}x \in \alpha_{\lambda (L,X)t} x
\end{align*}
and hence $\beta_{L\kappa (t,x)} x \subseteq \alpha_{\lambda (L,X)t} x$.
By our freeness hypothesis the map
$t\mapsto \kappa (t,x)$ from $\Gamma$ to $\Lambda$ is bijective, and so we obtain
\begin{align*}
| \{ h\in \kappa (F,x) : Lh\subseteq \kappa (F,x) \} |
&\geq | \{ s\in F : \lambda (L,X)s\subseteq F \} | \\
&\geq (1-\delta )|F|
= (1-\delta )|\kappa (F,x)| ,
\end{align*}
that is, $\kappa (F,x)$ is $(L,\delta )$-invariant.
\end{proof}

Suppose now that $\Gamma$ finitely generated and $S$ is a symmetric generating set for $\Gamma$.
We equip $\Gamma$ with the right-invariant word metric associated to $S$,
whose value for a pair $(s,t)\in \Gamma\times \Gamma$
is the least $r\in\N$ such that $st^{-1} \in S^r$.

\begin{definition}\label{D-connected}
Let $V$ be a subset of $\Gamma$. We say $V$ is {\it connected}
if for all $v,w\in V$ there exist $s_1 ,\dots , s_n \in S$ such that
$s_n s_{n-1}\cdots s_1 v = w$ and $s_k s_{k-1} \cdots s_1 v \in V$ for every $k=1,\dots , n-1$.
For $r\in\N$, we say that $V$ is {\it $r$-separated} if the distance between any two distinct
elements of $V$ is greater than $r$, and that
a subset $C$ of $V$ is {\it $r$-spanning} for $V$ if for every
$v\in V$ there is a $w\in C$ such that
the distance between $v$ and $w$ is at most $r$.
\end{definition}

The following is Lemma~8.3 of \cite{Aus16}.

\begin{lemma}\label{L-connected}
For every finite set $F\subseteq \Gamma$ and $\delta > 0$ there exists a 
nonempty connected $(F,\delta )$-invariant finite subset of $\Gamma$.
\end{lemma}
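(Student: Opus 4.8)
The plan is to start from an arbitrary $(F,\delta)$-invariant finite set $A$ — which exists by the amenability of $G$ — and then enlarge it to a connected set while keeping control of the invariance. First I would observe that we may assume $F\ni 1_G$ and that $F$ is symmetric (replace $F$ by $F\cup F^{-1}\cup\{1_G\}$, shrinking $\delta$ correspondingly). The key reduction is this: if $A$ is $(F,\delta')$-invariant for a suitable $\delta'$, then the connected component $V$ of $A$ (with respect to the word metric of $S$) that contains the largest fraction of the ``core'' $\{s\in A : Fs\subseteq A\}$ will itself be $(F,\delta)$-invariant, and is connected by construction. The point is that distinct connected components of $A$ are separated by distance at least $2$ in the Cayley graph, so a group element $g$ with $Fg\subseteq A$ lies in exactly one component together with all of $Fg$; hence the cores of the components partition the core of $A$, and the components are ``$F$-separated'' from each other in the sense that no translate $Fg$ straddles two of them.

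Carrying this out: fix $\eta>0$ small (to be determined from $\delta$), and by amenability choose a finite $A\subseteq G$ that is $(F,\eta)$-invariant, so $|A_F|\geq(1-\eta)|A|$ where $A_F:=\{s\in A : Fs\subseteq A\}$. Write $A=\bigsqcup_{i}V_i$ as a disjoint union of its connected components. For each $g\in A_F$ we have $Fg\subseteq A$; since $F$ is connected (it contains $1_G$ and $S\subseteq F$, using that $F$ is symmetric and generating after the reduction above — if $S\not\subseteq F$ enlarge $F$ once more), the set $Fg$ is connected and therefore lies inside a single component $V_{i(g)}$, and moreover $g\in V_{i(g)}$. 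Thus $A_F=\bigsqcup_i (A_F\cap V_i)$, and for each $i$ every element of $A_F\cap V_i$ has its entire $F$-translate inside $V_i$, i.e.\ $A_F\cap V_i\subseteq (V_i)_F$. Pick the index $i_0$ maximizing $|A_F\cap V_{i_0}|/|V_{i_0}|$; a standard averaging argument over the partition gives $|A_F\cap V_{i_0}|\geq(1-\eta)|V_{i_0}|$ — indeed if every component had $|A_F\cap V_i|<(1-\eta)|V_i|$ then summing would give $|A_F|<(1-\eta)|A|$, a contradiction. Hence $|(V_{i_0})_F|\geq|A_F\cap V_{i_0}|\geq(1-\eta)|V_{i_0}|$, so $V_{i_0}$ is $(F,\eta)$-invariant. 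Taking $\eta=\delta$ (and using that $(F,\eta)$-invariance in the $\{s:Fs\subseteq\cdot\}$ formulation is equivalent to the $|\cdot\cap\bigcap_{s\in F}s^{-1}\cdot|$ formulation up to adjusting constants, as noted in the excerpt), $V_{i_0}$ is the desired connected $(F,\delta)$-invariant finite set.

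The main obstacle is the bookkeeping around which ``$F$'' to use: one must ensure the set one connects with respect to (namely $S$, or $S^r$ for the $r$-connected variant) is small enough relative to the set $F$ appearing in the invariance hypothesis, so that $F$-translates cannot jump between components. This is why the reduction to $S\subseteq F$ (equivalently, first enlarging $F$ to $F\cup S\cup\{1_G\}$ and shrinking $\delta$) is essential, and it is the only genuinely non-formal step; everything else is the averaging argument over components, which is routine. For the $r$-connected statement in Definition~\ref{D-connected}, one runs the identical argument with $S$ replaced by $S^r$ throughout, enlarging $F$ to contain $S^r$ at the outset.
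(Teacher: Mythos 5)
The paper gives no proof of this lemma at all --- it simply cites Lemma~3.3 of \cite{Aus16} --- and your argument is essentially the standard proof of that statement: take a sufficiently invariant F{\o}lner set $A$, decompose it into connected components, observe that the cores $A_F\cap V_i$ partition $A_F$ with $A_F\cap V_i\subseteq (V_i)_F$, and average over the partition $A=\bigsqcup_i V_i$ to find a component $V_{i_0}$ with $|(V_{i_0})_F|\geq (1-\eta)|V_{i_0}|$. That skeleton is correct. One small aside: the two formulations of $(F,\delta)$-invariance are not merely ``equivalent up to adjusting constants''; the sets $E\cap\bigcap_{s\in F}s^{-1}E$ and $\{t\in E: Ft\subseteq E\}$ are literally equal, so nothing needs adjusting there.

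The one genuine slip is your justification that $F$ is connected. Containing $1_G$ and $S$ does not make a finite set connected: $F$ could contain an element at word-distance $100$ from $1_G$ with none of the intermediate points present, and then $Fg$ need not lie in a single component of $A$, which is exactly the fact your partition of $A_F$ relies on. The correct reduction is to enlarge $F$ to the ball $\{1_G\}\cup S\cup S^2\cup\cdots\cup S^R$ with $R=\max_{f\in F}|f|_S$: balls are connected (walk out from $1_G$ along a geodesic word), enlarging $F$ only strengthens the invariance conclusion for the original $F$, and translating on the side opposite to the one used for the Cayley-graph adjacency preserves connectedness, so $Fg$ is connected and hence lies in the component of $g$ (which it contains since $1_G\in F$). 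With that one-line repair, and the same argument with $S$ replaced by $S^r$ for the $r$-connected variant, the proof is complete.
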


The following is a slight variation of Corollary~8.6 of \cite{Aus16} and follows by the same argument,
which we reproduce below mutatis mutandis.

\begin{lemma}\label{L-tree}
Suppose that $\Gamma$ is not virtually cyclic.
Then there is a constant $b>0$ such that for every $r\in\N$
and every connected finite set $F\subseteq \Gamma$ with $|S^r F|\leq 2|F|$
there exists a $2r$-spanning tree in $S^r F$ with at most $b|F|/r$ vertices.
\end{lemma}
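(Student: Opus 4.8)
The plan is to reproduce Austin's argument for Corollary~8.6 of \cite{Aus16}, with the minor adjustments forced by our hypotheses. The one genuinely non-routine ingredient is a growth estimate that encodes the hypothesis that $G$ is not virtually cyclic: since $G$ is finitely generated and not virtually cyclic, there is a constant $\kappa>0$, depending only on $G$ and the chosen generating set $S$, such that $|S^n|\ge\kappa n^2$ for every $n\in\N$. Indeed, a finitely generated group whose growth is not bounded below by a positive multiple of $n^2$ is virtually cyclic (this follows from Gromov's polynomial growth theorem and the determination of the exact polynomial growth degree of virtually nilpotent groups, together with the observation that superpolynomial growth eventually dominates $n^2$ while $|S^n|$ is positive for small $n$); alternatively one may simply quote the corresponding geometric lemma preceding Corollary~8.6 in \cite{Aus16}. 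This quadratic lower bound is the only point at which the non-virtually-cyclic assumption enters.

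Granting this, I would first choose a subset $C\subseteq F$ that is maximal among the $r$-separated subsets of $F$. By maximality every element of $F$ lies within distance $r$ of $C$, so $C$ is $r$-spanning, hence a fortiori $2r$-spanning, for $F$. The balls of radius $\lfloor r/2\rfloor$ centred at the points of $C$ are pairwise disjoint, and each of them is contained in $S^rF$ because $C\subseteq F$; therefore
\[
|C|\cdot|S^{\lfloor r/2\rfloor}|\ \le\ |S^rF|\ \le\ 2|F| .
\]
Combining this with $|S^{\lfloor r/2\rfloor}|\ge\kappa\lfloor r/2\rfloor^2$ yields $|C|\le b_0\,|F|/r^2$, and in particular $|C|\le b_0\,|F|/r$, for a constant $b_0$ depending only on $G$ and $S$.

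Finally I would organize $C$ into a tree situated inside $F$. Since $F$ is connected, any two points of $C$ can be joined by a path lying in $F$, and reading off, in order and at steps of size at most $2r$, points of $C$ lying within distance $r$ of such a path shows that $C$ becomes a connected graph once we join two of its points whenever their distance is at most a fixed multiple of $r$. A spanning tree $T$ of this graph then has vertex set $C$, hence at most $b|F|/r$ vertices, and it remains $2r$-spanning for $F$. The point requiring care — and the reason the non-virtually-cyclic hypothesis is needed here beyond the counting above — is that the edges of $T$ must be realizable by paths of length $O(r)$ that do not leave $F$: one has to rule out that $F$ is ``pinched'' at scale $r$ in a way that would force long detours (precisely what can happen in the virtually cyclic case). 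This is the geometric content of Section~8 of \cite{Aus16}, where one plays the F\o lner-type inequality $|S^rF|\le2|F|$ off against the quadratic growth of $G$; adapting that analysis to the present topological setting is the main obstacle, the covering-and-separation estimates above and the assembly of the tree from them being routine.
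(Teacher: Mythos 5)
Your proposal follows the same route as the paper's proof, which itself reproduces Corollary~8.6 of \cite{Aus16}: the non-virtually-cyclic hypothesis enters only through the quadratic growth bound $|S^r|\geq cr^2$ (Corollary~3.5 of \cite{Man12}), and the first two steps --- a maximal $r$-separated set $V\subseteq F$ together with the ball-counting estimate $|V|\leq b_0|F|/r^2$ played off against $|S^rF|\leq 2|F|$ --- match the paper's (your use of balls of radius $\lfloor r/2\rfloor$ is in fact the more careful version of the disjointness claim).

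The gap is in the last step, and you have half-diagnosed it yourself. The lemma asks for a $2r$-spanning \emph{tree in} $F$, i.e.\ a subtree of the Cayley graph induced on $F$, whose consecutive vertices differ by a single generator; this is exactly what the application in Theorem~\ref{T-coe} uses when it writes $s_k=v_kv_{k-1}^{-1}\in S$ along directed paths in the tree. Your object --- an abstract spanning tree on $C$ whose edges are pairs at distance $O(r)$ --- is not such a tree, and you explicitly defer ``the main obstacle'' (realizing those edges by short paths inside $F$) to \cite{Aus16} rather than resolving it. The paper closes this step as follows: form the graph on $V$ whose edges are the pairs of points joinable \emph{within} $F$ by a path of length at most $4r+1$ (asserted to be connected because $F$ is connected), choose a spanning tree of it, replace each tree edge by a realizing path in $F$, and pass to a spanning tree of the resulting connected subgraph of $F$. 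This costs a factor of $4r+1$ in the vertex count, turning $|V|\leq b_0|F|/r^2$ into the stated bound $b|F|/r$; your parenthetical ``in particular $|C|\leq b_0|F|/r$'' obscures the fact that the extra factor of $r$ is genuinely consumed by the intermediate path vertices, which must be counted as vertices of the tree. Note also that, contrary to your final paragraph, the non-virtually-cyclic hypothesis plays no second role at this stage in the paper's argument --- no separate ``pinching'' analysis is invoked once the auxiliary graph's edges are defined via paths lying inside $F$; the hypothesis is used once, for the growth estimate.
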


\begin{proof}
Since $\Gamma$ is not virtually cyclic there exists
a $c>0$ such that $|S^r | \geq cr^2$ for all $r\in\N$ (Corollary~3.5 of \cite{Man12}).
Suppose now that we are given an $r\in\N$ and a nonempty 
connected finite set $F\subseteq \Gamma$ with $|S^r F|\leq 2|F|$.
Take a maximal $r$-separated set $V\subseteq F$. Then
\begin{align*}
|V||S^r| = \Big| \bigsqcup_{g\in V} S^r g\Big| \leq |S^r F| \leq 2|F|
\end{align*}
and so
\begin{align*}
|V| \leq \frac{2|F|}{|S^r|} \leq \frac{2}{cr^2} |F| .
\end{align*}
By maximality, $V$ is $2r$-spanning for $F$. Consider now the graph on $V$ whose edges are those
pairs of points which can be joined, within $S^r F$, by an $S$-path of length at most $4r+1$.
This graph is connected because $F$ is connected
(given any two vertices in $V$, join them by an $S$-path in $F$ and then lengthen this path
within $S^r F$ by inserting at each of its non-endpoint vertices $w$
a loop consisting of a path of length at most $r$ joining $w$ to a vertex in $V$,
followed by the reversal of this path).
We can thus find a spanning tree $(V,E)$ within the graph.
For each pair $(s,t)$ in $E$ choose an $S$-path in $S^r V$ 
of length at most $4r+1$ from $s$ to $t$ and write $E'$ for
the collection of all pairs that appear as an edge in one of these paths. The graph consisting of all of the vertices
in $E'$ and with $E'$ as its edge set is connected and hence contains a spanning tree, and 
if $W$ denotes the vertex set of this spanning tree then 
\begin{align*}
|W| \leq |E'| + 1\leq (4r+1)|E| + 1 \leq 5r|V| \leq 5r\cdot \frac{2}{cr^2} |F| = \frac{10}{cr} |F| .
\end{align*}
We may thus take $b = 10/c$.
\end{proof}

\begin{theorem}\label{T-coe}
Let $\Gamma$ and $\Lambda$ be finitely generated amenable groups which are not 
virtually cyclic and let
$\Gamma\stackrel{\alpha}{\curvearrowright} X$ and $\Lambda\stackrel{\beta}{\curvearrowright} Y$
be topologically free actions on compact metrizable spaces which are continuously orbit equivalent.
Then $\htopol (\alpha ) = \htopol (\beta )$.
\end{theorem}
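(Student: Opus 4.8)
The plan is to prove $\htopol (\alpha) = \htopol (\beta)$ by showing, via symmetry, that $\htopol (\beta) \geq \htopol (\alpha)$, after first reducing to the finitely generated case. Given $\eta > 0$, apply Lemma~\ref{L-fin gen} to $\alpha$ to obtain a finite $K \subseteq G$ so that any subgroup $H' \supseteq K$ has $\htopol (\alpha|_{H'}) \geq \htopol (\alpha) - \eta$; since $G$ is not \emph{locally} virtually cyclic we may enlarge $K$ so that the subgroup $G_0 := \langle K \rangle$ is finitely generated and \emph{not} virtually cyclic. The cocycle $\lambda$ pushes forward the orbit equivalence to the $G_0$-orbits of a clopen partition of $X$; more precisely, for a fine enough clopen partition $\mathcal P$ of $X$ the restriction of $\kappa$ to $G_0 \times X$ takes values in a finite subset of $H$, and one gets an induced continuous orbit equivalence between $\alpha|_{G_0}$ and the restriction of $\beta$ to the subgroup $H_0$ of $H$ generated by $\kappa (S_{G_0}, X)$ together with $\lambda$-values. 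After this reduction it suffices to prove the inequality for two finitely generated, non-virtually-cyclic amenable groups $G, H$ with a continuous orbit equivalence $\Phi$; note topological freeness plus minimality-type arguments are not needed, only freeness after passing to the groupoid reduction, which holds since $\alpha, \beta$ are topologically free (so $\kappa, \lambda$ are uniquely determined and satisfy the cocycle identities on a dense invariant set, and by continuity everywhere).

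For the main estimate, fix a compatible metric $d$ on $X$ and transport it to $Y$ via $\Phi$. Fix $\eps > 0$ witnessing topological entropy of $\alpha$ via separated sets. Choose a finite $L \subseteq H$ and $\delta > 0$; by amenability of $G$ and Lemma~\ref{L-connected} take a \emph{connected} $(\lambda (L,Y) \cup K', \delta)$-invariant finite set $E \subseteq G$ with $|S^r E| \leq 2|E|$ for a suitable $r$ depending on $\delta$ (such $E$ exist because F{\o}lner sets can be taken connected and with slow boundary growth in the word metric). Let $A \subseteq X$ be a maximal $(d, \alpha, E, \eps)$-separated set, so $|A| \geq \sep_d (\alpha, E, \eps) \approx e^{|E| \htopol (\alpha)}$. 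For each $x \in X$ set $F_x := \kappa (E, x) \subseteq H$; by Lemma~\ref{L-invariant}, $F_x$ is $(L, \delta)$-invariant, and $|F_x| = |E|$ by freeness. The geometric heart, following Austin, is to compare separation of points of $A$ under $\alpha$ over $E$ with separation under $\beta$ over a \emph{single} large F{\o}lner set of $H$: Lemma~\ref{L-tree} provides a $2r$-spanning tree $\mathcal T_x$ in $E$ with at most $b|E|/r$ vertices; along the tree one tracks the $\kappa$-values, and the configuration of $\alpha$-images of $x$ over $E$ is determined, up to a controlled amount of data of size roughly $(b|E|/r) \log |S^r|$ (the values of $\kappa$ at the tree vertices) plus the $\beta$-images over $\bigcup_{x} F_x$, which lies in a fixed F{\o}lner-type set $F$ of $H$ once we quantify over the finitely many values. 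This yields $\sep_d (\alpha, E, \eps) \leq C^{b|E|/r} \cdot \sep_{d} (\beta, F, \eps')$ for an appropriate $\eps'$, hence $\htopol (\alpha) \leq (b/r)\log C + \htopol (\beta)$ after dividing by $|E| \approx |F|$ and passing to the limit along F{\o}lner sequences.

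Letting $r \to \infty$ (which is legitimate since for each $r$ we can find connected F{\o}lner sets with $|S^r E| \leq 2|E|$, as $G$ is amenable and non-virtually-cyclic so balls grow polynomially at least quadratically but F{\o}lner sets can be arranged with $|\partial_{S^r} E| = o(|E|)$) kills the error term $(b/r)\log C$, giving $\htopol (\alpha) \leq \htopol (\beta)$. Combined with the reduction, $\htopol (\alpha) - \eta \leq \htopol (\alpha|_{G_0}) \leq \htopol (\beta|_{H_0}) \leq \htopol (\beta)$ for every $\eta > 0$, so $\htopol (\alpha) \leq \htopol (\beta)$; by symmetry they are equal.

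\textbf{Main obstacle.} The crux is the combinatorial/geometric counting step transplanted from \cite{Aus16}: carefully bookkeeping how much information is needed to recover the full $\alpha$-itinerary of a point over $E$ from its $\beta$-itinerary over the transported set $F$ plus the $\kappa$-data along a spanning tree, and checking that the ``extra'' information is of size $o(|E|)$ as $r \to \infty$ uniformly in $x$. One must be careful that $F$ (or a fixed enlargement of it) can be chosen \emph{independent} of $x \in A$ --- this uses that $\kappa$ is continuous hence takes finitely many values on $E \times X$, so $\bigcup_{x\in X} \kappa(E,x)$ is already a fixed finite set --- and that the metric comparison ($\eps$ versus $\eps'$) is handled by the uniform continuity of $\Phi$ and $\Phi^{-1}$. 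The amenability inputs (existence of connected F{\o}lner sets with controlled $S^r$-thickening, Lemma~\ref{L-connected} and the non-virtually-cyclic growth bound from Lemma~\ref{L-tree}) are exactly the same ones Austin uses, so the adaptation is geometric rather than conceptual, but writing the itinerary-reconstruction inequality cleanly in the topological (separated-set) language is where the real work lies.
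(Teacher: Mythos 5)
Your middle paragraph reproduces the geometric core of the paper's argument (connected F{\o}lner sets, Lemma~\ref{L-invariant} for the invariance of the image sets, the spanning tree of Lemma~\ref{L-tree}, counting the $\kappa$-data along the tree, and a pigeonhole to transfer separated sets), but your framing of the reduction to the finitely generated case contains a genuine gap. First, there is no ``induced continuous orbit equivalence between $\alpha|_{G_0}$ and $\beta|_{H_0}$'': the set $\kappa(G_0,x)$ is in general neither a subgroup of $H$ nor the $x$-orbit of any subgroup $H_0$, so restricting both actions and invoking the theorem for the restricted pair is not available. Second, and fatally, your closing chain uses $\htopol(\beta|_{H_0})\leq\htopol(\beta)$, which is false: restricting an action to an infinite-index subgroup can increase topological entropy without bound (e.g.\ the full shift $\{0,1\}^{\Z^2}$ restricted to $\Z\times\{0\}$ has infinite entropy). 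The paper never restricts $\beta$. Instead it fixes an arbitrary finite $L\subseteq H$ and $\delta>0$ \emph{in advance}, chooses $G_0$ to contain $\lambda(L,X)$, and uses Lemma~\ref{L-invariant} to conclude that the single image set $E=\kappa(F,x_0)\subseteq H$ is $(L,\delta)$-invariant in $H$ itself; since $(L,\delta)$ was arbitrary, the resulting separated-set estimate lower-bounds $\htopol(\beta)$ computed over the full group $H$. Your own middle paragraph already contains all the ingredients for this; the error is only in the packaging of the final inequalities, but as written the proof does not close.

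A secondary caution: in your ``Main obstacle'' paragraph you propose handling the dependence of $F_x=\kappa(E,x)$ on $x$ by replacing it with the finite union $\bigcup_x\kappa(E,x)$. That does not suffice. Separation of $x$ and $y$ under $\alpha$ at $g\in E$ gives $d(\beta_{\kappa(g,x)}x,\beta_{\kappa(g,y)}y)>\eps$ with possibly different group elements, so it does not yield $\beta$-separation over any set; and the normalization $|E|^{-1}$ requires the image set to have the same cardinality as $E$ (via freeness on the set of points with trivial stabilizer), not the cardinality of a union. The correct mechanism is the one you sketch earlier: pigeonhole over the at most $|\cP|^{b|F|/r}=e^{o(|F|)}$ possible $\kappa$-labelings of the spanning tree to extract a subset $A_0$ on which $v\mapsto\kappa(v,x)$ is constant at the tree vertices, then use the $2r$-spanning property and uniform continuity to pass from $(F,\eps)$-separation to $(V,\eps')$-separation and hence to $(W,\eps')$-separation under $\beta$, where $W$ is the common image.
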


\begin{proof}
We show that $\htopol (\alpha ) \leq \htopol (\beta )$, which by symmetry is enough to prove the theorem.
We may assume that $\htopol (\alpha ) > 0$. We will also assume that
$\htopol (\alpha ) < \infty$, the argument for $\htopol (\alpha ) = \infty$ being the same
subject to obvious modifications.

We may assume that $Y=X$ and that $\id_X$ is a continuous orbit equivalence
by conjugating $\beta$ by a continuous orbit equivalence between $\alpha$ and $\beta$.
Let $\kappa : \Gamma\times X\to \Lambda$ and $\lambda : \Lambda\times X\to \Gamma$ be the associated cocycles.
Let $X_0$ be the dense $G_\delta$ subset of $X$ consisting of those points which
have trivial stabilizer for $\alpha$ and $\beta$. This set is invariant for both $\alpha$ and $\beta$,
as these actions share the same orbits.
Fix a compatible metric $d$ on $X$.

Set $h = \htopol (\alpha )$ for brevity. Let $\eta > 0$.
Fix a finite symmetric generating set $S$ for $\Gamma$ containing $1_\Gamma$ and endow 
$\Gamma$ with the associated word-length metric. Take a $b>0$ as given by Lemma~\ref{L-tree}
with respect to this metric. Since $\kappa$ is continuous we can find a clopen partition $\cP$ of $X$
such that for every $s\in S$ the map $x\mapsto \kappa (s,x)$ is constant on each member of $\cP$.
Choose an $r\in\N$ large enough so that $(b/r)\log |\cP | \leq\eta$.

Let $\eps > 0$. 
Let $L$ be a finite symmetric generating set for $\Lambda$ and let $\delta > 0$.
Take a nonempty finite set $F\subseteq \Gamma$ which is sufficiently left invariant so that
the set $F' := S^r F$ is $(\lambda (L,X),\delta )$-invariant and has cardinality at most $2|F|$, 
and also chosen so that $|F'|^{-1} \log \sep_d (\alpha ,F',\eps ) > h - \eta$, which is possible
since we can force $F'$ to be as left invariant as we wish.
By Lemma~\ref{L-connected} we may assume that $F$ is connected.

By uniform continuity there is an $\eps' > 0$ such that
if $x,y\in X$ satisfy $d(x,y) < \eps'$ then $d(sx,sy) < \eps$ for all $s\in S^{3r}$.
By our choice of $F$ there exists a $(d ,\alpha ,F',\eps )$-separated set $A\subseteq X$
of cardinality at least $e^{(h-\eta )|F'|}$, and by making small perturbations we may assume that $A\subseteq X_0$.

By our choice of $b$ and $F$ there exists a $2r$-spanning tree in $F'$ whose vertex set $V$ satisfies $|V|\leq b|F|/r$.
Root this tree at some arbitrarily chosen vertex $v_0$. 
By replacing $F$ with $Fv_0^{-1}$ and $A$ with its image under $\alpha_{v_0}$,
we may assume that $v_0 = 1_\Gamma$.
Consider a directed path $(v_0 , v_1 , \dots , v_n )$
in the tree which starts at $v_0 = 1_\Gamma$. 
Set $s_k = v_k v_{k-1}^{-1} \in S$ for $k=1,\dots ,n$. Making repeated use of
the cocycle identity we have, for every $x\in X$,
\begin{align*}
\kappa (v_n ,x)
= \kappa (s_n \cdots s_1 ,x)
= \prod_{k=0}^{n-1} \kappa (s_{n-k} , s_{n-k-1} \cdots s_1 x) .
\end{align*}
This shows that the collection of all maps from $V$ to $\Lambda$ of the form $v\mapsto\kappa (v,x)$ for some $x\in X$
has cardinality at most $|\cP |^{|V|}$, which is bounded above by $|\cP |^{b|F|/r} = e^{(b/r)(\log |\cP |)|F|}$,
which in turn is bounded above by $e^{\eta |F|}$ by our choice of $r$.
It follows by the pigeonhole principle that we can find an $A_0 \subseteq A$
with $|A_0| \geq e^{-\eta |F|} |A| \geq e^{(h-2\eta )|F'|}$ such that
the map $v\mapsto \kappa (v,x)$ from $V$ to $\Lambda$ is the same for every $x\in A_0$. Write $W$
for the common image of these maps.

Choose an $x_0 \in A_0$ and set $E = \kappa (F',x_0 )\subseteq\Lambda$. 
Since $A_0 \subseteq X_0$ and $\alpha$ and $\beta$ act freely on $X_0$, 
the set $E$ is $(L,\delta )$-invariant by Lemma~\ref{L-invariant}
and has the same cardinality as $F'$. Since $A_0$ is $(d ,\alpha ,F',\eps )$-separated
and $V$ is $2r$-dense in $F$ and hence $3r$-dense in $F'$, 
the set $A_0$ is $(d ,\alpha ,V,\eps' )$-separated, and hence also
$(d ,\beta ,W,\eps' )$-separated.
Consequently
\begin{align*}
\frac{1}{|E|} \log\sep_d (\beta ,E,\eps' )
\geq \frac{1}{|E|} \log\sep_d (\beta ,W,\eps' )
\geq \frac{1}{|F'|} \log |A_0 |
\geq h-2\eta .
\end{align*}
Since $L$ is a symmetric generating set we can force $E$ to be as left invariant as we wish
by taking $\delta$ sufficiently small, and since $\eps'$ does not depend on $\delta$
we thus obtain $\htopol (\beta ) \geq h-2\eta$.
Letting $\eta\to 0$ we conclude that $\htopol (\beta ) \geq h$, as desired.
\end{proof}

\section{C$^*$-simple alternating groups}\label{S-nonamenable 2}

We begin by formulating a property of a countable group $\Gamma$, called property ID,
that will permit us to carry out the construction
in the proof of Theorem~\ref{T-nonamenable}. Among the $\Gamma$ which satisfy this property
are those that are residually finite and contain a nontorsion element
with infinite conjugacy class, as we show in Theorem~\ref{T-res finite}.

By a {\it tiling} of $\Gamma$ we mean a finite collection $\{ (S_i ,C_i ) \}_{i\in I}$ of pairs where
the $S_i$ are nonempty finite subsets of $\Gamma$
({\it shapes})
and the $C_i$ are subsets of $\Gamma$ ({\it center sets})
such that $\Gamma$ partitions as $\bigsqcup_{i\in I} \bigsqcup_{c\in C_i} S_i c$.
The sets $S_i c$ in this disjoint union are called {\it tiles}.
The tiling is a {\it monotiling} if the index set $I$ is a singleton.

By a {\it tightly nested} sequence $(\cT_n )$ of tilings we mean that, for every $n>1$ and $(S,C)\in\cT_n$,
the shape $S$ can be expressed as a partition $\bigsqcup_{j\in J} S_j g_j$,
where the $S_j$ are shapes of $\cT_{n-1}$ and the $g_j$ are elements of $\Gamma$,
such that for every $c\in C$ and $j\in J$ the element $g_j c$ is a tiling center for the shape $S_j$
(in which case $\bigsqcup_{j\in J} S_j g_j c$ is a partition of $Sc$ into tiles of $\cT_{n-1}$).
Observe that if the sequence $(\cT_n )$ is tightly nested then all of its subsequences are as well.

We call $(\cT_n )$ a {\it sequence of F{\o}lner tilings} if for every finite set $F\subseteq \Gamma$
and $\delta > 0$ there is an $N\in\N$ such that for every $n\geq N$ each shape $S$ of the
tiling $\cT_n$ is $(F,\delta )$-invariant, i.e.,
$|S\cap\bigcap_{t\in F} St^{-1} | \geq (1-\delta )|S|$.

As in Section~3 of \cite{DowHucZha16},
to each tiling $\{ (S_i , C_i ) \}_{i\in I}$ of $\Gamma$ we associate an action $\alpha$
of $\Gamma$ as follows. Write $Y = \{ S_i : i\in I \} \cup \{ 0 \}$
and define $x\in Y^\Gamma$ by $x_t = S_i$ if $t\in C_i$
and $x_t = 0$ otherwise.
The action $\alpha$ is then the restriction of the right shift action
$\Gamma\curvearrowright Y^\Gamma$
to the closed $\Gamma$-invariant set $\overline{\Gamma x}$.
The {\it entropy} of the tiling is defined to be the topological entropy of this subshift action.

A set $C\subseteq\Gamma$ is said to be {\it syndetic} if there is a finite set $F\subseteq\Gamma$
for which $FC = \Gamma$.

\begin{definition}
We say that $\Gamma$ has {\it property ID} if there exist an infinite cyclic subgroup $H\subseteq\Gamma$
of infinite index and a tightly nested sequence $(\cT_n )$ of F{\o}lner tilings with syndetic center sets
and entropies converging to zero such that
\begin{enumerate}
\item for every finite set $F\subseteq \Gamma$ there is an $n\in\N$ such that
$F$ is contained in some tile of $\cT_n$,

\item for every $\delta > 0$ there
is an $n_0 \in\N$ such that for all $n\geq n_0$ one has $|H\cap T| \leq \delta |T|$
for every tile $T$ of $\cT_n$, and

\item for every $n\in\N$ there are a shape $S$ of $\cT_n$ and a subgroup $H_0$ of $H$ such that
the sets $Sc$ for $c\in H_0$ are tiles of $\cT_n$ whose union contains $H$.
\end{enumerate}
\end{definition}

\begin{example}
$\Z^d$ for $d\geq 2$ is the prototype of a group with property ID.
\end{example}

\begin{example}\label{E-product}
More generally, if $\Gamma$ is a countable amenable group of the form $\Gamma_0 \times \Z$ where $\Gamma_0$ is infinite,
then $\Gamma$ has property ID. To verify this we can use tilings which are products (in the obvious sense)
of a suitable tiling of $\Gamma_0$, as given by Theorem~3.5 of \cite{Dou17},
and a monotiling of $\Z$ by translates of an interval of the form $\{ -n,-n+1,\dots ,n \}$.
\end{example}

In another direction of generalization we have Theorem~\ref{T-res finite} below.
To establish this we need the following lemma.

\begin{lemma}\label{L-zero}
Let $N_1 \supseteq N_2 \supseteq\dots$ be a decreasing sequence of finite-index normal subgroups of $\Gamma$
with intersection $\{ 1_\Gamma \}$.
Let $a$ be an element of $\Gamma$ with infinite conjugacy class, and set $H = \langle a \rangle$.
Then $|\Gamma/N_k |/|HN_k /N_k | \to \infty$ as $k\to\infty$.
\end{lemma}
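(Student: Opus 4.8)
The plan is to recast the claimed limit in terms of the indices $[\Gamma : HN_k]$. Since $N_k$ is a finite-index normal subgroup, $HN_k$ is a subgroup of $\Gamma$ (being the product of a subgroup and a normal subgroup), and it has finite index as it contains $N_k$; multiplicativity of the index then gives $|\Gamma/N_k|/|HN_k/N_k| = [\Gamma : N_k]/[HN_k : N_k] = [\Gamma : HN_k]$. So it suffices to prove that $[\Gamma : HN_k]\to\infty$.

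First I would note that this sequence of indices is non-decreasing: from $N_{k+1}\subseteq N_k$ we get $HN_{k+1}\subseteq HN_k$, hence $[\Gamma : HN_k]\le [\Gamma : HN_{k+1}]$. A non-decreasing sequence of positive integers that does not tend to infinity is eventually constant, so if the conclusion failed there would be a $k_0$ with $[\Gamma : HN_k] = [\Gamma : HN_{k_0}]$ for all $k\ge k_0$; since the subgroups $HN_k$ are then nested with the same finite index they would all coincide with a fixed subgroup $L := HN_{k_0}$.

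The key step is to deduce that such an $L$ must be abelian. For each $k\ge k_0$ the second isomorphism theorem gives $L/N_k = HN_k/N_k \cong H/(H\cap N_k)$, which is cyclic because $H$ is cyclic. Hence for any $x,y\in L$ the commutator $xyx^{-1}y^{-1}$ lies in $N_k$ for every $k\ge k_0$, and therefore in $\bigcap_k N_k = \{1_\Gamma\}$; thus $L$ is abelian. Since $a\in H\subseteq L$ and $L$ is abelian, $L$ lies in the centralizer $C_\Gamma(a)$, so $[\Gamma : C_\Gamma(a)]\le [\Gamma : L]<\infty$ and the conjugacy class of $a$, whose cardinality equals $[\Gamma : C_\Gamma(a)]$, would be finite, contradicting the hypothesis. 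This contradiction yields $[\Gamma : HN_k]\to\infty$, as required.

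I do not anticipate a genuine obstacle here; the only mildly delicate points are the implication that $L/N_k$ being cyclic for all large $k$ together with $\bigcap_k N_k = \{1_\Gamma\}$ forces $L$ to be abelian, and the standard fact that a finite-index abelian subgroup containing $a$ makes the conjugacy class of $a$ finite. Note that amenability of $\Gamma$ plays no role in this argument.
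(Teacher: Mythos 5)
Your proof is correct. It reaches the same contradiction as the paper -- a finite-index abelian subgroup of $\Gamma$ containing $a$ would force the conjugacy class of $a$ to be finite -- but it gets there by a more elementary route. The paper embeds $\Gamma$ into the profinite group $P = \varprojlim \Gamma/N_k$ (using $\bigcap_k N_k = \{1_\Gamma\}$), identifies the limit with the index $[P : \overline{H}]$ of the closure of $H$, and invokes the fact that the closure of an abelian subgroup of a topological group is abelian. You instead stay entirely inside $\Gamma$: you rewrite the quantity as $[\Gamma : HN_k]$, observe it is non-decreasing, and show that if it stabilized then the stabilized subgroup $L = HN_{k_0}$ would be abelian because each quotient $L/N_k \cong H/(H\cap N_k)$ is cyclic and $\bigcap_k N_k$ is trivial. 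Your $L$ is exactly $\Gamma \cap \overline{H}$ in the paper's setup, and your commutator argument is the concrete shadow of ``closures of abelian subgroups are abelian.'' What each approach buys: the profinite phrasing is a compact one-liner for readers comfortable with inverse limits, while your version is self-contained, avoids setting up the completion, and makes the monotonicity of the indices (which the paper leaves implicit in the word ``limit'') explicit. Both are complete; your observation that amenability of $\Gamma$ is irrelevant here is also accurate.
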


\begin{proof}
We may naturally identify $\Gamma$ with a subgroup of the profinite group $P = \varprojlim \Gamma /N_k$,
in which case the limit $\lim _{k\to\infty}|\Gamma /N_k |/|HN_k / N_k |$ coincides with the index of the closure, $\overline{H}$,
of $H$ in $P$. The group $\overline{H}$ is abelian since $H$ is, and so if $\overline{H}$ were of finite index in $P$
then $a$ would commute with the finite-index subgroup $\Gamma \cap \overline{H}$ of $\Gamma$, which is impossible since
the conjugacy class of $a$ in $\Gamma$ is infinite.
\end{proof}

\begin{theorem}\label{T-res finite}
Let $\Gamma$ be a residually finite countable amenable group that contains a nontorsion element
with infinite conjugacy class.
Then $\Gamma$ has property ID.
\end{theorem}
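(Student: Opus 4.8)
The plan is to exhibit, for a suitable infinite cyclic subgroup $H\le\Gamma$, a tightly nested sequence $(\cT_n)$ of \emph{periodic} F{\o}lner monotilings; periodicity makes the requirement that the entropies tend to $0$ automatic, since a periodic coloring has a finite orbit closure and hence zero topological entropy.

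\emph{Setup.} Using residual finiteness, I would first fix a descending chain $N_1\supseteq N_2\supseteq\cdots$ of finite-index normal subgroups of $\Gamma$ with $\bigcap_kN_k=\{1_\Gamma\}$, let $a$ be a nontorsion element with infinite conjugacy class, and put $H=\langle a\rangle$. Then $H$ has infinite index: if it did not, then since $H\subseteq HN_k$ we would have $[\Gamma:N_k]\le[\Gamma:H]\cdot[HN_k:N_k]$ for all $k$, contradicting Lemma~\ref{L-zero}. Writing $e_k:=[H:H\cap N_k]=[HN_k:N_k]$, the fact that $a$ has infinite order and $\bigcap_kN_k=\{1_\Gamma\}$ forces $e_k\to\infty$, while $e_k\mid e_{k+1}$ and $H\cap N_k=\langle a^{e_k}\rangle$. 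Lemma~\ref{L-zero} supplies the key quantitative input $e_k/[\Gamma:N_k]\to 0$.

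\emph{Reduction to a construction.} The next step is to build finite sets $S_1\subseteq S_2\subseteq\cdots$ with $\bigcup_nS_n=\Gamma$ such that, setting $\cT_n:=\{(S_n,N_n)\}$, each $\cT_n$ is a monotiling of $\Gamma$ (that is, $\Gamma=\bigsqcup_{c\in N_n}S_nc$), with: (a) $S_n=S_{n-1}P_n$ for some transversal $P_n$ of $N_n$ in $N_{n-1}$ with $1_\Gamma\in P_n$; (b) $S_n\cap H$ is a transversal of $H\cap N_n$ in $H$; and (c) for every finite $F\subseteq\Gamma$ and $\delta>0$, the set $S_n$ is $(F,\delta)$-invariant for all large $n$. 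Given such a family, $(\cT_n)$ has all the required properties. By (a), for $c\in N_n$ and $p\in P_n$ one has $pc\in N_{n-1}$, so $S_n=\bigsqcup_{p\in P_n}S_{n-1}p$ displays $(\cT_n)$ as tightly nested; the center sets $N_n$ are syndetic, having finite index; and the coloring $x^{(n)}\in(\{S_n,0\})^\Gamma$ attached to $\cT_n$ as in Section~3 of \cite{DowHucZha16} is fixed by the finite-index subgroup $N_n$, so its orbit closure is finite and $\cT_n$ has entropy $0$. Property ID's condition~(1) holds since any finite $F$ lies, by (c) and $\bigcup_nS_n=\Gamma$, in some $S_N=S_N\cdot 1_\Gamma$, a tile of $\cT_N$. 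For condition~(2): a tile $T=S_nc$ has $|T|=[\Gamma:N_n]$, while $|H\cap S_nc|=|Hc^{-1}\cap S_n|\le e_n$ because $Hc^{-1}$ meets exactly $e_n$ cosets of $N_n$ (using normality) and $S_n$ meets each coset once; hence $|H\cap T|/|T|\le e_n/[\Gamma:N_n]\to 0$. For condition~(3): take $S=S_n$ and $H_0=H\cap N_n=\langle a^{e_n}\rangle\subseteq N_n$; then the sets $S_nc$, $c\in H_0$, are tiles of $\cT_n$, and by (b) their union contains $(S_n\cap H)H_0=H$.

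\emph{The construction and the main obstacle.} What remains is the inductive construction of the $S_n$, which is where the work lies. Threading $H$ through is routine once the F{\o}lner condition is in hand: it is enough to require that $P_n$ contain $1_\Gamma$ together with a transversal of $H\cap N_n$ in $H\cap N_{n-1}$---a set of $e_n/e_{n-1}$ elements of $N_{n-1}$ lying in distinct $N_n$-cosets, hence extendable to a full transversal---so that (b) follows by composing transversals. The hard part is to realize $S_n$ \emph{simultaneously} as a transversal of $N_n$, as the prescribed subdivision $S_{n-1}P_n$ of $S_{n-1}$, and as F{\o}lner as one pleases, i.e.\ to produce a nested, exhausting sequence of F{\o}lner fundamental domains adapted to the chain $(N_k)$. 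For this I would pass to a cofinal subchain of $(N_k)$ along which, by the monotileability of residually finite amenable groups (Weiss), $N_n$ admits an arbitrarily invariant fundamental domain inside $N_{n-1}$, and then assemble the $S_n$ by a back-and-forth argument, keeping the fundamental domains geometrically ``ball-like'' in the manner of the arguments in \cite{Aus16} (compare Lemmas~\ref{L-connected} and~\ref{L-tree}) so that passing from $S_{n-1}$ to $S_n$ preserves invariance. Lemma~\ref{L-zero} is precisely what ensures that $N_{n-1}$ is large in the directions transverse to $H$, leaving room to fit a highly invariant $S_n$ that is short in the $H$-direction and still reaches the next portion of $\Gamma$; everything else is bookkeeping along the lines just indicated.
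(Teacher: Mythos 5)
Your setup, your reduction of property ID to the existence of a nested exhausting sequence $S_1\subseteq S_2\subseteq\cdots$ of increasingly invariant transversals with $S_n=S_{n-1}P_n$, and your derivation of conditions (1)--(3) from that data all match the paper and are correct; in particular your observation that normal-subgroup center sets force zero tiling entropy (finite orbit closure) is a clean shortcut past the paper's appeal to Lemma~3.9 of \cite{DowHucZha16}, and your coset count $|H\cap S_nc|\leq e_n$ together with Lemma~\ref{L-zero} correctly gives condition (2). The problem is the step you yourself flag as ``the hard part'': actually producing the $S_n$ is where the entire content of the proof lives, and your sketch of it does not close. Choosing $P_n$ to be a highly invariant fundamental domain for $N_n$ \emph{inside} $N_{n-1}$ does not make $S_n=S_{n-1}P_n$ a F{\o}lner set in $\Gamma$: left invariance of a product $AB$ is not implied by invariance of $B$ alone (one would at least need $P_n$ to be invariant under sets of the form $S_{n-1}^{-1}E_nS_{n-1}$ intersected with $N_{n-1}$, and then still control how the translates $S_{n-1}p$ assemble), and ``a back-and-forth argument keeping the fundamental domains ball-like'' is a description of a hope, not an argument.

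The paper resolves exactly this point with a trick your proposal is missing, and it goes in the opposite direction: rather than building $S_k$ up from an invariant piece of $N_{k-1}$, it starts from a highly invariant transversal of $N_k$ in all of $\Gamma$ and shows that the forced nested modification barely changes it. Concretely, at stage $k$ Weiss's quasitiling argument produces a finite-index normal subgroup $N_k\subseteq N_{k-1}$ together with a transversal $F$ for $N_k$ in $\Gamma$ which contains $E_k$, is $(E_k,\delta)$-invariant, and is so left invariant that the number of $c\in\Gamma$ with $S_{k-1}c$ meeting both $F$ and $\Gamma\setminus F$ is at most $\delta|F|/|S_{k-1}|$. One then sets $S_k:=S_{k-1}(F\cap N_{k-1})$. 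This is automatically a transversal for $N_k$ (a transversal of $N_{k-1}$ in $\Gamma$ times a transversal of $N_k$ in $N_{k-1}$) and automatically of the nested form $S_{k-1}P_k$; the boundary estimate shows that every $t\in F\setminus S_k$ lies in some $S_{k-1}c_0$ straddling $F$ and its complement, whence $|S_k\cap F|\geq(1-\delta)|F|$ while $|S_k|=|F|$, so $S_k$ inherits the $(E_k,\eps_k)$-invariance of $F$ by the initial choice of $\delta$. Without this (or some genuine substitute for it) the induction at the heart of your argument does not go through.
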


\begin{proof}
Let $a$ be a nonorsion element of $\Gamma$ with infinite conjugacy class,
and write $H$ for the subgroup of $\Gamma$ it generates.
Fix an increasing sequence $\{ 1_\Gamma \} = E_1 \subseteq E_2 \subseteq\dots$ of finite subsets of $\Gamma$
whose union is equal to $\Gamma$, and a decreasing sequence $1 = \eps_1 > \eps_2 > \dots$ of strictly
positive real numbers tending to zero.
We will recursively construct a tightly nested sequence of monotilings $\cT_k = \{ (S_k , N_k ) \}$
such that $N_1 \supseteq N_2 \supseteq\dots$ is a decreasing sequence of finite-index normal subgroups of $\Gamma$
with intersection $\{ 1_\Gamma \}$ and for every $k$ the shape $S_k$ is $(E_k ,\eps_k )$-invariant
and contains $E_k$, the pair $\{ (S_k \cap H, N_k \cap H ) \}$ is a monotiling of $H$,
and $|H \cap S_k c | \leq \eps_k |S_k |$ for every $c\in N_k$.
Since $N_k \cap H$ is a subgroup of $H$ for each $k$,
the unique center set of a monotiling is trivially syndetic, and the entropies
of a sequence of F{\o}lner monotilings must converge to zero (as can be seen by
a density argument and Stirling's formula---see Lemma~3.9 of \cite{DowHucZha16}),
this will establish property ID.

For the base case $k=1$ set $\cT_1 = \{ (\{ 1_\Gamma \} , \Gamma ) \}$.
Now let $k>1$ and assume that we have constructed $\cT_{k-1} = \{ (S_{k-1} , N_{k-1} ) \}$
with the desired properties.
As is readily seen, there exists a $\delta > 0$ with $\delta\leq\eps_k /2$ such that every
$(E_k ,\delta )$-invariant
nonempty finite set $F\subseteq \Gamma$ has the property that every set $F' \subseteq \Gamma$
satisfying $|F'| = |F|$ and $|F' \cap F| \geq (1-\delta )|F|$ is $(E_k , \eps_k )$-invariant.
Since $\Gamma$ is residually finite, by the quasitiling argument in Section~2 of \cite{Wei01} we can find a
finite-index normal subgroup $N_k \subseteq \Gamma$ with $N_k \subseteq N_{k-1}$ and
$N_k \cap (E_k \setminus \{ 1_\Gamma \} ) = \emptyset$ and
a transversal $F$ for $N_k$ in $\Gamma$ which contains $E_k$ and is $(E_k ,\delta )$-invariant
and is also sufficiently left invariant so that
\begin{align}\label{E-boundary}
|\{ c\in \Gamma : F\cap S_{k-1} c \neq\emptyset\text{ and } (\Gamma\setminus F)\cap S_{k-1} c \neq\emptyset \}|
\leq \delta |F|/|S_{k-1}| .
\end{align}
We may moreover assume, in view of Lemma~\ref{L-zero},
that $|HN_k /N_k |\leq \delta |\Gamma/N_k |$, which implies that
\begin{align}\label{E-shape 1}
|HN_k \cap F| \leq \delta |F| .
\end{align}

Define $S_k$ to be the set $S_{k-1} (F\cap N_{k-1} )$,
which is equal to the disjoint union
$\bigsqcup_{c\in F\cap N_{k-1}} S_{k-1} c$. Since $S_{k-1}$ is a transversal for $N_{k-1}$ in $\Gamma$
and $F\cap N_{k-1}$ is a transversal for $N_k$ in $N_{k-1}$,
it follows that $S_k$ is a transversal for $N_k$ in $\Gamma$, and so $\cT_k := \{ (S_k ,N_k ) \}$
is a monotiling of $\Gamma$.

For every element $t$ of $F$ not belonging to $S_{k-1} (F\cap N_{k-1} )$, the fact that the sets
$S_{k-1} c$ for $c\in N_{k-1}$ tile $\Gamma$ implies the existence of a $c_0 \in N_{k-1} \setminus F$
such that $t\in S_{k-1} c_0$, in which case $S_{k-1} c_0$ intersects both
$F$ and $\Gamma\setminus F$ (since $1_\Gamma \in S_{k-1}$).
In view of (\ref{E-boundary}) this implies that the set of all such $t$
has cardinality less than $\delta |F|$, so that
\begin{align}\label{E-shape 2}
|S_k \cap F|
= |S_{k-1} (F\cap N_{k-1} )\cap F|
\geq (1-\delta )|F| .
\end{align}
Since $|S_k | = |\Gamma /N_k | = |F|$ it follows by our choice of $\delta$ that
$S_k$ is $(E_k ,\eps_k )$-invariant.

Finally, observe using (\ref{E-shape 1}) and (\ref{E-shape 2}) that for all $c\in N_k$ we have
\begin{align*}
|H\cap S_k c|
\leq |HN_k \cap S_k |
\leq |HN_k \cap F| + |S_k \setminus F|
\leq 2\delta |F|
\leq \eps_k |F|
= \eps_k |S_k | .
\end{align*}
This completes the recursive construction.

It remains to observe that, since $S_1 \subseteq S_2 \subseteq\dots$ and the tiling center sets $N_k$
are normal subgroups, the sequence $(\cT_k )$ is tightly nested.
\end{proof}

We next aim to prove Theorem~\ref{T-nonamenable}.

\begin{lemma}\label{L-top free}
Let $\Gamma$ be a torsion-free countable amenable group and let $\Gamma\curvearrowright X$ be a
minimal action on a compact metrizable space with nonzero topological entropy.
Then the action is topologically free.
\end{lemma}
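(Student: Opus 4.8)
The plan is to argue by contradiction, deriving from a failure of topological freeness an invariant measure that charges a finite union of fixed-point sets of nontrivial group elements, and then to rule this out, via the variational principle, using the fact recorded earlier in this section that ergodic positive-entropy p.m.p.\ actions of torsion-free amenable groups are essentially free.

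First I would suppose the action is not topologically free. Then the set of points with trivial stabilizer fails to be dense, so there is a nonempty open set $O\subseteq X$ with $O\subseteq\bigcup_{g\in\Gamma\setminus\{1_\Gamma\}}\mathrm{Fix}(g)$, where $\mathrm{Fix}(g)=\{x\in X:gx=x\}$. Each $\mathrm{Fix}(g)$ is closed and $\Gamma$ is countable, so by the Baire category theorem, applied in the Baire space $O$, there is a $g_0\in\Gamma\setminus\{1_\Gamma\}$ for which $\mathrm{Fix}(g_0)$ has nonempty interior $U$. (Torsion-freeness is what guarantees, through this Baire step and through the cited freeness result below, that the offending element can be taken nontrivial; it enters here only via $g_0\ne 1_\Gamma$.) By minimality the translates $sU$, $s\in\Gamma$, cover $X$, so by compactness there is a finite set $F\subseteq\Gamma$ with $X=\bigcup_{s\in F}sU$; since $sU\subseteq s\,\mathrm{Fix}(g_0)=\mathrm{Fix}(sg_0s^{-1})$ and $sg_0s^{-1}\ne 1_\Gamma$ for each $s\in F$, this exhibits $X$ as a finite union of fixed-point sets of nontrivial elements of $\Gamma$.

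Now I would bring in the hypothesis $\htopol(\alpha)>0$. By the variational principle for actions of (necessarily infinite) countable amenable groups there is a $\Gamma$-invariant Borel probability measure $\mu$ on $X$ with $h_\mu(\alpha)>0$, and since $\mu\mapsto h_\mu(\alpha)$ is affine with $h_\mu(\alpha)=\int h_{\mu_\omega}(\alpha)\,d\nu(\omega)$ relative to the ergodic decomposition $\mu=\int\mu_\omega\,d\nu$, we may take $\mu$ ergodic (see \cite{KerLi16}). Since $\Gamma$ is torsion-free and amenable and $(\alpha,\mu)$ is ergodic with positive entropy, the action is essentially free by \cite{Wei03,Mey16}, so $\mu(\mathrm{Fix}(g))=0$ for every $g\in\Gamma\setminus\{1_\Gamma\}$. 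But then $1=\mu(X)\le\sum_{s\in F}\mu(sU)\le\sum_{s\in F}\mu(\mathrm{Fix}(sg_0s^{-1}))=0$, a contradiction, and hence the action is topologically free.

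The routine parts are the Baire and compactness steps and the bookkeeping with conjugate fixed-point sets; the steps requiring care are the extraction of an ergodic invariant measure of positive entropy and, above all, the invocation of the essential-freeness result for amenable acting groups that are \emph{not} assumed finitely generated, which is the fact the whole argument rests on.
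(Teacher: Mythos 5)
Your proof is correct and rests on exactly the same ingredients as the paper's, namely the variational principle together with the essential freeness (via \cite{Wei03,Mey16} and torsion-freeness) of ergodic p.m.p.\ actions with positive entropy. The paper reaches the conclusion more directly, without your Baire category and finite-covering steps: once one knows there is a single point with trivial stabilizer, its orbit is dense by minimality and consists entirely of points with trivial stabilizer, so topological freeness follows at once.
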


\begin{proof}
By the variational principle (Theorem~9.48 of \cite{KerLi16}) there is a
$\Gamma$-invariant Borel probability measure $\mu$ on $X$
such that the measure entropy of the action $\Gamma\curvearrowright (X,\mu )$
is nonzero. By \cite{Wei03,Mey16} and torsion-freeness,
there is a $\mu$-nonnull, and in particular nonempty,
set of points in $X$ whose stabilizer is trivial.
By minimality this implies that the action $\Gamma\curvearrowright X$ is topologically free.
\end{proof}

Given a group $G$ of transformations of a space $X$, the {\it rigid stablizer} $G_U$ of a
set $U\subseteq X$ is defined as the group of all elements in $G$ 
which fix every point in the complement of $U$.

\begin{theorem}\label{T-nonamenable}
Let $\Gamma$ be a torsion-free countable amenable group with property ID.
Let $\lambda\in (0,\infty )$. Then there is an expansive topologically free minimal action
$\alpha$ of $\Gamma$ on the Cantor set $X$ with topological entropy $\lambda$ such that
for every nonempty open set $U\subseteq X$ the free product $\Z_2 * \Z_2 * \Z_2$
embeds into the rigid stabilizer $\fA (\alpha )_U$.
\end{theorem}

\begin{proof}
Fix an infinite cyclic subgroup $H\subseteq \Gamma$ and a tightly nested sequence $(\cT_n )$ of F{\o}lner
tilings for $\Gamma$ as in the definition of Property ID.

Let $q$ be an integer greater than both $3$ and $e^{2\lambda}$.
For a set $L\subseteq \Gamma$ we will write $\pi _L$ for the coordinate projection map
$x\mapsto x|_L$ from $\{ 1,\dots , q\}^\Gamma$ to $\{ 1,\dots , q\}^L$.
By a {\it box} in $\{ 1,\dots ,q \}^\Gamma$ we mean a subset of $\{ 1,\dots ,q \}^\Gamma$
of the form $\prod_{s\in \Gamma} A_s$ where $A_s \subseteq \{ 1,\dots , q \}$ for each $s\in \Gamma$.
For every box $A = \prod_{t\in \Gamma} A_t \subseteq\{ 1,\dots ,q \}^\Gamma$ and $T\subseteq \Gamma$ write
\begin{align*}
D_T (A) = \{ t\in T : A_t = \{ 1,\dots ,q \} \} .
\end{align*}

Set $\theta = \lambda / \log q$, and note that $\theta < 1/2$ by our choice of $q$.
We will recursively construct an
increasing sequence $1 = n_0 < n_1 < n_2 <\dots$ of integers and a decreasing
sequence $A_0 = \{ 1,\dots ,q \}^\Gamma \supseteq
A_1 = \prod_{t\in \Gamma} A_{1,t} \supseteq A_2 = \prod_{t\in \Gamma} A_{2,t} \supseteq \dots$
of boxes in $\{ 1,\dots ,q \}^\Gamma$ such that, writing $F_k$ for the tile of $\cT_{n_k}$
containing $1_\Gamma$, we have the following for every $k \geq 1$:
\begin{enumerate}
\item $\theta + 1/2^{k+1} < |D_T (A_k)|/|T| < \theta + 1/2^k$
for every tile $T$ of $\cT_{n_k}$,

\item $|T|/2^{k+4} > 1$ for every tile $T$ of $\cT_{n_k}$,

\item $H \subseteq D_\Gamma (A_k)$,

\item for every $(S,C) \in \cT_{n_k}$ one has $A_{k,sc} = A_{k,sd}$ for all $s\in S$ and $c,d\in C$,

\item $\pi_{F_{k-1}} (A_k ) = \pi_{F_{k-1}} (A_{k-1} )$,
\end{enumerate}

As indicated we take $n_0 =1$ and $A_0 = \{ 1,\dots ,q \}^\Gamma$.
Suppose now that $k\geq 1$ and that we have defined $n_{k-1}$ and $A_{k-1}$.

Let $\{ S_j \}_{j\in J}$ be the collection of shapes of the tiling $\cT_{n_{k-1}}$ and for each $j$
choose a tiling center $c_j$ for $S_j$ and set $W_j = \prod_{s\in S_j} A_{k-1,sc_j}$
(this is independent of the choice of the $c_j$ since (iv) holds for $k-1$).

Take an integer $n_k > n_{k-1}$ which is large enough for purposes to be specified.
Pick a set $\cR$ of representatives among the tiles of $\cT_{n_k}$
for the relation of having the same shape, and include among these representatives
the tile $F_k$ which contains $1_\Gamma$.

Let $T\in\cR$. Then we have $T = \bigsqcup_{i\in I} T_i$
for some tiles $T_i$ from $\cT_{n_{k-1}}$. By taking $n_k$ large enough (independently
of the particular $T\in\cR$ at hand, which we can do because $\cR$ is finite) we may ensure
that $|T| /2^{k+4} > 1$ and that
$I$ can be partitioned as $I' \sqcup I'' \sqcup \bigsqcup_{j\in J} I_j$ where
\begin{enumerate}
\item[(a)] the tiles $T_i$ for $i\in I'$ cover a large enough proportion of $T$ so that
\begin{align*}
\sum_{i\in I'} |T_i| > \frac{\theta + 1/2^{k+1}}{\theta + 5/2^{k+3}}|T| ,
\end{align*}

\item[(b)] $I''$ is equal to the set of all $i\in I$ such that either $T_i = F_{k-1}$
or $T_i \cap H \neq \emptyset$ and the tiles $T_i$ for $i\in I''$ cover a small enough proportion of $T$
(as is possible by condition (ii) in the definition of property ID) so that
\begin{align*}
\sum_{i\in I''} |T_i| < \frac{1}{2^{k+3}}|T|
\end{align*}
and also so that we can arrange (c) below,

\item[(c)] for each $j\in J$ the tiles $T_i$ for $i\in I_j$ all have shape $S_j$
and there exists a surjection $\varphi_j : I_j \to W_j$ such that for every $i\in I_j$
we have $\varphi_j (i)_s \in A_{k-1,sc}$ for all $s\in S_j$
where $c$ is the tiling center for $T_i$
(this is possible view of (iv) for $k-1$, since the tiling center sets for $\cT_{n_{k-1}}$ are syndetic
and so $T$ can be chosen to be sufficiently left invariant so that its
intersection with each of these tiling center sets has sufficiently large cardinality both
to enable the existence of such $\varphi_j$ and to allow for (a), with the
tiles $T_i$ for $i\in I''$ being assumed at the same time to cover a small enough proportion of $T$).
\end{enumerate}
Using this partition of $I$ we define the sets $A_{k,t}$ for $t\in T$ in three stages:
\begin{enumerate}[label=\roman*]
\item[(d)] For $i\in I''$ and $t\in T_i$ define $A_{k,t} = A_{k-1,t}$.

\item[(e)] For each $j\in J$ and $i\in I_j$
define $A_{k,sc} = \varphi_j (i)_s$ for all $s\in S_j$ where $c$ is the tiling center for $T_i$.

\item[(f)] For each $i\in I'$ choose a set $T_i' \subseteq D_{T_i} (A_{k-1} )$ with
\begin{align*}
(\theta + 5/2^{k+3}) |T_i | < |T_i' | < (\theta + 6/2^{k+3})|T_i| ,
\end{align*}
which we can do since $|T_i|/2^{k+3} > 1$
and $|D_{T_i} (A_{k-1} )| > (\theta + 1/2^k )|T_i|$ by hypothesis
(including the case $k=1$ since $\theta < 1/2$).
We define $A_{k,t} = A_{k-1,t}$ for all $t\in T_i'$,
while for $t\in T_i \setminus T_i'$ we define $A_{k,t}$ to be any singleton contained in $A_{k-1,t}$.
\end{enumerate}
Note that by (b) and (f) we have
\begin{align}\label{E-density 1}
D_T (A_k )
< (\theta + 6/2^{k+3} ) \sum_{i\in I'} |T_i | + \sum_{i\in I''} |T_i |
< (\theta + 1/2^k ) |T|
\end{align}
while by (a) and (f) we have
\begin{align}\label{E-density 2}
D_T (A_k )
> ( \theta + 5/2^{k+3} ) \sum_{i\in I'} |T_i |
> ( \theta + 1/2^{k+1} ) |T| .
\end{align}

Having thereby defined $A_{k,t}$ for all $t$ in each tile in our collection $\cR$
of shape representatives, we can now extend the definition of $A_{k,t}$ to all $t\in \Gamma$
in the unique way that enables us to satisfy (iv). Since the sequence of tilings $(\cT_n )$ is tightly nested
and (iv) holds for $k-1$, we will have the inclusion
$A_{k,t} \subseteq A_{k-1,t}$ for every $t\in \Gamma$. Condition (i) for $k$ is verified by (\ref{E-density 1}) and (\ref{E-density 2}),
while conditions (ii) and (v) are built into the construction.
Since condition (iii) holds for $k-1$,
it is guaranteed to hold for $k$ by our construction.

Set $A = \bigcap_{k=1}^\infty A_k$, which is a decreasing intersection of nonempty closed sets
and hence is itself nonempty and closed. Consider the right shift action
$\Gamma\curvearrowright \{ 1,\dots ,q \}^\Gamma$ and set $X = \overline{\Gamma A}$, which is closed and
$\Gamma$-invariant. Let $\Gamma\stackrel{\alpha}{\curvearrowright} X$ be the subshift action.
It remains to check that $\alpha$ is topologically free and minimal, that
its topological entropy is equal to $\lambda$, and that $\Z_2 * \Z_2 * \Z_2$
embeds into $\fA (\alpha )_U$ for every nonempty open set $U\subseteq X$.
Note that minimality, topological freeness, and the infiniteness of $\Gamma$
together imply that the space
$X$ has no isolated points and hence, being a compact subset of a metrizable zero-dimensional space, is the Cantor set.

For minimality, let $x,y\in A$. Let $F$ be a nonempty finite subset of $\Gamma$.
Given that the sequence $(\cT_n )$ came from the definition of property ID, we can
find a $k$ such that $F$ is contained in a tile $T$ of $\cT_{n_k}$.
Let $(S,C)\in\cT_{n_{k+1}}$.
By the construction of the set $A_{k+1}$, for every $c \in C$ there is a $\tilde{c} \in \Gamma$
such that $T\tilde{c} \subseteq Sc$ and $y_{t\tilde{c}} = x_t$ for all $t\in T$,
in which case $\tilde{c} y|_F = x|_F$.
Also, since $C$ is syndetic by the definition of property ID,
there is a finite set $E\subseteq \Gamma$ such that $EC = \Gamma$, in which case the set
$\tilde{C} = \{ \tilde{c} : c\in C \}$
satisfies $ES^{-1} T \tilde{C} = \Gamma$ and hence is syndetic.
From these observations we deduce that $x\in\overline{\Gamma y}$ and,
by taking $y=x$ and applying
a standard characterization of minimality (Proposition~7.13 of \cite{KerLi16}), that
the action $\Gamma\curvearrowright \overline{\Gamma x}$ is minimal.
On the other hand, reversing the roles of $x$ and $y$ we get $y\in\overline{\Gamma x}$ whence
$\overline{\Gamma A} = \overline{\Gamma x}$ by the minimality of $\Gamma\curvearrowright \overline{\Gamma x}$,
so that $\alpha$ is minimal.

For the entropy calculation, an inductive application of (v) shows that
for every $k\in\N$ we have $\pi_{F_k} (A_j ) = \pi_{F_k} (A_k )$ for all $j\geq k$
and hence $\pi_{F_k} (A) = \pi_{F_k} (A_k )$, so that, by (i),
\begin{align*}
\frac{1}{|F_k|} \log |\pi_{F_k} (A)|
= \frac{|D_{F_k} (A_k )|}{|F_k|}\log q
&\geq (\theta + 1/2^{k+1} )\log q > \lambda .
\end{align*}
Since $(F_k )_k$ is a F{\o}lner sequence for $\Gamma$, it follows that
$\htopol (\Gamma,X) \geq \lambda$.

Let $\delta > 0$. Take a large enough $k\in\N$ so that
the tiling $\cT_{n_k}$ has entropy less than $\delta$, which we can do by the definition of property ID.
Writing $S$ for the union of the shapes of the tiling $\cT_{n_k}$, any sufficiently left invariant
nonempty finite set $F\subseteq \Gamma$ will be such that the set of restrictions to
$S^{-1} F$ of elements in the right shift action associated to $\cT_{n_k}$
as in the definition of tiling entropy has cardinality at most $e^{\delta |F|}$.
Take a set $R$ of representatives in $\overline{\Gamma A_k}$ for the relation of
having the same restriction to $F$. We may assume that $R\subseteq \Gamma A_k$ via perturbation.
In view of (i) we then have
\begin{align*}
\log |\pi_F (R)|
\leq \log (e^{\delta |F|} q^{(\theta + 1/2^k )|F|})
= \delta |F| + (\theta + 1/2^k )(\log q)|F| .
\end{align*}
Taking a F{\o}lner sequence of such $F$, we deduce that the topological entropy
of the action $\Gamma\curvearrowright\overline{\Gamma A_k}$ is less than
$\delta + (\theta + 1/2^k )\log q$.
Since $X\subseteq \overline{\Gamma A_k}$ it follows by the monotonicity of entropy
that $\htopol (\Gamma,X) \leq \delta + (\theta + 1/2^k )\log q$.
Since we can take $\delta$ arbitrarily small and $k$ arbitrarily large,
we infer that $\htopol (\Gamma,X) \leq \theta\log q = \lambda$
and hence that $\htopol (\Gamma,X) = \lambda$, as desired.

As we now know the topological entropy to be nonzero, we can deduce topological freeness from Lemma~\ref{L-top free}
using the fact that the action is minimal and $\Gamma$ is torsion-free.

Finally we verify that, given a nonempty open set $U\subseteq X$,
the free product $\Z_2 * \Z_2 * \Z_2$ embeds into the rigid stabilizer $\fA (\alpha )_U$.
Since $\Gamma A$ is dense in $X$ there is an $s\in\Gamma$ such that
$A\cap sU \neq\emptyset$, and as $\fA (\alpha )_{sU} = s\fA (\alpha )_U s^{-1}$ we may thus assume that $A\cap U \neq\emptyset$.

Fix a $z\in A\cap U$. Then there is a finite set $F\subseteq\Gamma$ such that the clopen set $\{ x\in X : x|_F = z|_F \}$
is contained in $U$. Fix a generator $a$ of the group $H$. By the definition of property ID, there exist $m,k\in\N$
and a shape $S$ of $\cT_{n_k}$ containing $F\cup \{ 1_\Gamma \}$ such that
the sets $Sa^{jm}$ for $j\in\Z$ are tiles of $\cT_{n_k}$ whose union contains $H$.
Let $l\in \{ 1,2,3 \}$. Write $V_l$ for the set of all $x\in X$ such that
\begin{itemize}
\item $x_{a^{-m}} = l$,

\item $x_{a^m}$, $x_{a^{3m}}$, and $x_{a^{5m}}$ are distinct elements of $\{1,2,3,4 \} \setminus \{ l \}$,

\item $x_{sa^{2jm}} = z_s$ for all $s\in S$ and $j = 0,1,2,3$.
\end{itemize}
Then $V_l$, $a^{2m} V_l$, $a^{4m} V_l$, and $a^{6m} V_l$ are pairwise disjoint clopen subsets of $X$ which are contained in $U$
and so we can define an element $g_l$ of $\fA (\alpha )_U$ by setting
\begin{itemize}
\item $g_l x = a^{6m} x$ for all $x\in V_l$,

\item $g_l x = a^{-6m} x$ for all $x\in a^{6m} V_l$,

\item $g_l x = a^{2m} x$ for all $x\in a^{2m} V_l$,

\item $g_l x = a^{-2m} x$ for all $x\in a^{4m} V_l$,

\item $g_l x = x$ for all $x\in X\setminus (V_l \sqcup a^{2m} V_l \sqcup a^{4m} V_l \sqcup a^{6m} V_l )$.
\end{itemize}
Each $g_l$ for $l=1,2,3$ generates a copy of $\Z_2$ inside of $\fA (\alpha )$
(each is the image of an even permutation under an embedding of the ordinary alternating group $\fA_4$ into
the dynamical alternating group $\fA_4 (\alpha )$
of the kind that appears in the definition of the latter, and $\fA_4 (\alpha )$ is contained in $\fA (\alpha )$
according to the comment after Definition~\ref{D-alternating}).
We will check that these three elements together generate a copy of $\Z_2 * \Z_2 * \Z_2$.

Let $r\in\N$ and let $(l_1 , \dots , l_r )$ be a tuple in $\{ 1,2,3 \}^r$ such that
$l_{j+1} \neq l_j$ for each $j=1,\dots ,r-1$. As $z\in A$ and each $A_k$ is a box,
by (iii), (iv), and (v) we can find an $x\in X$ such that
\begin{itemize}
\item $x_{a^{(6j-1)m}} = l_{j+1}$ for every $j=0,\dots , r-1$,

\item $x_{a^{(6j+1)m}}$, $x_{a^{(6j+3)m}}$, and $x_{a^{(6j+5)m}}$ are distinct elements of $\{ 1,2,3,4 \} \setminus \{ l_{j+1} \}$
for every $j=0,\dots , r-1$,

\item $x_{sa^{6jm}} = z_s$ for every $s\in S$ and $j=0,\dots , r$,

\item $x_{a^{(6r-1)m}} \neq x_{a^{-m}}$.
\end{itemize}
Then for each $j=0,\dots ,r-1$ we have $g_{l_j} g_{l_{j-1}} \cdots g_{l_1} x \in V_{l_{j+1}}$ and
hence $g_{l_r} g_{l_{r-1}} \cdots g_{l_1} x = a^{6rm} x \neq x$, so that
$g_{l_r} g_{l_{r-1}} \cdots g_{l_1}$ is not the identity element in $\fA (\alpha )$.
We conclude that the subgroup generated by $g_1$, $g_2$, and $g_3$
is isomorphic to $\Z_2 * \Z_2 * \Z_2$.
\end{proof}

\begin{remark}
If in the conclusion of the above theorem we drop the part about the rigid stabilizers
then we do not need to assume that $\Gamma$ has property ID.
That is, for every torsion-free countable amenable group $\Gamma$ and $\lambda\in (0,\infty )$
there exists an expansive topologically free minimal action
$\alpha$ of $\Gamma$ on the Cantor set with topological entropy $\lambda$.
\end{remark}

If a countable group $G$ of homeomorphisms of a Hausdorff space $X$ has the property
that for every nonempty open set $U\subseteq X$ the rigid stabilizer $G_U$ is nonamenable,
then every nontrivial uniformly recurrent subgroup of $G$ is nonamenable \cite{LeBMat18},
which implies by \cite{Ken15} that $G$ is C$^*$-simple (Corollary~1.3 of \cite{LeBMat18}).
Given that the free product $\Z_2 * \Z_2 * \Z_2$ contains a free subgroup on two generators 
(take for instance the product of the generators of the first two copies of $\Z_2$ and the 
product of the generators of the second two copies) we conclude that 
the alternating groups in Theorem~\ref{T-nonamenable} are all C$^*$-simple.

By Theorem~\ref{T-alt coe}
the dynamical alternating group is a complete invariant for continuous orbit equivalence
among topologically free minimal actions.
Since a group with property ID cannot be virtually cyclic,
Theorems~\ref{T-nonamenable} and \ref{T-coe} and the conclusion of the
previous paragraph then combine to yield the following.
All of the alternating groups below are simple by Theorem~\ref{T-simple fg}(i),
have property Gamma by Theorem~\ref{T-Gamma}, and are finitely generated
by Theorem~\ref{T-simple fg}(ii) (since $\Gamma$ is assumed to be finitely generated).

\begin{theorem}\label{T-ent nonamenable}
Suppose that $\Gamma$ is a finitely generated torsion-free amenable group with property ID.
Then there is an uncountable family
of topologically free expansive minimal actions $\alpha$ of $\Gamma$ on the Cantor set
such that $\fA (\alpha )$ is C$^*$-simple
and such that the groups $\fA (\alpha )$ for different $\alpha$ are pairwise nonisomorphic.
\end{theorem}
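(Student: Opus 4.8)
The plan is to apply Theorem~\ref{T-nonamenable} to produce, for an uncountable set of entropy values $\lambda\in(0,\infty)$, a topologically free expansive minimal action $\alpha_\lambda$ of $\Gamma$ on the Cantor set with $\htopol(\alpha_\lambda)=\lambda$ and $\fA(\alpha_\lambda)$ C$^*$-simple, and then to argue that distinct entropy values force nonisomorphic alternating groups. Concretely, I would fix an uncountable subset $\Lambda\subseteq(0,\infty)$ (any uncountable set will do) and for each $\lambda\in\Lambda$ invoke Theorem~\ref{T-nonamenable} to obtain such an action $\alpha_\lambda$. The family $\{\alpha_\lambda:\lambda\in\Lambda\}$ is then the desired uncountable family, and C$^*$-simplicity of each $\fA(\alpha_\lambda)$ is part of the conclusion of Theorem~\ref{T-nonamenable}. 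It remains only to verify pairwise nonisomorphism.

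For pairwise nonisomorphism, suppose $\lambda\neq\lambda'$ in $\Lambda$ and that $\fA(\alpha_\lambda)\cong\fA(\alpha_{\lambda'})$ as abstract groups. Since both $\alpha_\lambda$ and $\alpha_{\lambda'}$ are minimal actions of countable groups on compact metrizable spaces, the implication (iii)$\Rightarrow$(v) of Theorem~\ref{T-alt coe} shows that $\alpha_\lambda$ and $\alpha_{\lambda'}$ are continuously orbit equivalent. Both are topologically free actions of $\Gamma$, and $\Gamma$ is countably infinite, amenable, and torsion-free. A torsion-free group that is locally virtually cyclic is locally infinite cyclic, hence torsion-free abelian, hence abelian; but $\Gamma$ has property ID, which by its very definition provides an infinite cyclic subgroup $H$ of infinite index, so $\Gamma$ is not cyclic, and in fact not virtually cyclic, and the same reasoning applied to finitely generated subgroups shows $\Gamma$ is not locally virtually cyclic (any finitely generated infinite subgroup containing an appropriate tile from some $\cT_n$ is not virtually cyclic by the nesting and Følner conditions; alternatively, a locally virtually cyclic torsion-free group is abelian and thus cannot contain an infinite cyclic subgroup of infinite index). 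Therefore Theorem~\ref{T-coe} applies to the continuous orbit equivalence between $\alpha_\lambda$ and $\alpha_{\lambda'}$, yielding $\lambda=\htopol(\alpha_\lambda)=\htopol(\alpha_{\lambda'})=\lambda'$, a contradiction. Hence the groups $\fA(\alpha_\lambda)$ are pairwise nonisomorphic.

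The concluding remarks about simplicity, property Gamma, and finite generation are immediate: simplicity of each $\fA(\alpha_\lambda)$ follows from minimality via Theorem~\ref{T-simple fg}(i); property Gamma follows from Theorem~\ref{T-Gamma} since $\Gamma$ is countably infinite and amenable and $\alpha_\lambda$ is topologically free (note the ICC hypothesis needed for the statement to be meaningful holds by Proposition~\ref{P-ICC}, as topological freeness together with minimality on an infinite space guarantees a dense set of points with orbit of size at least four); and when $\Gamma$ is finitely generated, finite generation of $\fA(\alpha_\lambda)$ follows from Theorem~\ref{T-simple fg}(ii) since $\alpha_\lambda$ is expansive and minimal on an infinite space, so all orbits are infinite and in particular of cardinality at least $5$.

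The main obstacle is ensuring that $\Gamma$ genuinely fails to be locally virtually cyclic, so that Theorem~\ref{T-coe} is applicable; this is where property ID is essential, as it furnishes the infinite-index infinite cyclic subgroup that rules out the virtually cyclic case after passing through the torsion-free reduction. Everything else is a bookkeeping assembly of Theorems~\ref{T-nonamenable}, \ref{T-coe}, \ref{T-alt coe}, \ref{T-simple fg}, and \ref{T-Gamma}.
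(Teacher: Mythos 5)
Your overall assembly is exactly the paper's: Theorem~\ref{T-nonamenable} produces actions with prescribed entropy $\lambda$ for uncountably many values of $\lambda$, Theorem~\ref{T-alt coe} upgrades an isomorphism of alternating groups to a continuous orbit equivalence, and Theorem~\ref{T-coe} then forces the entropies to agree. The only substantive hypothesis you must verify is the one Theorem~\ref{T-coe} needs, namely that $\Gamma$ is not \emph{locally} virtually cyclic, and this is where your argument breaks down. Your ``alternatively'' clause --- that a locally virtually cyclic torsion-free group is abelian and \emph{thus} cannot contain an infinite cyclic subgroup of infinite index --- is false: $\Z^2$ is abelian and contains $\Z\times\{0\}$ with infinite index, and even within the locally cyclic world $\Z[1/2]$ contains $\Z$ with infinite index. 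Your other justification (that a finitely generated subgroup containing a tile of some $\cT_n$ is not virtually cyclic ``by the nesting and F{\o}lner conditions'') is not an argument: containing a prescribed finite set places no restriction on whether a finitely generated group is virtually cyclic, and in a locally cyclic group such as $\Z[1/2]$ \emph{every} finitely generated subgroup is cyclic. What you have correctly established is only that $\Gamma$ itself is not virtually cyclic, which is strictly weaker than what Theorem~\ref{T-coe} requires.

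The route the paper gestures at with ``a torsion-free group which is locally virtually cyclic must be Abelian'' is: such a group is locally cyclic, hence embeds in $\Qb$, and one must then rule out the possibility that a subgroup of $\Qb$ carries the structure demanded by property ID. This does not follow from the mere existence of an infinite-index infinite cyclic subgroup (non-finitely-generated subgroups of $\Qb$, such as $\Z[1/2]\supseteq\Z$, have those), so the actual content has to come from the tiling axioms --- for instance by playing condition (iii), which forces a shape $S$ of $\cT_n$ to contain a full transversal for $H_0$ in $H$ and hence to satisfy $|H\cap S|=[H:H_0]$, against condition (ii) and the F{\o}lner and entropy constraints on the shapes. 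As written, your proof disposes of the locally virtually cyclic case by a false implication, so this step is a genuine gap; the remainder of the argument (the choice of an uncountable set of entropies, the use of Theorems~\ref{T-alt coe} and \ref{T-coe}, and the concluding remarks on simplicity, property Gamma, and finite generation) is correct and matches the paper.
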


In view of Theorem~\ref{T-res finite} and Example~\ref{E-product} we obtain the following corollary.

\begin{corollary}
Suppose that $\Gamma$ is a finitely generated amenable group which is either (i) torsion-free, ICC, and residually finite
or (ii) of the form $\Gamma_0 \times \Z$ where $\Gamma_0$ is nontrivial and torsion-free.
Then there is an uncountable family
of topologically free expansive minimal actions $\alpha$ of $\Gamma$ on the Cantor set
such that $\fA (\alpha )$ is C$^*$-simple
and such that the groups $\fA (\alpha )$ for different $\alpha$ are pairwise nonisomorphic.
\end{corollary}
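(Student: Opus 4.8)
The plan is to reduce both cases to Theorem~\ref{T-ent nonamenable}, whose hypotheses on $\Gamma$ are merely that it be torsion-free and have property ID (the standing amenability assumption of this section being in force throughout). So the entire task is to check, for a group $\Gamma$ of type (i) or (ii), that it is torsion-free and satisfies property ID; once this is established, Theorem~\ref{T-ent nonamenable} produces the desired uncountable family of topologically free expansive minimal actions on the Cantor set with C$^*$-simple, pairwise nonisomorphic alternating groups without further work.

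For case (i), torsion-freeness is assumed, so it remains to verify property ID, and I would do this by invoking Theorem~\ref{T-res finite}, which applies to (amenable) residually finite groups containing a nontorsion element with infinite conjugacy class. Since $\Gamma$ is ICC it is nontrivial, and any nontrivial $a\in\Gamma$ has infinite conjugacy class by the ICC condition and is nontorsion because $\Gamma$ is torsion-free; hence such an element exists, Theorem~\ref{T-res finite} gives property ID, and Theorem~\ref{T-ent nonamenable} then applies.

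For case (ii), I would first note that $\Gamma=\Gamma_0\times\Z$ is torsion-free, being a direct product of torsion-free groups. Since $\Gamma_0$ is nontrivial and torsion-free it is infinite, so $\Gamma$ is of the form $\Gamma_0\times\Z$ with $\Gamma_0$ infinite, and Example~\ref{E-product} supplies property ID directly. Applying Theorem~\ref{T-ent nonamenable} again finishes this case.

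The main point needing attention—more a matter of bookkeeping than a genuine obstacle—is simply to confirm that the hypotheses align: in (i), that ICC together with torsion-freeness furnishes the nontorsion element with infinite conjugacy class required by Theorem~\ref{T-res finite}, and in (ii), that ``nontrivial and torsion-free'' forces $\Gamma_0$ to be infinite so that Example~\ref{E-product} is applicable. Both observations are immediate, so the corollary amounts to a repackaging of Theorems~\ref{T-res finite} and \ref{T-ent nonamenable} together with Example~\ref{E-product}.
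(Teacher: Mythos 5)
Your proof is correct and is essentially the paper's own argument: the corollary is stated immediately after the sentence ``In view of Theorem~\ref{T-res finite} and Example~\ref{E-product} we obtain the following corollary,'' and the two bookkeeping checks you supply (ICC plus torsion-freeness yields a nontorsion element with infinite conjugacy class in case (i); nontrivial plus torsion-free forces $\Gamma_0$ infinite in case (ii)) are exactly what is needed to feed Theorem~\ref{T-ent nonamenable}.
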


Examples of finitely generated amenable groups which are torsion-free, ICC, and residually finite are the wreath product
$\Z\wr\Z$ and any finitely generated torsion-free weakly branch group, such as the basilica group.

\end{document}